\newtheorem{theorem}{Theorem}[section]
\newtheorem{proposition}[theorem]{Proposition}
\newtheorem{lemma}[theorem]{Lemma}
\newtheorem{corollary}[theorem]{Corollary}
\theoremstyle{remark}
\newtheorem{remark}[theorem]{Remark}
\newtheorem{definition}{Definition}
\numberwithin{equation}{section}
\newcommand{\vep}{\varepsilon}
\newcommand{\R}{{\mathbb{R}}}
\newcommand{\Q}{{\mathbb{Q}}}
\newcommand{\C}{{\mathbb{C}}}
\newcommand{\Z}{{\mathbb{Z}}}
\newcommand{\N}{{\mathbb{N}}}
\newcommand{\sgn}{\operatorname{sgn}}
\newcommand{\erf}{\operatorname{erf}}
\newcommand{\erfc}{\operatorname{erfc}}
\newcommand{\rp}{\operatorname{Re}}
\newcommand{\ip}{\operatorname{Im}}
\begin{document}

\title[On resonant energy sets]{On resonant energy sets for Hamiltonian systems with reflections}
\author[Krzysztof Fr\k{a}czek]{Krzysztof Fr\k{a}czek}
\address{Faculty of Mathematics and Computer Science, Nicolaus
Copernicus University, ul. Chopina 12/18, 87-100 Toru\'n, Poland}
\email{fraczek@mat.umk.pl}

\date{\today}

\subjclass[2000]{37E35, 37C83, 37J12, 34L25, 34C15, 70K28, 70K30}
\keywords{Hamiltonian systems with reflections, resonances, billiards, translation surfaces}
\maketitle
%

\begin{abstract}
We study two uncoupled oscillators, one horizontal and one vertical, moving in a rectilinear polygon (with only vertical and horizontal sides) and undergoing elastic reflections at its boundary.
The main purpose of the article is to analyze the occurrence of resonance in such systems, depending on the shape of the analytic potentials that determine the oscillators.
We define resonant energy levels; roughly speaking, these are levels for which the resonance phenomenon occurs for a large set of values of the parameter.
We focus on unimodal analytic potentials whose unique minimum is at zero.
The most important result of the work describes the size of the set of resonance levels in the form of the following trichotomy: it is either empty, a singleton, or large, namely non-empty and open.
{In the latter case, we show that an abundance of resonant orbits occurs only when the potentials are of a special type; we denote this family by $\mathcal{SP}$. This result can be regarded as a distant analogue of the classical Bertrand's theorem (1873), which characterizes centrally symmetric potentials in the presence of an abundance of periodic orbits.}

\end{abstract}

\section{Introduction}
Let us consider the motion of a single point mass in the plane $\R^2$. Suppose that if $(q_1,q_2)$ are the position coordinates and $(p_1,p_2)$ are the corresponding  momenta, then  $p_1^2/2+p_2^2/2$ is the kinetic energy of the particle and $V(q_1,q_2)$ is its potential energy. We only deal with potentials
of the form $V(q_1,q_2)=V_1(q_1)+V_2(q_2)$, where $V_1,V_2:\R\to\R_{\geq 0}$ are unimodal analytic potentials that increase away from zero and tend to infinity as the argument tends to plus or minus infinity. 
Then
\begin{equation}\label{eq:Hamfun}
H(p_1,p_2,q_1,q_2)=\frac{p_1^2}{2}+\frac{p_2^2}{2}+V_1(q_1)+V_2(q_2)
\end{equation}
is the Hamiltonian associated with our problem, and the corresponding Hamiltonian equation is of the form
\begin{equation}\label{eq:Ham2}
\frac{dp_1}{dt}=-V_1'(q_1), \quad\frac{dq_1}{dt}=p_1,\quad \frac{dp_2}{dt}=-V_2'(q_2), \quad\frac{dq_2}{dt}=p_2.
\end{equation}
In addition to the usual rules describing the behavior of a particle given by \eqref{eq:Ham2}, we assume that they move in a rectilinear polygon.  Denote by $\mathcal{RP}$ the family of rectilinear polygons; this is the family of bounded polygons (not necessarily connected or simply connected) whose boundary consists of finitely many bounded vertical and horizontal segments.
A rectilinear polygon has corners of two types: corners in which the smaller angle ($90^\circ$) is interior to the polygon are called convex, and corners in which the larger angle ($270^\circ$) is interior are called concave.
We also assume that the particle undergoes an elastic reflection upon meeting a wall. More precisely, if a trajectory meets a vertical segment at $(p_1,p_2,q_1,q_2)$, then it jumps to $(-p_1,p_2,q_1,q_2)$ and continues to evolve according to \eqref{eq:Ham2}. If a trajectory meets a horizontal segment at $(p_1,p_2,q_1,q_2)$, then it jumps to $(p_1,-p_2,q_1,q_2)$. Moreover, if a trajectory meets a {convex} corner at $(p_1,p_2,q_1,q_2)$, then it jumps to $(-p_1,-p_2,q_1,q_2)$.
This classical reflection law does not apply at a concave corner.
In that case, the problem of reflection should be considered in the framework of the geometrical theory of diffraction proposed by Keller in \cite{Kel}. When an incident ray meets a concave corner, it undergoes diffraction, which leads to the emergence of four directions: $(p_1,p_2)$, $(-p_1,p_2)$, $(p_1,-p_2)$, and $(-p_1,-p_2)$.
These directions correspond to singularities of the diffraction coefficient
and indicate the preferred directions of energy propagation. Since exactly
one of these directions points outside the polygon,
 we will ignore it and regard the remaining three as the natural (in the diffractive sense) continuation of the dynamics.

Denote by $(\varphi_t)_{t\in\R}=(\varphi^P_t)_{t\in\R}$ the Hamiltonian flow describing the behavior of a particle
in the polygon $P\in\mathcal{RP}$ when the Hamiltonian is given by \eqref{eq:Hamfun}. Each periodic orbit of the flow $(\varphi^P_t)_{t\in\R}$ {(including those propagating between corners)} is called a \emph{resonant orbit}.
If a resonant orbit avoids the corners of the polygon, then it is \emph{regular}; otherwise, it is \emph{singular}.
{ Regular periodic orbits give rise to resonance in the classical sense, whereas singular periodic orbits generate resonance in the sense of diffraction theory. In this case, the wave front travels back and forth between the corners. A modern survey of resonances for waves reflected from corners and of the role of diffractive geodesics  can be found in \cite{Hi-Wu}.}


For a given energy level $E\geq 0$ and any $0\leq \theta\leq E$ let
\[S_{E,\theta}:=\Big\{(p_1,p_2,q_1,q_2)\in\R^4:\frac{p_1^2}{2}+V_1(q_1)=\theta,\frac{p_2^2}{2}+V_2(q_2)=E-\theta,(q_1,q_2)\in P\Big\}.\]
Then the phase space of the flow $(\varphi_t)_{t\in\R}$, i.e., $S=\R^2\times P$ is foliated by invariant sets $\{S_{E,\theta}:E\geq 0,0\leq \theta\leq E\}$.

If the restriction of $(\varphi_t)_{t\in\R}$ to $S_{E,\theta}$ has a resonant orbit, then the pair $(E,\theta)$ is called resonant.
As we will see in Section~\ref{sec:genequ} (see Remark~\ref{rem:cyl}), if $(E,\theta)$ is  resonant and $S_{E,\theta}$ has a regular resonant orbit, then $S_{E,\theta}$ contains an open cylinder of regular periodic orbits surrounding the resonant orbit. Moreover, the boundary of the cylinder consists of a chain of singular resonant orbits. Conversely, if $S_{E,\theta}$ has a singular resonant orbit connecting { only} convex corners, it is surrounded by a cylinder of regular periodic orbits in $S_{E,\theta}$. For other types of singular resonant orbits, this phenomenon may not occur, {but in that case an isolated diffraction resonance may arise}.

\begin{definition}
We say that $E>0$ is a \emph{resonant energy level} if $(E,\theta)$ is a resonant pair  for uncountably many $\theta\in (0,E)$. Denote by $\mathcal{E}=\mathcal{E}(P,V_1,V_2)\subset (0,+\infty)$ the set of resonant energy levels.
\end{definition}

\subsection{Main results}\label{sec:mainresults}
From now on, we will deal only with analytic potentials $V:\R\to\R_{\geq 0}$ such that
\begin{gather}\label{eq:i}
V(0)=0,\quad y\cdot V'(y)>0\text{ for }y\neq 0\quad\text{and}\quad \lim_{y\to\pm\infty}V(y)=+\infty.
\end{gather}
We denote  the family of such potentials by $\mathcal{UM}$. For any $V\in\mathcal{UM}$,
let $m=\deg(V,0)\geq 2$ be the degree of the holomorphic extension of $V$ at $0$, i.e.,
\[V^{(m)}(0)\neq 0\text{ and }V'(0)=\ldots=V^{(m-1)}(0)=0.\]
In view of \eqref{eq:i}, $m$ is even.
Assume that $V\in\mathcal{UM}$ and let $m=\deg(V,0)$. In view of \cite[Sec.~3.12.5]{Sa-Ge}, there exists a bi-analytic map $V_*:\R\to\R$ (i.e., analytic bijection with analytic inverse)
such that $V^m_*=V$.

\begin{definition}
Let $\mathcal{SP}$ denote the \emph{special class} of analytic $\mathcal{UM}$-potentials $V:\R\to\R_{\geq 0}$ such that:
\begin{itemize}
\item[($i$)] the degree $\deg(V,0)=2$;
\item[($ii$)] the inverse of the bi-analytic map $V_*$ satisfies $V_*^{-1}(x)={c}x+f(x^2)$ for some ${c}\neq 0$ and an analytic $f:\R\to\R$.
\end{itemize}
\end{definition}

{
\begin{remark}
Let us note that special potentials are in one-to-one correspondence with the set of pairs
$(c,g)$, where $g:\R\to\R$ is a bounded odd  analytic function and $c$
is a positive real number such that $c+g$ is a positive function. This correspondence is given by
\[V\mapsto (c,g),\quad\text{where}\quad (V^{-1}_*)'(x)=c+g(x).\]
As noted in Remark~\ref{rem:SP}, when we limit ourselves to even potentials, the only elements of $\mathcal{SP}$ are quadratic functions which correspond to the pairs of the form $(c,0)$.
\end{remark}
\begin{remark}
As we will see in Sections~\ref{sec:propertiesa}~and~\ref{sec:geomqper}, $\mathcal{SP}$ potentials also admit a natural characterization in the context of one-dimensional barrierless oscillators and their isochronicity. For any energy level $E>0$, let $\omega(E)$ denote the oscillation frequency of the oscillator determined by a potential $V\in\mathcal{UM}$ at energy $E$. Then $\mathcal{SP}$ potentials are the only potentials for which the frequency function $E\mapsto\omega(E)$ is constant (or, equivalently, constant on some open interval).
\end{remark}}

The following theorem indicates that resonant energy levels can only appear when both potentials have degree two.
Moreover, we show a family of pairs of degree-two potentials for which no resonance occurs. 

\begin{theorem}\label{thm:main1}
Let $V_1,V_2:\R\to\R_{\geq 0}$ be two $\mathcal{UM}$-potentials and let $m_1:=\deg(V_1,0)$ and $m_2:=\deg(V_2,0)$.
 Suppose that
\begin{itemize}
\item[$(a)$] at least one of the degrees $m_1$ and $m_2$ is greater than $2$; or
\item[$(b)$] $m_1=m_2=2$ and $V_1(x)=\omega V_2(\tau x)$ for some $\omega>0$ and $\tau\neq 0$ such that {$\sqrt{\tfrac{V''_1(0)}{V''_2(0)}}=|\tau|\sqrt{\omega}$} is irrational.
\end{itemize}
Then $\mathcal{E}(P,V_1,V_2)$ is empty {for any polygon $P\in \mathcal{RP}$}.
\end{theorem}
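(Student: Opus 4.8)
The plan is to convert resonance into a commensurability relation between two one–dimensional period functions and then to exclude that relation under each hypothesis. By the structure theory of Section~\ref{sec:genequ}: if $S_{E,\theta}$ carries a regular resonant orbit, then the horizontal oscillator (at energy $\theta$) and the vertical oscillator (at energy $E-\theta$) are each periodic, oscillating over closed intervals $J_1=J_1(\theta)$, $J_2=J_2(\theta)$ with $J_1\times J_2\subseteq\ov P$ whose endpoints are turning points of the corresponding potential or edges of $P$; moreover the minimal periods $\tau_1(\theta)$, $\tau_2(E-\theta)$ satisfy $\tau_1(\theta)/\tau_2(E-\theta)\in\Q$, and conversely commensurability together with an admissible pair $J_1,J_2$ produces a cylinder of regular resonant orbits. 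One may restrict to regular orbits: a singular resonant orbit between two convex corners is accompanied by such a cylinder, and the remaining singular resonant orbits (meeting a concave corner) impose, for fixed $E$, analytic relations in $\theta$ of the same kind handled below. Since $P$ has finitely many edges, each $\tau_i$ is piecewise real–analytic with finitely many ``regimes'', each of the form $\sqrt2\int_A^B\frac{dq}{\sqrt{e-V_i(q)}}$ with $A,B$ fixed edges of $P$ or the turning points $x_\pm(e)$ ($V_i(x_\pm(e))=e$); the free period $T_i(e):=\sqrt2\int_{x_-(e)}^{x_+(e)}\frac{dq}{\sqrt{e-V_i(q)}}$ extends real–analytically to all $e>0$.

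Suppose $E\in\mathcal{E}(P,V_1,V_2)$. Since there are finitely many regime–pairs and countably many rationals, a fixed $\rho\in\Q_{>0}$ and a fixed regime–pair occur for uncountably many $\theta$ in a single open regime–interval with $\tau_1(\theta)=\rho\,\tau_2(E-\theta)$ there; being real–analytic with an uncountable zero set, $\tau_1(\theta)-\rho\,\tau_2(E-\theta)$ vanishes identically on that whole regime. One then propagates the identity toward smaller $\theta$: at the lower end of a regime a turning point of $V_1$ (or $V_2$) meets an edge of $P$, where the corresponding period has an infinite one–sided derivative while the other side of the identity is smooth, so unless that same energy is a critical energy of the \emph{other} oscillator with matching behaviour the identity cannot persist and we are done; in the matching case we continue, and since there are only finitely many critical energies this terminates after finitely many steps at a regime in which the horizontal motion is free (which occurs for small $\theta$ once $0$ lies interior to the projection of $P$ onto the $q_1$–axis) or at a threshold energy $\theta_0>0$ where $J_1$ collapses to a point, and symmetrically for the vertical motion as $\theta\to E^-$. \emph{This propagation — pushing a commensurability identity, known on one regime, into a regime where the zero–energy asymptotics or the scaling functional equation bite, and disposing of the degenerate coincidences of critical energies — is the technical heart of the proof.}

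Case $(a)$: say $m_1\ge4$ (the case $m_2\ge4$ is symmetric under $\theta\mapsto E-\theta$). At a threshold the interval $J_1$ shrinks to a point, so $\tau_1(\theta)\to0$ while $\tau_2(E-\theta)$ stays positive, forcing $\tau_1/\tau_2\to0\ne\rho$. In the free case, the substitution $q=(m_1!\,\theta/V_1^{(m_1)}(0))^{1/m_1}u$ in $T_1$ gives $T_1(\theta)\sim K_{m_1}\theta^{1/m_1-1/2}$ with $K_{m_1}>0$ as $\theta\to0^+$; since $1/m_1-1/2<0$ we get $\tau_1(\theta)=T_1(\theta)\to+\infty$, whereas $\tau_2(E-\theta)\to\tau_2(E)<\infty$ because the vertical energy $E-\theta\to E>0$; thus $\tau_1/\tau_2\to\infty\ne\rho$. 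Either way the constancy of $\tau_1/\tau_2$ is contradicted, so $\mathcal{E}(P,V_1,V_2)=\emptyset$.

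Case $(b)$: $m_1=m_2=2$, $V_1(x)=\omega V_2(\tau x)$, and $\alpha:=\sqrt{V_1''(0)/V_2''(0)}=|\tau|\sqrt\omega$ is irrational. The substitution $\tilde q=\tau q$ identifies the horizontal oscillator at energy $\theta$ with the $V_2$–oscillator at energy $\theta/\omega$ run at time–speed $\alpha$, so each horizontal period equals $\alpha^{-1}$ times the corresponding $V_2$–period at energy $\theta/\omega$; in particular $T_1(\theta)=\alpha^{-1}T_2(\theta/\omega)$. Propagating as above, either we reach a threshold and the collapse argument of case $(a)$ gives $\tau_1/\tau_2\to0\ne\rho$, or we reach a regime where both oscillators are free, where $\tau_1(\theta)=\rho\,\tau_2(E-\theta)$ becomes the functional equation $T_2(u)=c\,T_2(f(u))$ on an interval, with $u=\theta/\omega$, $c=\rho\alpha$ and $f(u)=E-\omega u$. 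Since $f$ has a unique fixed point $u^{*}$ which is hyperbolic when $\omega\ne1$ and satisfies $f^2=\mathrm{id}$ when $\omega=1$, iterating $T_2(u)=c^{\,n}T_2(f^{\,n}(u))$ and using $T_2>0$ together with the analyticity of $T_2$ forces $c=1$ (and, when $\omega\ne1$, $T_2$ constant). But $c=\rho\alpha=1$ gives $\alpha=1/\rho\in\Q$, contradicting the irrationality of $\alpha$. Hence again $\mathcal{E}(P,V_1,V_2)=\emptyset$.
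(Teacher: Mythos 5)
Your opening reduction is where the proof breaks. You claim that a regular resonant orbit in $S_{E,\theta}$ forces the horizontal and vertical motions to be autonomous oscillations over fixed intervals $J_1,J_2$ with $J_1\times J_2\subseteq \overline{P}$, so that resonance becomes a two-term commensurability $\tau_1(\theta)=\rho\,\tau_2(E-\theta)$. This is false for a general rectilinear polygon (and $P\in\mathcal{RP}$ may even be non-simply connected or disconnected). Which vertical wall reverses the horizontal motion depends on the current vertical position, so the horizontal projection of a periodic orbit is in general a zigzag whose reversal abscissae vary along the orbit; it has no well-defined ``minimal period'' $\tau_1(\theta)$, and the orbit need not be contained in any rectangle $J_1\times J_2\subseteq\overline P$ (think of an orbit visiting both arms of an L-shaped region). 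The correct closing-up condition, which the paper extracts via the unfolding to a translation surface and the saddle-connection identity \eqref{eq:saddle}--\eqref{eq:sumsum}, is an \emph{arbitrary} integer linear relation $\sum_x m^1_x\,x=\sum_y m^2_y\,y$ among \emph{all} the side parameters $a(\theta),\bar a(\theta),a_\xi(\theta),\bar a_\xi(\theta),b(\theta),\dots$ of $P_{E,\theta}$, with finitely many but otherwise unconstrained integer coefficients (subject only to sign conditions on the extreme/marginal ones). Your two-term relation is only the special case of a rectangle, so your argument does not exclude the generic multi-term resonances; the whole of Proposition~\ref{prop:indep0} exists precisely to kill those.

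A second, independent gap sits inside your propagation step. When the identity is pushed to a regime boundary where a turning point meets an edge, you dismiss the coincidence of critical energies as ``the matching case \dots we continue.'' But this is exactly the hard case: when $V_1(x_K)=\bar V_1(\bar x_{\bar K})=v_1$, the leading square-root singularities of $a_{x_K}$ and $\bar a_{\bar x_{\bar K}}$ at $\theta=v_1$ can cancel for a suitable coefficient ratio $\gamma$, and the first derivative of the combination stays bounded. The paper has to iterate the operators $D_{\alpha,k}$ to order $N$ (the first order at which $W_1^{(N)}(s_0)\neq\gamma\bar W_1^{(N)}(s_0)$, Proposition~\ref{prop:gamma}), and separately dispose of $\gamma=-1$ and of the possibility $\bar V=V\circ\gamma$ (Lemma~\ref{lem:gamma}); without that, the propagation simply does not go through. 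Your case $(b)$ endgame (the functional equation $T_2(u)=cT_2(E-\omega u)$ and its iteration) is a reasonable alternative to the paper's Proposition~\ref{prop:twobad}$(a)$ \emph{for the two-term relation}, but it never confronts relations involving the internal side parameters $a_\xi,\bar a_\xi,b_\xi,\bar b_\xi$, which hypothesis $(b)$ of the theorem does not exclude and which the paper eliminates by the endpoint-singularity analysis, not by the irrationality assumption.
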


Finally, we present a deeper analysis of the set of resonant energies when the degrees of the two potentials are equal to two.
We show that the set of resonant energies is  generally not rich, i.e., it is empty or has only one element.
If the set of resonant energy levels has at least two elements, then it must be large (open, so uncountable), and this exceptional phenomenon occurs only when both potentials are in the special class $\mathcal{SP}$.

\begin{theorem}\label{thm:main2}
For any pair $V_1,V_2$ of $\mathcal{UM}$-potentials  and any polygon $P\in\mathcal{RP}$,
the set of resonant energy levels $\mathcal{E}(P,V_1,V_2)$ is bounded and satisfies the following trichotomy:
\begin{itemize}
\item[$(a)$] either $\mathcal{E}(P,V_1,V_2)$ is empty;
\item[$(b)$] or $\mathcal{E}(P,V_1,V_2)$ is a singleton, in which case $m_1=m_2=2$;
\item[$(c)$] or $\mathcal{E}(P,V_1,V_2)$ is non-empty and open, in which case $V_1,V_2$ belong to $\in\mathcal{SP}$ and $\sqrt{\frac{V''_1(0)}{V''_2(0)}}$ is rational.
\end{itemize}
\end{theorem}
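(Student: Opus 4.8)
The plan is to rectify each one-dimensional oscillator, turning the flow on $S_{E,\theta}$ into a billiard in a fixed direction inside a rectilinear polygon whose edge lengths are explicit analytic functions of the energy split, and then to translate the statement ``$E\in\mathcal E$'' into a rigidity property of the period functions of $V_1$ and $V_2$.

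First I would pass to rectified coordinates $x_i=\int_0^{q_i}\bigl(2(\theta_i-V_i(s))\bigr)^{-1/2}\,ds$, with $\theta_1:=\theta$, $\theta_2:=E-\theta$, so that each oscillator moves at unit speed. Using the bi-analytic root $V_{i*}$ (with $V_{i*}^{m_i}=V_i$) one sees that every ``partial period'' $\int_c^{c'}\bigl(2(\theta_i-V_i)\bigr)^{-1/2}\,dq_i$ is real-analytic in $\theta_i$ and extends analytically to a neighbourhood of $[0,\infty)$; when $m_i=2$ the full oscillator period is $T_i(\theta_i)=\sqrt2\int_{-\pi/2}^{\pi/2}W_i'(\sqrt{\theta_i}\sin\psi)\,d\psi$ with $W_i:=V_{i*}^{-1}$. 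Two consequences of this formula are used throughout. \emph{Isochrony criterion:} since the odd powers of $\sqrt{\theta_i}$ integrate to zero, $T_i$ is constant in $\theta_i$ iff $W_i'(x)-W_i'(0)$ is odd, i.e.\ $W_i(x)=a_ix+f_i(x^2)$, i.e.\ $V_i\in\mathcal{SP}$, in which case $T_i\equiv 2\pi/\sqrt{V_i''(0)}$. \emph{No exponential decay:} using $\sqrt{\theta_i-u^2}\le\sqrt{\theta_i}$ in $T_i(\theta_i)=2\int\bigl(2(\theta_i-V_i)\bigr)^{-1/2}\,dq_i$ gives $T_i(\theta_i)\ge \sqrt2\,\bigl(W_i(\sqrt{\theta_i})-W_i(-\sqrt{\theta_i})\bigr)/\sqrt{\theta_i}$, and since $W_i$ is an increasing bijection of $\R$ this prevents $T_i$ from decaying exponentially. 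In the rectified coordinates the flow on $S_{E,\theta}$ becomes the billiard in the fixed direction $(1,1)$ inside a rectilinear polygon $P_{E,\theta}$ whose horizontal edge lengths $a_k(\theta)$ are partial periods of $V_1$ (strictly decreasing in $\theta$) and whose vertical edge lengths $b_l(E-\theta)$ are partial periods of $V_2$ (strictly increasing in $\theta$).

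Next, for fixed $E$ I would split $(0,E)$ into finitely many intervals on which the combinatorial type of $P_{E,\theta}$ -- in particular which of the two one-dimensional motions are free oscillations and which are confined between fixed walls -- is constant. On each such interval $I$, a regular periodic orbit of $S_{E,\theta}$ exists precisely when one of countably many ``cylinder closing conditions'' holds, each of the form $\sum_k n_k a_k(\theta)=\sum_l m_l b_l(E-\theta)$ (equal horizontal and vertical holonomy, the core curve having slope $1$) with $n_k,m_l$ non-negative integers not all zero, subject to finitely many open inequalities; the coefficients are non-negative because the unfolded slope-$1$ trajectory progresses monotonically through the reflected copies. Hence the resonant $\theta\in I$ form a set that is either all of $I$ or countable, so $E\in\mathcal E$ iff some closing identity holds for all $\theta$ in a subinterval $J$. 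If on $J$ one of the two motions is confined, then the corresponding side of that identity contains, with positive coefficient, a confined partial period -- a strictly monotone non-constant analytic function -- while the other side is the opposite monotone, so equality on an interval would force all participating $a_k,b_l$ constant, a contradiction (the intermediate regimes, where one turning point is a wall and the other a free turning point, are handled the same way via the analytic continuation of the partial period and its $\sqrt\theta$-type growth). Thus a persistent cylinder can only live on an interval of pure double oscillation, which forces $E$ below the threshold $E^{\partial}>0$ at which an orbit first meets $\partial P$; so $\mathcal E\subseteq(0,E^{\partial})$ is bounded.

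It remains to establish the trichotomy. By Theorem~\ref{thm:main1}(a), $\mathcal E\ne\emptyset$ already implies $m_1=m_2=2$, so cases $(b)$ and $(c)$ each satisfy this automatically, and I only need to show that $\#\mathcal E\ge 2$ forces $(c)$. For $E\in\mathcal E$ the persistent cylinder lives on an interval of pure double oscillation, where the region is locally a product and the closing identity reads $T_1(\theta)=\rho\,T_2(E-\theta)$ on a subinterval, with $\rho\in\Q_{>0}$ (the ratio is analytic and rational on uncountably many $\theta$, hence constant). Take $E_1<E_2$ in $\mathcal E$, put $\delta:=E_2-E_1$ and let $\rho_1,\rho_2$ be the corresponding constants; comparing $T_1(\theta)=\rho_1 T_2(E_1-\theta)=\rho_2 T_2(E_2-\theta)$ and continuing analytically gives the functional equation $T_2(\theta+\delta)=\lambda T_2(\theta)$ with $\lambda:=\rho_1/\rho_2\in\Q_{>0}$, and symmetrically $T_1(\theta+\delta)=\lambda^{-1}T_1(\theta)$; the identity $T_1=\rho_1T_2(E_1-\cdot)$ also shows that the continuation of $T_2$ is positive on all of $\R$. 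If $\lambda\ne 1$ one of $T_1,T_2$ decays exponentially along the progression $\{n\delta\}$, contradicting the no-exponential-decay bound applied to its analytic continuation; hence $\lambda=1$, and $T_1,T_2$ are $\delta$-periodic, hence bounded. Feeding this back into the period-integral representation (a positive period function that is periodic in the energy must be constant) forces $T_1,T_2$ constant, so by the isochrony criterion $V_1,V_2\in\mathcal{SP}$, and then $\rho_1=T_1/T_2=\sqrt{V_2''(0)/V_1''(0)}\in\Q$, i.e.\ $\sqrt{V_1''(0)/V_2''(0)}$ is rational. Conversely, once $V_1,V_2\in\mathcal{SP}$ with this rationality, $T_1(\theta)/T_2(E-\theta)\equiv\rho_1\in\Q$ for every $\theta$ in the non-empty pure-double-oscillation range, so every $E\in(0,E^{\partial})$ is resonant; combined with $\mathcal E\subseteq(0,E^{\partial})$ this yields $\mathcal E=(0,E^{\partial})$, which is non-empty and open, i.e.\ $(c)$.

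The hard part is the rigidity used in the middle paragraph: proving that the analytic identities produced by a persistent cylinder are incompatible with any confined one-dimensional motion, and, in the last paragraph, ruling out exponential (or periodic non-constant) behaviour of the period functions. The delicate technical point is to make the ``equal-holonomy, non-negative coefficients'' description of cylinders and the asymptotics of the various partial periods precise for an arbitrary -- possibly disconnected, possibly non-simply-connected -- rectilinear polygon, where in the confined regimes the geometry couples the two coordinates.
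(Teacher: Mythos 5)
Your proposal follows the same overall architecture as the paper -- rectify to a slope-$1$ billiard in a $\theta$-family of rectilinear polygons, use analyticity to convert ``resonant for uncountably many $\theta$'' into a holonomy identity on a subinterval, derive a geometric quasi-periodicity equation for the period function from two resonant energies, and conclude constancy. The one genuinely new idea is nice: ruling out a quasi-periodicity exponent $\lambda\ne1$ from the elementary lower bound $T_i(\theta)\ge\sqrt2\bigl(W_i(\sqrt\theta)-W_i(-\sqrt\theta)\bigr)/\sqrt\theta$ neatly replaces the paper's treatment of the nonzero-exponent cases in Proposition~\ref{prop:mainqper}. But three asserted steps carry nearly all of the difficulty and are not actually proved.

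First, the parenthetical ``a positive period function that is periodic in the energy must be constant'' is precisely the content of Theorem~\ref{thm:const}, Proposition~\ref{prop:mainqper}, and Lemma~\ref{lem:estpos} in the case $\xi=0$; the paper proves it by a careful analysis of the $\sqrt x$-growth of the $A^{-1}$-images $\rho_{0,k}$ of the Fourier modes, which is the technical heart of Section~\ref{sec:geomqper}. Your reduction to $\lambda=1$ does not circumvent this, and without it the implication $\#\mathcal E\ge2\Rightarrow V_1,V_2\in\mathcal{SP}$ is not reached. Second, the rigidity that a persistent closing identity cannot involve any confined partial period is argued via opposite monotonicity of the two sides. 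This fails: the integer coefficients attached to internal parameters in the holonomy identity are \emph{differences} of crossing numbers of positively and negatively oriented sides and can be negative (only the extreme marginal sides carry forced nonnegative weights, cf.~\eqref{eq:ext}--\eqref{eq:ext4}), and the marginal parameters $a,\bar a,b,\bar b$ are not monotone in $\theta$. The argument actually needed is the blow-up of $D^{(N)}(a_\xi-\gamma\bar a_{\bar\xi})$ at the endpoint of its analyticity domain (Propositions~\ref{prop:gamma} and~\ref{prop:indep0}); you gesture at a ``$\sqrt\theta$-type growth'' but do not develop it. Third, the identity $\mathcal E=(0,E^\partial)$ is stronger than openness and not true in general: once some $\theta$ produce orbits meeting $\partial P$, other $\theta$ may still carry cylinders whose boundary saddle connections have zero net intersection with the newly created internal sides, so $\mathcal E$ may extend beyond $E^\partial$. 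What the theorem requires is only openness, proved in the paper locally by combining the constancy of $a+\bar a$ and $b_E+\bar b_E$ (once $V_1,V_2\in\mathcal{SP}$) with persistence of the combinatorial data of a saddle connection under small perturbation of $(E,\theta)$ and formula~\eqref{eq:sumsum}, to show the perturbed saddle connection keeps direction $\pi/4$; this step is absent from your proposal.
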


\noindent
{Summarizing the case where both potentials $V_1$, $V_2$ are of degree $2$, we have:
\begin{itemize}
\item if at least one potential is not $\mathcal{SP}$, then we can have at most one resonant energy level;
\item if exactly one potential is $\mathcal{SP}$, then there are no resonant energy levels (it follows from \eqref{eq:ga4} in Proposition~\ref{prop:twobad});
\item if neither potential belongs to $\mathcal{SP}$, then we can have exactly one resonant energy level (some examples are constructed in Appendix~\ref{sec:badpot}).
\end{itemize}
If both potentials are in $\mathcal{SP}$, then:
\begin{itemize}
\item the rationality of $\sqrt{\frac{V''_1(0)}{V''_2(0)}}$ may yield  plenty of resonant energy levels, whereas
\item the irrationality of $\sqrt{\frac{V''_1(0)}{V''_2(0)}}$ implies the absence of resonant energy levels at all (it follows also from \eqref{eq:ga4} in Proposition~\ref{prop:twobad} and \eqref{eq:spab}).
\end{itemize}
If, in addition, the potentials are related by the rescaling condition $V_1(x)=\omega V_2(\tau x)$, then both are in $\mathcal{SP}$ or both are not. In the former case, as we have already seen, the resonance behavior is determined by whether $\sqrt{\tfrac{V''_1(0)}{V''_2(0)}}=|\tau|\sqrt{\omega}$ is rational.
 In the latter case, when both are not in $\mathcal{SP}$, we have:
\begin{itemize}
\item if $|\tau|\sqrt{\omega}$ is irrational or $|\tau|\sqrt{\omega}$ is rational with $\omega\neq 1$, then there are no resonant levels (see part ($c$) in Proposition~\ref{prop:twobad});
\item if $\omega=1$ and $\tau$ is rational, then we can have exactly one resonant energy level (see Remark~\ref{rmk:relat} in Appendix~\ref{sec:badpot}).
\end{itemize}}

Detecting the gap above for the size of the resonance energy set and understanding the role of the special class $\mathcal{SP}$ in its study is the most outstanding  achievement of the article.
This phenomenon seems new even in the simplest case when the uncoupled oscillators are not trapped in any set.
{
In this case, the third part of Theorem~\ref{thm:main2} can be regarded as an analogue of the classical Bertrand's theorem \cite{Ber}, which characterizes centrally symmetric potentials in the situation where all bounded orbits are periodic. Recall that in this setting the centrally symmetric potential must be either the harmonic oscillator
($V(r)=ar^2$)
 or the Kepler potential
($V(r)=-a/r$). In our setting, when we ignore the assumptions about the symmetry of potentials $V_1$ and $V_2$, special potentials play the role of these two potentials. More precisely, we should take into account only the harmonic oscillator, since in our approach we have no possibility of considering potentials with singularities.}

Hamiltonian systems with elastic reflections have been studied extensively in both integrable and hyperbolic settings; see, for example, \cite{Do,Du,Ko-Tr,Li,Woj,Zh} for example.

{
An important example of an integrable physical system with reflections is the Boltzmann system, in which a single particle moves in the plane under a gravitational field and undergoes elastic reflection from a horizontal line (see \cite{GJ}). In this system, for appropriate values of the first integrals, resonances (periodic orbits) arise, and are completely described in \cite{GR}.}

{The study of dynamics in integrable and near-integrable cases with the potentials considered here and impacts with
vertical and horizontal walls was introduced in \cite{Pn-RK18,Pn-RK21,Pn-RK22,Ya-RK}. The quasi-integrable
situation, with impacts for general rectilinear polygons that
is studied here was introduced in \cite{BEFCPRK}, where a conjugacy to motion on billiards and flat surfaces
via action-angle coordinates was constructed.
Its ergodic properties were studied for very simple (star-shaped) rectilinear polygons and even potentials
in \cite{Fr-RK}.}
The main tool used in \cite{Fr-RK} was to pass to the framework of  translation surfaces and use the techniques developed to study curves in the moduli space of translation surfaces introduced in \cite{Mi-We14} and developed in \cite{Fr-Sh-Ul} and \cite{Fr}.

In the present article, we deal with a more general situation in which the polygon is an arbitrary rectilinear polygon, and the potentials need not be even. This severely limits the use of the tools built so far for studying  translation surface curves. This time, the primary motivation is to question the dynamics (persistence) of the system derived from our original system after small perturbations of the Hamiltonian \eqref{eq:Hamfun}.
Then, to apply techniques that mimic classical KAM, we need precise information about the occurrence of resonances for the unperturbed system. Understanding the occurrence of resonances gives us hope to formulate appropriate Diophantine properties and attempt an attack via KAM. This is a challenging task in the case of quasi-integrable systems, as shown in the ground-breaking articles by Marmi-Moussa-Yoccoz \cite{Ma-Mo-Yo,Ma-Mo-Yo2}.

{To conclude this section, we present a summarizing theorem that describes the dynamics of uncoupled oscillators in the case where they are not confined by any polygonal barriers. In this setting, for each pair $(E,\theta)$, the flow on $S_{E,\theta}$ is conjugate to a linear translation on a torus; hence it is either minimal (all orbits are dense) or completely periodic (all orbits have the same period).
\begin{theorem}
Suppose $V_1,V_2$ are $\mathcal{UM}$-potentials. If at least one of $m_1=\deg(V_1,0)$ and $m_2=\deg(V_2,0)$ is greater than $2$, then
for every $E>0$ the set of $\theta\in(0,E)$ for which the flow on $S_{E,\theta}$ is completely periodic is countable and dense. If $m_1=m_2=2$, then we have the following trichotomy:
\begin{itemize}
\item[$(a)$] for every $E>0$ the set of $\theta\in(0,E)$ for which the flow on $S_{E,\theta}$ is completely periodic is countable and dense;
\item[$(b)$] or there exists exactly one energy level $E_0>0$ such that for every $\theta\in(0,E_0)$ the flow on $S_{E_0,\theta}$ is completely periodic, and any other energy level behaves as in point $(a)$;
\item[$(c)$] or for every $E>0$ and for every $\theta\in(0,E)$ the flow on $S_{E,\theta}$ is completely periodic and all periods are the same (do not depend on $E$ and $\theta$). This situation occurs if and only if $V_1,V_2\in\mathcal{SP}$ and $\sqrt{\frac{V''_1(0)}{V''_2(0)}}$ rational.
\end{itemize}
\end{theorem}
Since the proof of this result is essentially scattered throughout the proofs of the previous theorem and does not contain any new concepts, we decided to omit it.}

\subsection{Structure and main tools of the paper}
The first standard step, performed in Section~\ref{sec:osctobil}, is to move to the framework of translation surfaces. Following the arguments from \cite{Fr-RK}, we show that the flow  $(\varphi_t)_{t\in\R}$ on $S_{E,\theta}$ is conjugated to a translation flow on a certain surface.
A complete description of how the surface parameters depend on $(E,\theta)$ can be found in Section~\ref{sec:osctobil}.
The remainder of the proof can be divided into three independent components: an analytic component (in Section~\ref{sec:propertiesa}), a translation component (in Section~\ref{sec:genequ}) and a Fourier component (in Section~\ref{sec:geomqper}). In Section~\ref{sec:propertiesa},
we prove a kind of independence of functions giving the parameters determining translation surfaces found in Section~\ref{sec:osctobil}.
The main tool for proving independence is to analyze the behavior of the mentioned functions at the ends of their domains.
We show that a sufficiently high-order derivative of the function has a singularity at the end of its domain, which is the most novel achievement of the analytic component of the paper. In Section~\ref{sec:genequ},  we  prove a simple criterion (Theorem~\ref{thm:main-min}) to show the absence of resonance for the {directional} flow on translation surfaces tiled by rectilinear polygons. In fact, we benefit
here from ideas developed in \cite{Fr}. The third component (Section~\ref{sec:geomqper}) is the most challenging. Here, we carry out an in-depth analysis of geometrically quasi-periodic analytic functions that arise as parameters of translation surfaces. This part is the most technically advanced and novel, but also the most valuable, as it is essential to show the existence of a gap when studying the size of the set of resonant energy levels. Finally, Section~\ref{sec:appl} combines all three components to prove the main theorems.

\subsection*{Acknowledgements}
The author thanks the anonymous reviewers for their critical comments, which made it possible to significantly improve the presentation of the results. Research was partially supported by  the Narodowe Centrum Nauki Grant 2022/45/B/ST1/00179

\section{From two-dimensional oscillations to polygonal billiards}\label{sec:osctobil}
In this section, we show that the flow $(\varphi^P_t)_{t\in\R}$ restricted to $S_{E,\theta}$ is conjugated to a billiard flow on a rectilinear polygon $P_{E,\theta}\in\mathcal{RP}$ so that the directions of the billiard orbits are $\pm\pi/4$, $\pm 3\pi/4$. The transition to billiards relies on the arguments in Section~3 of \cite{Fr-RK}. Although only even potentials were considered in \cite{Fr-RK}, all the arguments used there also apply to $\mathcal{UM}$-potentials, with minor modifications. The transition to billiard flows on rational polygons (which are rectilinear) provides an opportunity to use fruitfully some basic properties of linear flows on compact translation surfaces, which are formed from polygons in the unfolding procedure. In Section~\ref{sec:genequ}, we briefly introduce the theory of translation surfaces.

For every  $V\in\mathcal{UM}$, let $V^{-1}:[0, +\infty)\to[0,+\infty)$ be the inverse of its positive branch.
Then $V^{-1}$ is continuous and analytic on $(0, +\infty)$ with $V^{-1}(0)=0$ and $(V^{-1})'(y)>0$ for $y>0$.
Denote by $\bar{V}:\R\to\R_{\geq 0}$ the reflected version of $V$, i.e.,  $\bar{V}(y)=V(-y)$ for $y\in\R$. Then $\bar{V}$ also belongs to $\mathcal{UM}$.

\medskip

Suppose that $V_1,V_2:\R\to\R_{\geq 0}$ are $\mathcal{UM}$-potentials  and let  $P\in\mathcal{RP}$.
We study the behavior of Hamiltonian flow related to
the Hamiltonian equation of the form \eqref{eq:Ham2} with the additional rule that the particle wanders inside the polygon $P$
and bounces off its walls elastically.

Recall that $(\varphi_t)_{t\in\R}=(\varphi^P_t)_{t\in\R}$ is the  Hamiltonian flow describing the behavior of a particle
in the polygon $P$.
Its phase space is foliated by invariant sets $\{S_{E,\theta}:E\geq 0,0\leq \theta\leq E\}$.
We denote by $(\varphi^{P,E,\theta}_t)_{t\in\R}$ the restriction of  $(\varphi_t)_{t\in\R}$ to $S_{E,\theta}$.

By definition,
\[S_{E,\theta}\subset\R^2\times\left[-\bar{V}_1^{-1}(\theta),V_1^{-1}(\theta)\right]\times
\left[-\bar{V}_2^{-1}(E-\theta),V_2^{-1}(E-\theta)\right].\]
Let us consider new coordinates on
\begin{gather*}
\left[-\bar{V}_1^{-1}(\theta),V_1^{-1}(\theta)\right]\times\left[-\bar{V}_2^{-1}(E-\theta),V_2^{-1}(E-\theta)\right]\text{ and }\\
\R^2\times\left[-\bar{V}_1^{-1}(\theta),V_1^{-1}(\theta)\right]\times
\left[-\bar{V}_2^{-1}(E-\theta),V_2^{-1}(E-\theta)\right]
\end{gather*}
given by {scaled angle coordinates}
\begin{align*}
  \eta(q_1,q_2):=(\eta_1(q_1),\eta_2(q_2)) & =\left(\int_{0}^{q_1}\frac{dy}{\sqrt{2}\sqrt{\theta-V_1(y)}},\int_{0}^{q_2}\frac{dy}{\sqrt{2}\sqrt{E-\theta-V_2(y)}}\right) \\
  \bar{\eta}(p_1,p_2,q_1,q_2) & = \big(\eta'_1(q_1)p_1,\eta'_2(q_2)p_2, \eta_1(q_1),\eta_2(q_2)\big).
\end{align*}
Then, the flow $\big(\bar{\eta}\circ\varphi^{P,E,\theta}_t\circ\bar{\eta}^{-1}\big)_{t\in\R}$ coincides with
the billiard flow on
\[P_{E,\theta}:=\eta\left(P\cap \left(\left[-\bar{V}_1^{-1}(\theta),V_1^{-1}(\theta)\right]\times\left[-\bar{V}_2^{-1}(E-\theta),V_2^{-1}(E-\theta)\right]\right)\right)\]
so that the directions of its orbits are $\pm\pi/4$, $\pm 3\pi/4$. Indeed, by the definition of $S_{E,\theta}$,
\begin{align*}
\frac{d}{dt}\eta_1(q_1)&=\eta'_1(q_1)p_1=\sqrt{\frac{p_1^2/2}{\theta-V_1(q_1)}}=\pm 1,\\
\frac{d}{dt}\eta_2(q_2)&=\eta'_2(q_2)p_2=\sqrt{\frac{p_2^2/2}{E-\theta-V_2(q_2)}}=\pm 1.
\end{align*}
This calculation also shows that $\bar{\eta}$ extends continuously to the boundary of its domain.


As
$P\cap \left(\left[-\bar{V}_1^{-1}(\theta),V_1^{-1}(\theta)\right]\times\left[-\bar{V}_2^{-1}(E-\theta),V_2^{-1}(E-\theta)\right]\right)$
is a rectilinear polygon
and $\eta$ sends vertical and horizontal segments to vertical and horizontal segments, respectively,  $P_{E,\theta}$ is also a
rectilinear polygon.
We now determine some valuable numerical data of $P_{E,\theta}$ for any $E>0$ and $\theta\in(0,E)$.

\medskip

To every $P\in\mathcal{RP}$, we assign four finite (or empty) subsets of $\R_{\geq 0}$ defined as follows:
\begin{itemize}
\item let $X_P^+\subset \R_{\geq 0}$ be the set of non-negative first coordinates of  vertical sides in $P$;
\item let $X_P^-\subset \R_{> 0}$ be the set of absolute values of the negative first coordinates of  vertical sides in $P$;
\item let $Y_P^+\subset \R_{\geq 0}$ be the set of non-negative second coordinates of  horizontal sides in $P$;
\item let $Y_P^-\subset \R_{> 0}$ be the set of absolute values of the negative second coordinates of  horizontal sides in $P$.
\end{itemize}
Let
\[x_P^+:=\max X_P^+,\quad x_P^-:=\max X_P^-,\quad y_P^+:=\max Y_P^+,\quad y_P^-:=\max Y_P^-. \]
These are the parameters of the extreme sides of $P$, with the convention that the maximum of the empty set is zero.

For any $E>0$  and any $\xi>0$, let us consider eight continuous maps:
\begin{gather*}
a,\bar{a}:(0,+\infty)\to\R,\  a_{\xi}:[V_1(\xi),+\infty)\to\R, \ \bar{a}_{\xi}:[\bar{V}_1(\xi),+\infty)\to\R,  \\
b=b_E,\bar{b}=\bar b_E:[0,E)\to\R, \ b_{\xi}=b_{E,\xi}:[0,E-V_2(\xi)]\to\R \
(\text{if }V_2(\xi)<E)\text{ and }\\
\bar{b}_{\xi}=\bar{b}_{E,\xi}:[0,E-\bar{V}_2(\xi)]\to\R \
(\text{if }\bar{V}_2(\xi)<E)
\end{gather*}
given by
\begin{gather}\label{def:a}
a(\theta)=\int_0^{V^{-1}_1(\theta)}\frac{1}{\sqrt{2}\sqrt{\theta-V_1(y)}}dy,
\quad a_{\xi}(\theta)=\int_0^{\xi}\frac{1}{\sqrt{2}\sqrt{\theta-V_1(y)}}dy,\\
\nonumber
\bar{a}(\theta)=\int_0^{\bar{V}^{-1}_1(\theta)}\frac{1}{\sqrt{2}\sqrt{\theta-\bar{V}_1(y)}}dy,
\quad \bar{a}_{\xi}(\theta)=\int_0^{\xi}\frac{1}{\sqrt{2}\sqrt{\theta-\bar{V}_1(y)}}dy,\\
\nonumber
b_E(\theta)=\int_0^{V^{-1}_2(E-\theta)}\frac{1}{\sqrt{2}\sqrt{E-\theta-V_2(y)}}dy,
\quad  b_{E,\xi}(\theta)=\int_0^{\xi}\frac{1}{\sqrt{2}\sqrt{E-\theta-V_2(y)}}dy,\\
\nonumber
\bar{b}_E(\theta)=\int_0^{\bar{V}^{-1}_2(E-\theta)}\frac{1}{\sqrt{2}\sqrt{E-\theta-\bar{V}_2(y)}}dy,
\quad \bar{b}_{E,\xi}(\theta)=\int_0^{\xi}\frac{1}{\sqrt{2}\sqrt{E-\theta-\bar{V}_2(y)}}dy.
\end{gather}
\begin{figure}[h]
\includegraphics[width=0.5 \textwidth]{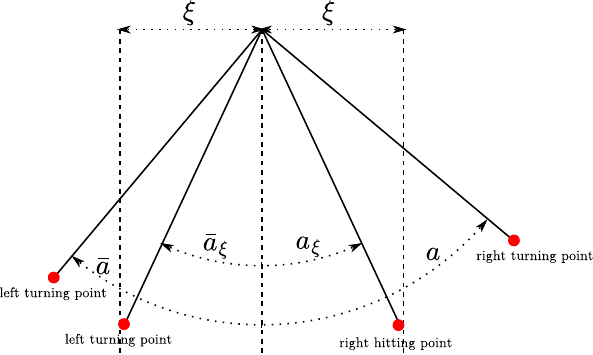}
\caption{The maps $a$, $\bar{a}$, $a_\xi$, $\bar{a}_\xi$ for non-even $V_1$.}
\label{fig:osc}
\end{figure}
The maps $a$, $\bar{a}$, $b$, $\bar{b}$ are ``quarter periods'' of oscillators without barriers, that is, they describe the time required for the oscillator to travel from the neutral position to the turning point, moving either to the right or to the left, or upward or downward. For polynomial potentials, these are elliptic integrals. In contrast, the functions $a_\xi$, $\bar{a}_\xi$, $b_\xi$, $\bar{b}_\xi$
describe the time to the first impact with a horizontal or vertical barrier located at a distance $\xi$ from the neutral point, see Figure~\ref{fig:osc}.
In view of Propositions~4.1~and~4.2 in \cite{Fr-RK},
\begin{align}\label{eq:analytic}
\begin{split}
&a,\bar{a}:(0,+\infty)\to\R,\ a_{\xi}:(V_1(\xi),+\infty)\to\R,\
\bar{a}_{\xi}:(\bar{V}_1(\xi),+\infty)\to\R,\\
&b,\bar{b}:[0,E)\to\R,\ b_{\xi}:[0,E-V_2(\xi))\to\R,\
\bar{b}_{\xi}:[0,E-\bar{V}_2(\xi))\to\R\\
&\text{ are all analytic.}
\end{split}
\end{align}
By the definition of $\eta$, for every $E>0$ and $\theta\in(0,E)$, the polygon $P_{E,\theta}\in\mathcal{RP}$ is rectilinear with:
\begin{align*}
X^+_{P_{E,\theta}}&\subset\{a(\theta)\}\cup\{a_x(\theta):x\in X^+_P,\, V_1(x)\leq \theta\},\\
X^-_{P_{E,\theta}}&\subset\{\bar{a}(\theta)\}\cup\{\bar{a}_x(\theta):x\in X^-_P,\, \bar{V}_1(x)\leq \theta\},\\
Y^+_{P_{E,\theta}}&\subset\{b(\theta)\}\cup\{b_y(\theta):y\in Y^+_P,\, V_2(y)\leq E-\theta\},\\
Y^-_{P_{E,\theta}}&\subset\{\bar{b}(\theta)\}\cup\{\bar{b}_y(\theta):y\in Y^-_P,\, \bar{V}_2(y)\leq E-\theta\}.
\end{align*}

\begin{figure}[h]
\includegraphics[width=0.8 \textwidth]{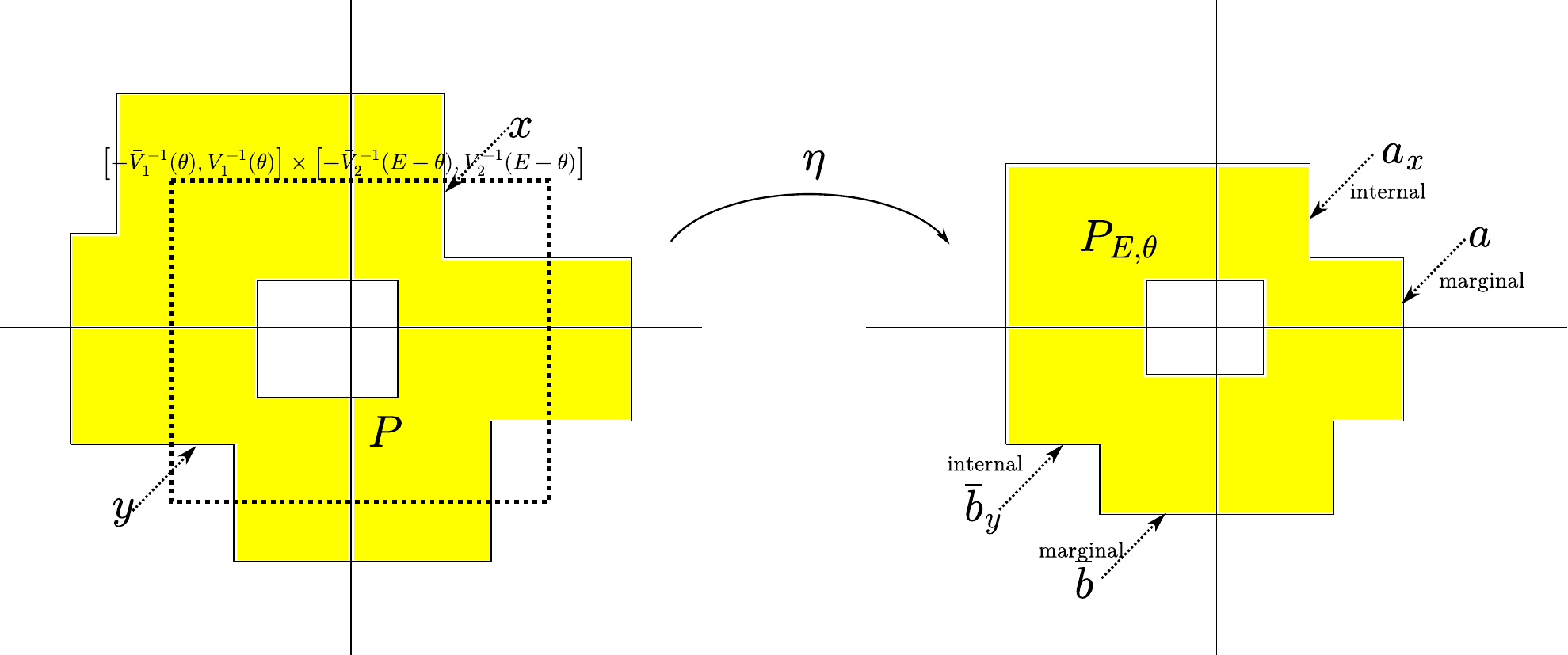}
\caption{Internal and marginal sides of $P_{E,\theta}$.}
\label{fig:sides}
\end{figure}
\begin{remark}\label{rmk:marg}
By the definition of $P_{E,\theta}$, its sides can be divided into two types: \emph{internal} and \emph{marginal}.  The first class consists of the images, under $\eta$, of the sides of the
polygon $P$.
The second class consists of the images of pieces of sides of the rectangle $[-\bar{V}_1^{-1}(\theta),V_1^{-1}(\theta)]\times
[-\bar{V}_2^{-1}(E-\theta),V_2^{-1}(E-\theta)]$, { see Figure~\ref{fig:sides}}. All marginal sides are extreme. Moreover, ${a}_x(\theta)$, $\bar{a}_x(\theta)$, ${b}_y(\theta)$, $\bar{b}_y(\theta)$ are the parameters of the internal sides, while ${a}(\theta)$,  $\bar{a}(\theta)$, ${b}(\theta)$, $\bar{b}(\theta)$ are the parameters of the marginal sides. In exceptional cases, a side can be both internal and marginal.
\end{remark}

For any $E>0$, let us consider the finite partition $\mathcal{J}_E$ (into open intervals) of the interval $[0,E]$ determined by the numbers
\[V_1(x),\ x\in X^+_P;\quad \bar{V}_1(x), \ x\in X^-_P;\quad E-V_2(y),\ y\in Y^+_P;\quad  E-\bar{V}_2(y),\ y\in Y^-_P.\]
Then, the partition points $\theta$ are those for which the polygon $P_{E,\theta}$ has sides that are both internal and marginal.

\begin{remark}\label{rem:XYI}
Fix $I\in \mathcal{J}_E$. Then $I\ni \theta\mapsto P_{E,\theta}\in\mathcal{RP}$ is a smooth curve of billiard tables in $\mathcal{RP}$.
{Moreover, the sets $X^+_I:=\{x\in X^+_P:V_1(x)<\theta\}$
and its counterparts $X^-_I$, $Y^+_I$, $Y^-_I$ do not depend on the choice of $\theta\in I$.
Since any side that is not extreme must be internal, for every $\theta \in I$,
\begin{gather}\label{eq:XY+-1}
\begin{aligned}
X^+_{P_{E,\theta}}\setminus\{x^+_{P_{E,\theta}}\}\subset\{a_x(\theta):x\in X^+_I\},&\quad
\end{aligned}
\end{gather}
with the same properties for the remaining counterparts.
On the other hand, any extreme side can be marginal or internal, so
\begin{align}\label{eq:XY+-2}
\begin{aligned}
&x^+_{P_{E,\theta}}\!=a(\theta) \text{ for all }\theta\in I \text{ or } x^+_{P_{E,\theta}}\!=a_{x}(\theta) \text{ for all }\theta\in I
\text{ for some }x\in X^+_I,\\
\end{aligned}
\end{align}
with the same properties  for the remaining counterparts.
More precisely, if an extreme side of $P_{E,\theta}$ is internal, then it must be the image of an extreme side of $P$. Hence,
\begin{align}\label{eq:XY+-3}
\begin{aligned}
\text{if }\ V_1(x^+_P)\leq \theta, \ \text{ then }\ & x^+_{P_{E,\theta}}=a_{x}(\theta)\ \text{ for some }\ x\in X^+_I;\\
\end{aligned}
\end{align}
with the same properties  for the remaining counterparts.}

\end{remark}

\section{Properties of functions $a$, $b$, $a_{\xi}$ and $b_{\xi}$}\label{sec:propertiesa}
In the previous section, we identified some parameters of the polygons $P_{E,\theta}$, which are determined by the functions of the form $a$, $b$, $a_{\xi}$, $b_{\xi}$ and their reflections.
In order to prove the absence of resonances in $S_{E,\theta}$ (in Section~\ref{sec:appl}), or equivalently for the billiard flow on $P_{E,\theta}$, we will need a form of independence for finite families of such analytic functions, see Proposition~\ref{prop:indep0}.
For this purpose, we perform an in-depth analysis of their behavior at the endpoints of their domains  in this section.

{ In what follows, for any interval $I\subset \R$, we denote by $C(I)$ the space of real-valued continuous functions on $I$, and by
$C^{\omega}(I)$ the space of real-valued analytic functions.}

Assume that $V:\R\to\R_{\geq 0}$ is a $\mathcal{UM}$-potential.
Let us consider
$a:(0,  +\infty)\to\R_{>0}$ and $a_{\xi}:[V(\xi),  +\infty)\to\R_{>0}$ ($\xi>0$) defined by \eqref{def:a}.
As noted above, $a$ and $a_{\xi}$ on $(V(\xi),  +\infty)$ are analytic, and $a_{\xi}$ is continuous at $V(\xi)$.
Let $m:=\deg(V,0)$  and let $V_*:\R\to\R$ be a bi-analytic map  (i.e., analytic bijection with analytic inverse)
such that $V^m_*=V$. We can choose $V_*$ so that $V_*'(x)>0$ for all $x\in \R$. Denote by $W:\R\to\R$ the inverse of $V_*$.
Using integration by substitution twice, we have
\begin{align}\label{eq:formofa}
\begin{aligned}
\sqrt{2}a(\theta)&=\int_0^{W(\theta^{\frac{1}{m}})}\frac{1}{\sqrt{\theta-V(y)}}dy=\left|\begin{array}{c}
                                                                                  x=V_*(y),\,y=W(x) \\
                                                                                  dy=W'(x)\,dx
                                                                                \end{array}\right|\\
&=\int_0^{\theta^{\frac{1}{m}}}\frac{W'(x)}{\sqrt{\theta-x^m}}dx
=\left|\begin{array}{c}
                                                                                  s=\frac{x}{\theta^{\frac{1}{m}}} \\
                                                                                  dx=\theta^{\frac{1}{m}}\,ds
                                                                                \end{array}\right|
=\frac{1}{\theta^{\frac{1}{2}-\frac{1}{m}}}\int_0^{1}\frac{W'(\theta^{\frac{1}{m}} s)}{\sqrt{1-s^m}} ds
\end{aligned}
\end{align}
and
\[
\sqrt{2}a_{\xi}(\theta)=\int_0^{\xi}\frac{1}{\sqrt{\theta-V(y)}}dy
=\int_0^{V_*(\xi)}\frac{W'(x)}{\sqrt{\theta-x^m}}dx
=\frac{1}{\theta^{\frac{1}{2}-\frac{1}{m}}}
\int_0^{\frac{V_*(\xi)}{\theta^{\frac{1}{m}}}}\frac{W'(\theta^{\frac{1}{m}} s)}{\sqrt{1-s^m}} ds
\]
for every $\theta\geq V(\xi)$. It follows that
\begin{equation}\label{eq:limfin}
\lim_{\theta\searrow 0} \theta^{\frac{1}{2}-\frac{1}{m}}a(\theta)=\frac{W'(0)}{\sqrt{2}}\int_0^{1}\frac{ds}{\sqrt{1-s^m}}>0.
\end{equation}

For any $c\geq 0$, we denote by $C^{\omega}(c,+\infty)$ the space of real analytic maps on $(c,+\infty)$.
For any $\alpha\geq 0$ and $c\geq 0$, let $D_{\alpha}:C^{\omega}(c,+\infty)\to C^{\omega}(c,+\infty)$ be the linear differential operator defined by
\[D_{\alpha}(f)(\theta)=m\frac{d}{d\theta}(\theta^\alpha f(\theta)).\]
Fix $\xi>0$ and let $s_0:=V_*(\xi)$.  For any $\alpha\geq 0$ and $k\in\Z_{\geq 0}$, let $D_{\alpha,k}:C^{\omega}(s_0^m,+\infty)\to C^{\omega}(s_0^m,+\infty)$ (recall that $s_0^m=V_*^m(\xi)=V(\xi)$) be the affine differential operator defined by
\[D_{\alpha,k}(f)(\theta)=D_{\alpha}(f)(\theta)+\frac{s^{k+1}_0}{\theta^{\frac{k+1}{m}
+\frac{1}{2}}}\frac{W^{(k+1)}(s_0)}{\sqrt{\theta-s_0^m}}.\]
Then, for all $f,g\in C^{\omega}(s_0^m,+\infty)$, we have
\begin{equation}\label{eq:diffDs}
D_{\alpha,k}(f)-D_{\alpha,k}(g)=D_\alpha(f-g).
\end{equation}
For any $\alpha\geq 0$ and $k\in\Z_{\geq 0}$, let $f_{\alpha,k}\in C^{\omega}(s_0^m,+\infty)$  be given by
\begin{equation}\label{eq:diffD}
f_{\alpha,k}(\theta)=\frac{1}{\theta^{\alpha}}\int_0^{s_0\theta^{-\frac{1}{m}}}\frac{W^{(k+1)}(\theta^{\frac{1}{m}} s)s^k}{\sqrt{1-s^m}} ds.
\end{equation}
Of course, $\sqrt{2}a_{\xi}=f_{\frac{1}{2}-\frac{1}{m},0}$.

By a standard chain-rule argument, for any $\alpha\geq 0$ and $k\in\Z_{\geq 0}$, we have
\begin{equation}\label{eq:Dak}
D_{\alpha,k}(f_{\alpha,k})= f_{1-\frac{1}{m},k+1}.
\end{equation}
%



Let us pass to the reflected version $\bar{V}:\R\to\R_{\geq 0}$ of $V$, i.e., $\bar{V}(y)=V(-y)$ for $y\in\R$.
Let us consider bi-analytic maps $\bar{V}_*:\R\to \R$ and $\bar{W}:\R\to\R$ given by
\[\bar{V}_*(y)=-V(-y)\text{ and }\bar{W}(x)=-W(-x).\]
Then $\bar{V}_*$ is strictly increasing,  $\bar{V}_*^m=\bar{V}$, and $\bar{W}:\R\to\R$ is the inverse of $\bar{V}_*$.

For any $\xi>0$, let $\bar{\xi}>0$ be the unique number such that $\bar{V}(\bar{\xi})=V(\xi)=s_0^m$.
By definition, $\bar{a}_{\bar{\xi}}:[V(\xi),+\infty)\to\R$ is given by
\[\sqrt{2}\bar{a}_{\bar{\xi}}(\theta)=\int_0^{\bar{\xi}}\frac{1}{\sqrt{\theta-\bar{V}(y)}}dy
=\frac{1}{\theta^{\frac{1}{2}-\frac{1}{m}}}
\int_0^{\frac{\bar{V}_*(\bar{\xi})}{\theta^{\frac{1}{m}}}}\frac{\bar{W}'(\theta^{\frac{1}{m}} s)}{\sqrt{1-s^m}} ds
\text{ for all } \theta\geq \bar{V}(\bar{\xi}).
\]
As $\bar{V}_*(\bar{\xi})=V_*(\xi)=s_0$, we have
\[\sqrt{2}\bar{a}_{\bar{\xi}}(\theta)
=\frac{1}{\theta^{\frac{1}{2}-\frac{1}{m}}}
\int_0^{s_0\theta^{-\frac{1}{m}}}\frac{\bar{W}'(\theta^{\frac{1}{m}} s)}{\sqrt{1-s^m}} ds.
\]
For any $\alpha\geq 0$ and $k\in\Z_{\geq 0}$, let $\bar{f}_{\alpha,k}\in C^{\omega}(s_0^m,+\infty)$ be given by
\[\bar{f}_{\alpha,k}(\theta)=\frac{1}{\theta^{\alpha}}\int_0^{s_0\theta^{-\frac{1}{m}}}\frac{\bar{W}^{(k+1)}(\theta^{\frac{1}{m}} s)s^k}{\sqrt{1-s^m}} ds.\]
Of course, $\sqrt{2}\bar{a}_{\bar{\xi}}=\bar{f}_{\frac{1}{2}-\frac{1}{m},0}$.

By a standard chain-rule argument, for any $\alpha\geq 0$, $k\in\Z_{\geq 0}$, and $\gamma\in\R$, if $W^{(k+1)}(s_0)=\gamma \bar{W}^{(k+1)}(s_0)$, then
\begin{equation}\label{eq:Dak1}
D_{\alpha,k}(\gamma\bar{f}_{\alpha,k})=\gamma\bar{f}_{1-\frac{1}{m},k+1}.
\end{equation}

\begin{remark}\label{rem:exN}
For any real $\gamma$, we denote by $\gamma:\R\to\R$ the linear map $\gamma(x)=\gamma\cdot x$.
Then, for every $\gamma\neq 0$, we have $V\circ \gamma\in\mathcal{UM}$. Obviously, $\bar{V}=V\circ(-1)$.

Suppose that $\bar{V}\neq V\circ \gamma $. It follows that $\bar{V}_*\neq V_*\circ \gamma $, and hence $W\neq \gamma \bar{W}$. Since both $\bar{W}$ and $W$ are analytic,
there exists $N\in\N$ such that
\begin{equation}\label{def:N}
W^{(k)}(s_0)=\gamma \bar{W}^{(k)}(s_0)\text{ for all $1\leq k<N$ and }W^{(N)}(s_0)\neq \gamma \bar{W}^{(N)}(s_0).
\end{equation}
Indeed, otherwise ${W}^{(k)}(s_0)=\gamma \bar W^{(k)}(s_0)$ for all $k\geq 1$, and hence $W-\gamma \bar{W}$ is constant.
As $W(0)=\bar{W}(0)=0$, this gives $W=\gamma \bar{W}$ and a contradiction.
\end{remark}

Suppose that $\gamma\in\R$ and $N\geq 2$ satisfy \eqref{def:N}. In view of \eqref{eq:Dak1},  we have
\[D_{\frac{1}{2}-\frac{1}{m},0}(\gamma\bar{f}_{\frac{1}{2}-\frac{1}{m},0})=\gamma\bar{f}_{1-\frac{1}{m},1}
\text{ and }D_{1-\frac{1}{m},k}(\gamma\bar{f}_{1-\frac{1}{m},k})=\gamma\bar{f}_{1-\frac{1}{m},k+1}\]
for all $1\leq k\leq N-2$. Moreover, by \eqref{eq:Dak},  we have
\[D_{\frac{1}{2}-\frac{1}{m},0}({f}_{\frac{1}{2}-\frac{1}{m},0})={f}_{1-\frac{1}{m},1}
\text{ and }D_{1-\frac{1}{m},k}({f}_{1-\frac{1}{m},k})={f}_{1-\frac{1}{m},k+1}\]
for all $1\leq k\leq N-2$. Hence, by \eqref{eq:diffDs},
\begin{align}\label{eq:indf}
\begin{aligned}
{f}_{1-\frac{1}{m},1}-\gamma\bar{f}_{1-\frac{1}{m},1}&=
D_{\frac{1}{2}-\frac{1}{m}}({f}_{\frac{1}{2}-\frac{1}{m},0}-\gamma\bar{f}_{\frac{1}{2}-\frac{1}{m},0})\\
{f}_{1-\frac{1}{m},k+1}-\gamma\bar{f}_{1-\frac{1}{m},k+1}&=
D_{1-\frac{1}{m}}({f}_{1-\frac{1}{m},k}-\gamma\bar{f}_{1-\frac{1}{m},k})\text{ for }1\leq k\leq N-2.
\end{aligned}
\end{align}

For any $n\in\Z_{\geq 0}$ and $\xi>0$, let $D^{(n)}:C^\omega(V(\xi),+\infty)\to C^\omega(V(\xi),+\infty)$ be a linear operator given by
\[D^{(n)}:=\left\{
\begin{array}{rcl}
\underbrace{D_{1-\frac{1}{m}}\circ\ldots\circ D_{1-\frac{1}{m}}}_{n-1\text{ times}}\circ D_{\frac{1}{2}-\frac{1}{m}}&\text{if}& n\geq 1\\
Id &\text{if}& n=0.
\end{array}
\right.\]

The following technical result shows the behavior of the analytic maps $a_{\xi},\bar{a}_{\bar{\xi}}:(V(\xi),  +\infty)\to\R_{>0}$ at the end of their domain. It is crucial in proving Proposition~\ref{prop:indep0} regarding independence.

\begin{proposition}\label{prop:gamma}
Suppose that $\gamma$ is a real number such that $\bar{V} \neq V\circ \gamma$ and let $N\geq 1$ be defined by \eqref{def:N}. Then
\[\lim_{\theta\searrow V(\xi)}D^{(N)}(a_{\xi}-\gamma \bar{a}_{\bar{\xi}})(\theta)=\pm\infty.\]
\end{proposition}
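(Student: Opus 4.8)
The plan is to combine the two recursion lemmas just proved (Lemmas~\ref{lem:fgamma} and~\ref{lem:fbar}) with the defining property \eqref{def:N} of $N$, and then read off the behavior of the resulting integral near $\theta=V(\xi)=s_0^m$. Recall $\sqrt2 a_\xi=f_{\frac12-\frac1m,0}$ and $\sqrt2\bar a_{\bar\xi}=\bar f_{\frac12-\frac1m,0}$. Since \eqref{def:N} gives $W^{(k+1)}(s_0)=\gamma\bar W^{(k+1)}(s_0)$ for $0\le k\le N-2$, the affine operators $D_{\frac12-\frac1m,0},D_{1-\frac1m,1},\dots,D_{1-\frac1m,N-2}$ are, on the relevant functions, exactly the operators appearing in Lemma~\ref{lem:fbar} for $\gamma\bar f$ and in Lemma~\ref{lem:fgamma} for $f$. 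Iterating \eqref{eq:indf} therefore gives
\[
\sqrt2\,D^{(N-1)}(a_\xi-\gamma\bar a_{\bar\xi})=f_{1-\frac1m,N-1}-\gamma\bar f_{1-\frac1m,N-1}.
\]
The point of peeling off exactly $N-1$ derivatives is that the inhomogeneous boundary terms in $D_{\alpha,k}$ for $f$ and for $\gamma\bar f$ cancel at each stage (because $W^{(k+1)}(s_0)=\gamma\bar W^{(k+1)}(s_0)$ there), so the difference stays a genuine difference of integrals with no boundary singularity yet.

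Next I would apply one more derivative, $D^{(N)}=D_{1-\frac1m}\circ D^{(N-1)}$, and use the computation inside the proof of Lemma~\ref{lem:fgamma}: for any $g=h_{\alpha,k}$ with integrand kernel $H^{(k+1)}$,
\[
\tfrac1m D_\alpha(g)(\theta)=-\tfrac1m\frac{s_0^{k+1}}{\theta^{\frac{k+1}{m}+\frac12}}\frac{H^{(k+1)}(s_0)}{\sqrt{\theta-s_0^m}}+\tfrac1m\frac{1}{\theta^{1-\frac1m}}\int_0^{s_0\theta^{-1/m}}\frac{H^{(k+2)}(\theta^{1/m}s)s^{k+1}}{\sqrt{1-s^m}}\,ds.
\]
Applying this to $g=f_{1-\frac1m,N-1}-\gamma\bar f_{1-\frac1m,N-1}$ (so $H^{(N)}=W^{(N)}-\gamma\bar W^{(N)}$ for the boundary term), the leftover integral term stays bounded as $\theta\searrow s_0^m$ (its upper limit $s_0\theta^{-1/m}\to1^-$ and $\int_0^1(1-s^m)^{-1/2}ds<\infty$, while $W^{(N+1)},\bar W^{(N+1)}$ are continuous near $s_0$), but the boundary term is
\[
-\frac{1}{m}\,\frac{s_0^{N}}{\theta^{\frac{N}{m}+\frac12}}\,\frac{W^{(N)}(s_0)-\gamma\bar W^{(N)}(s_0)}{\sqrt{\theta-s_0^m}},
\]
whose coefficient $W^{(N)}(s_0)-\gamma\bar W^{(N)}(s_0)$ is nonzero by \eqref{def:N}. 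Hence this term blows up like $(\theta-s_0^m)^{-1/2}$ with a fixed sign (the sign of $-\,(W^{(N)}(s_0)-\gamma\bar W^{(N)}(s_0))$, since $s_0>0$ and $\theta^{N/m+1/2}>0$), so the whole expression tends to $+\infty$ or $-\infty$, giving the claim. I should also treat the edge case $N=1$ separately: then there is no peeling step, $D^{(1)}=D_{\frac12-\frac1m}$, and one applies the Lemma~\ref{lem:fgamma}-style computation directly to $f_{\frac12-\frac1m,0}-\gamma\bar f_{\frac12-\frac1m,0}$, with boundary coefficient $W'(s_0)-\gamma\bar W'(s_0)\ne0$.

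The main obstacle is purely bookkeeping: making sure that at every intermediate stage the operator actually applied ($D_{\alpha,k}$ with the correct $\alpha$ and $k$) matches the one whose recursion Lemmas~\ref{lem:fgamma}--\ref{lem:fbar} compute, and that subtracting the $f$-recursion from the $\gamma\bar f$-recursion via \eqref{eq:diffDs} cleanly removes the inhomogeneous term precisely because of the equalities $W^{(k+1)}(s_0)=\gamma\bar W^{(k+1)}(s_0)$ in \eqref{def:N} — which is exactly why the singularity is postponed to the $N$-th differentiation and not earlier. The only genuinely analytic (as opposed to formal) input is the uniform boundedness of the surviving integral term as $\theta\searrow s_0^m$, which follows from continuity of $W^{(N+1)}$ and $\bar W^{(N+1)}$ on a neighborhood of $s_0$ together with the integrability of $(1-s^m)^{-1/2}$ on $[0,1]$; I would state this as a one-line estimate rather than belabor it.
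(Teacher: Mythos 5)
Your proposal is correct and follows essentially the same route as the paper: peel off $N-1$ operators via \eqref{eq:indf} (where the boundary terms cancel thanks to \eqref{def:N}), then observe that the $N$-th application of $D_\alpha$ produces the non-cancelling singular term $\tfrac{s_0^{N}}{\theta^{N/m+1/2}}\tfrac{\gamma\bar W^{(N)}(s_0)-W^{(N)}(s_0)}{\sqrt{\theta-V(\xi)}}$ while the remaining integral terms have finite limits as $\theta\searrow V(\xi)$. The sign determination and the treatment of the $N=1$ case also match the paper's argument.
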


\begin{proof}
{
Since $\sqrt{2}{a}_{\xi}={f}_{\frac{1}{2}-\frac{1}{m},0}$ and $\sqrt{2}\bar{a}_{\bar{\xi}}=\bar{f}_{\frac{1}{2}-\frac{1}{m},0}$,
by \eqref{eq:indf}, \eqref{eq:Dak}, and~\eqref{eq:Dak1}, we have
%

\begin{align*}
\sqrt{2}D^{(N)}(a_{\xi}-\gamma \bar{a}_{\bar{\xi}})=\frac{s^{N}_0}{\theta^{\frac{N}{m}+\frac{1}{2}}}\frac{\gamma \bar{W}^{(N)}(s_0)-W^{(N)}(s_0)}{\sqrt{\theta-V(\xi)}}+({f}_{1-\frac{1}{m},N}(\theta)-\gamma\bar{f}_{1-\frac{1}{m},N}(\theta)).
\end{align*}
Since the right limits of ${f}_{1-\frac{1}{m},N}$ and $\bar{f}_{1-\frac{1}{m},N}$ at $V(\xi)$ are finite, we obtain}
\[\lim_{\theta\searrow V(\xi)}D^{(N)}(a_{\xi}-\gamma \bar{a}_{\bar{\xi}})(\theta)=
\left\{
\begin{array}{rcl}
+\infty&\text{if}& \gamma \bar{W}^{(N)}(s_0)>W^{(N)}(s_0);\\
-\infty&\text{if}& \gamma \bar{W}^{(N)}(s_0)<{W}^{(N)}(s_0).
\end{array}
\right.\]
\end{proof}

The following simple lemma says that for non-even potentials the assumption $\bar{V} \neq V\circ \gamma$ is always satisfied, except for the obvious case $\gamma=-1$. If $V$ is even, the additional exceptional case $\gamma=1$ must also be excluded.

\begin{lemma}\label{lem:gamma}
If $V$ is not even, then $\bar{V} \neq V\circ \gamma$ for every $\gamma\neq -1$.
\end{lemma}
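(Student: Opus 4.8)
The plan is to argue by contradiction: suppose $V$ is not even and yet $\bar V = V\circ\gamma$ for some $\gamma\neq -1$. Writing this out, $V(-y) = V(\gamma y)$ for all $y\in\R$, so $V$ is invariant under the linear substitution $y\mapsto -\gamma^{-1}y$ (equivalently, $V(\gamma y)=V(-y)$ composed with itself gives $V(\gamma^2 y) = V(y)$). First I would pin down $\gamma$ using the behaviour at the unique minimum $0$ and the degree $m=\deg(V,0)$. Since $V(y)=c y^m + o(y^m)$ as $y\to 0$ with $c=V^{(m)}(0)/m!\neq 0$ and $m$ even, comparing the leading Taylor coefficients of $V(-y)$ and $V(\gamma y)$ at $0$ forces $c(-1)^m = c\gamma^m$, i.e.\ $\gamma^m = 1$; as $\gamma\in\R$ and $m$ is even this gives $\gamma=\pm 1$. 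By hypothesis $\gamma\neq -1$, so $\gamma = 1$, and then $\bar V = V\circ\gamma = V$ means precisely $V(-y)=V(y)$ for all $y$, i.e.\ $V$ is even — contradicting the assumption. This is the whole argument, and it is genuinely short.

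The one point that needs a word of care is the use of analyticity (or of the $\mathcal{UM}$ structure) to justify that equality of the two analytic functions $\bar V$ and $V\circ\gamma$ on $\R$ forces equality of all Taylor coefficients at $0$; this is immediate since both sides are real-analytic and agree on a set with an accumulation point, so in fact they agree as power series at $0$, which is all I used. Alternatively, and perhaps more cleanly for the write-up, I can avoid Taylor expansions entirely: from $V(-y) = V(\gamma y)$ and $\lim_{y\to\pm\infty}V(y)=+\infty$ together with strict monotonicity of $V$ on each of $(-\infty,0]$ and $[0,+\infty)$ (a consequence of \eqref{eq:i}), one sees that the substitution $y\mapsto \gamma y$ maps the decreasing branch $(-\infty,0]$ onto one of the two monotone branches; matching the growth at infinity and the fact that $0$ is the only minimum forces $\gamma y$ to preserve the origin and to send $(-\infty,0]$ to $[0,+\infty)$, hence $\gamma<0$, and then evaluating $V'$ via $V'(-y)\cdot(-1) = V'(\gamma y)\cdot\gamma$ and letting $y\to 0$ with the vanishing of lower derivatives pins $|\gamma|=1$.

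I do not expect any real obstacle here; the ``main'' step is simply the observation that $\gamma^m=1$ over $\R$ with $m$ even leaves only $\gamma=\pm 1$, and the hypothesis kills the exceptional value $-1$, so the surviving case $\gamma=1$ is exactly evenness. I would present the Taylor-coefficient version as the primary argument (it is the shortest and uses only real-analyticity, already invoked throughout Section~\ref{sec:propertiesa}), and remark parenthetically that the companion statement — for even $V$ one must also exclude $\gamma=1$ — is immediate from the same computation, matching the discussion in Remark~\ref{rem:exN}.
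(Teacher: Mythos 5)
Your proof is correct, but it takes a genuinely different route from the paper's. You pin down $\gamma$ by comparing the leading Taylor coefficients of $\bar V$ and $V\circ\gamma$ at the minimum: since $V(y)=cy^m+o(y^m)$ with $c\neq 0$ and $m=\deg(V,0)$ even, the identity $V(-y)=V(\gamma y)$ forces $c=c\gamma^m$, hence $\gamma^m=1$ and $\gamma=\pm1$; excluding $\gamma=-1$ leaves $\gamma=1$, i.e.\ evenness. The paper instead rewrites the hypothesis as $V(x)=V((-\gamma)x)$ and iterates: if $|\gamma|\neq 1$ then $(-\gamma)^{\pm n}x\to 0$, so $V(x)=V(0)=0$ for all $x$, contradicting $V(x)>0$ for $x\neq 0$; this again yields $|\gamma|=1$ and then $\gamma=1$. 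Your version leans on analyticity and the evenness of the degree $m$ (both available in the $\mathcal{UM}$ setting and used freely elsewhere in Section~\ref{sec:propertiesa}), whereas the paper's iteration argument needs only continuity at $0$ and the strict positivity of $V$ off the origin, so it would survive in a merely continuous unimodal setting. Both are complete; your parenthetical remark that the same computation also rules out $\gamma=1$ for even $V$ matches the discussion around Remark~\ref{rem:exN}.
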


\begin{proof}
Suppose, contrary to our claim, that for some $\gamma\neq -1$, we have $V(\gamma x)=V(-x)$ for all $x\in\R$.
If $|\gamma|<1$, then
\[V(x)=V((-\gamma)^nx)\to V(0)=0\text{ as }n\to+\infty,\]
and
if $|\gamma|>1$, then
\[V(x)=V((-\gamma)^{-n}x)\to V(0)=0\text{ as }n\to+\infty\]
for every $x\in\R$. This gives $|\gamma|=1$.
As $\gamma\neq -1$, it follows that $\gamma=1$. Hence, $V(-x)=V(\gamma x)$ contradicts the
assumption that $V$ is not even.
\end{proof}

Suppose that $V_1,V_2:\R\to\R_{\geq 0}$ are $\mathcal{UM}$-potentials. Recall that the corresponding (reflected)
maps $\bar{V}_1,\bar{V}_2$ are also in $\mathcal{UM}$.
The following key result states a kind of independence for finite families of functions of the form $a$, $a_x$, $b$, $b_y$, $\bar{a}$, $\bar{a}_x$, $\bar{b}$, $\bar{b}_y$  in the case of non-even potentials. In the case of even potentials, we must restrict ourselves to families of functions of the form $a$, $a_x$, $b$, $b_y$  because $\bar{a}=a$, $\bar{a}_x=a_x$, $\bar{b}=b$, $\bar{b}_y=b_y$. This result is crucial in proving Theorem~\ref{thm:main1} in Section~\ref{sec:appl}.

{ Although the following result is formulated and proved for non-even potentials, an analogous version is also valid when at least one of the potentials is even. In that case, the statement and the proof are essentially the same and, in fact, slightly simpler. For the sake of clarity, however, we discuss the even case only briefly in a remark following the proof.}

\begin{proposition}\label{prop:indep0}
Let  $V_1,V_2:\R\to\R_{\geq 0}$ be {non-even} $\mathcal{UM}$-potentials.
Let $m_1=\deg (V_1,0)$ and $m_2=\deg (V_2,0)$.
Fix an energy level $E>0$.
Assume that $K,L,\bar{K},\bar{L}\in\Z_{\geq 0}$ and
\begin{equation}\label{eq:seqxy}
0<x_1<\!\ldots\!<x_K,\ 0<y_1<\!\ldots\!<y_L,\
0<\bar{x}_1<\!\ldots\!<\bar{x}_{\bar{K}},\ 0<\bar{y}_1<\!\ldots\!<\bar{y}_{\bar{L}}
\end{equation}
 are such that
\[v_1+v_2<E,\text{ where }v_1=\max\{V_1(x_K),\bar{V}_1(\bar{x}_{\bar{K}})\}, v_2=\max\{V_2(y_L),\bar{V}_2(\bar{y}_{\bar{L}})\}.\]
Let $\Delta_0$ be the finite subset of $C([v_1,E-v_2])\cap C^{\omega}(v_1,E-v_2)$  consisting of
\begin{gather*}
 a_{x_i}\text{ for }1\leq i\leq K,\quad \bar{a}_{\bar{x}_i}\text{ for }1\leq i\leq \bar{K},\quad b_{y_i}\text{ for }1\leq i\leq L,\quad \bar{b}_{\bar{y}_i}\text{ for }1\leq i\leq \bar{L}.
\end{gather*}
Denote by $\Delta$ the set arising from $\Delta_0$ by extending it by $a$, $b$, $\bar{a}$ and $\bar{b}$.

Let $\gamma_\delta$ for $\delta\in\Delta$ be real coefficients, not all zero such that $\gamma_a\cdot\gamma_{\bar{a}}\geq 0$ and $\gamma_b\cdot\gamma_{\bar{b}}\geq 0$.
If at least one $\gamma_{\delta}$ is non-zero for some $\delta\in\Delta_0$, then
\begin{equation}\label{neq:ga1}
\sum_{\delta\in \Delta}\gamma_\delta\delta(\theta)\neq 0\text{ for all but countably many }\theta\in (v_1,E-v_2).
\end{equation}
Suppose that at least one $\gamma_a$, $\gamma_{\bar{a}}$, $\gamma_b$, $\gamma_{\bar{b}}$ is non-zero with  $\gamma_a\cdot\gamma_{\bar{a}}\geq 0$ and $\gamma_b\cdot\gamma_{\bar{b}}\geq 0$. If $m_1>2$ or $m_2>2$,
then
\begin{equation}\label{eq:ga2}
\gamma_a a(\theta)+\gamma_{\bar{a}}\bar{a}(\theta)+\gamma_b b(\theta)+\gamma_{\bar{b}}\bar{b}(\theta)\neq 0\text{ for all but finitely many }\theta\in (v_1,E-v_2).
\end{equation}
\end{proposition}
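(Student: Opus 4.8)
The plan is to prove the two assertions separately, in both cases by extracting a suitable linear differential operator that detects a singularity at an endpoint of $(v_1,E-v_2)$. An analytic function on an interval that is not identically zero vanishes only on a discrete (hence countable, and inside any compact subinterval finite) set, so it suffices in each case to show that the relevant linear combination is not the zero function; we will do this by applying an operator built from the $D_\alpha$'s of Section~\ref{sec:propertiesa} and showing the result blows up at an endpoint, which is impossible for the zero function. Throughout we may harmlessly rescale by $\sqrt2$ and work with the $f_{\alpha,k}$, $\bar f_{\alpha,k}$ representations.

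First I would prove \eqref{neq:ga1}. Suppose $\gamma_{\delta_0}\neq 0$ for some $\delta_0\in\Delta_0$; say $\delta_0=a_{x_i}$ for the largest index $i$ with $\gamma_{a_{x_i}}\neq 0$ among the ``$a$-type'' functions, and similarly single out the largest surviving $\bar a_{\bar x_j}$, $b_{y_k}$, $\bar b_{\bar y_\ell}$ (one of these four families is the one containing $\delta_0$; I will explain the $a$-case, the others being identical after swapping roles of $V_1,V_2$ and of $V,\bar V$). The key observation is that the functions $a_{x}$ are continuous up to $\theta=V_1(x)$ while $a$, $b$, $\bar b$, all other $a_{x'}$ with $V_1(x')<V_1(x_i)$, and all $b_{y},\bar b_{\bar y}$ are analytic past that point. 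So I evaluate the endpoint behaviour at $\theta\searrow V_1(x_K)$ if the top-index survivor is $a_{x_K}$ — more carefully, I should order all the thresholds $V_1(x_i),\bar V_1(\bar x_j)$ and pick the largest one, say $V_1(x_K)$, at which some coefficient among $\{\gamma_{a_{x_i}},\gamma_{\bar a_{\bar x_j}}\}$ with threshold equal to $V_1(x_K)$ is nonzero; then only those finitely many $a_{x_i},\bar a_{\bar x_j}$ with this maximal threshold are non-analytic there, while $a$, $b$, $\bar b$, the $b_y$, $\bar b_y$ and the lower-threshold $a_{x'},\bar a_{\bar x'}$ extend analytically. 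Applying the operator $D^{(N)}$ for the appropriate $N=N(x_K)$ (or its $\bar V$-analogue) from Proposition~\ref{prop:gamma}: when $\bar V_1\neq V_1\circ\gamma$ for the relevant ratio $\gamma$, Proposition~\ref{prop:gamma} gives that $D^{(N)}(a_{x_K}-\gamma\bar a_{\bar x})$ tends to $\pm\infty$; more precisely, a single such $a_{x_K}$ contributes a term $s_0^N\theta^{-N/m-1/2}W^{(N)}(s_0)/\sqrt{\theta-V_1(x_K)}$ with a finite remainder. Summing the contributions of all top-threshold terms, the divergent part is
\begin{equation*}
\frac{1}{\sqrt{\theta-V_1(x_K)}}\sum \gamma_{a_{x_i}}c_i+\frac{1}{\sqrt{\theta-\bar V_1(\bar x_j)}}\sum\gamma_{\bar a_{\bar x_j}}c_j',
\end{equation*}
with nonzero $c$'s; if these two square-root singularities are distinct (different thresholds) at least one survives; if they coincide, the condition $\gamma_a\gamma_{\bar a}\geq 0$ together with the sign analysis (all $W^{(N)}(s_0),\bar W^{(N)}(s_0)$ being the same sign by our choice of $V_*$ with positive derivative — I'd have to check this carefully) forces no cancellation. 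Hence $D^{(N)}\big(\sum_\delta\gamma_\delta\delta\big)$ is unbounded near $V_1(x_K)$, so $\sum_\delta\gamma_\delta\delta$ is a nonzero analytic function, proving \eqref{neq:ga1}.

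For \eqref{eq:ga2}, now all $\gamma_\delta=0$ for $\delta\in\Delta_0$, and I work with $\gamma_a a+\gamma_{\bar a}\bar a+\gamma_b b+\gamma_{\bar b}\bar b$ under $m_1>2$ or $m_2>2$. Here the relevant endpoint is $\theta\searrow v_1=0$ (since $v_1=v_2=0$ once $K=L=\bar K=\bar L=0$, the domain is $(0,E)$) for the $a$-terms, and $\theta\nearrow E$ for the $b$-terms. By \eqref{eq:limfin}, $\theta^{1/2-1/m_1}a(\theta)\to W_1'(0)/\sqrt2\cdot\int_0^1(1-s^{m_1})^{-1/2}ds>0$, and likewise for $\bar a$ with $\bar W_1'(0)=W_1'(0)$; so if $m_1>2$ the combination $\gamma_a a+\gamma_{\bar a}\bar a$ behaves like $(\gamma_a+\gamma_{\bar a})\cdot\mathrm{const}\cdot\theta^{1/m_1-1/2}$, which blows up as $\theta\searrow0$ provided $\gamma_a+\gamma_{\bar a}\neq0$ — and $\gamma_a\gamma_{\bar a}\geq0$ with not both zero forces $\gamma_a+\gamma_{\bar a}\neq0$. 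The $b$-terms are bounded near $0$ (their domain is $[0,E)$ and they are continuous there), so the whole expression diverges at $0$, giving \eqref{eq:ga2}. If instead $\gamma_a=\gamma_{\bar a}=0$, then $\gamma_b$ or $\gamma_{\bar b}$ is nonzero, $m_2>2$ must hold (if $m_1>2$ but the $a$-coefficients vanish we still need $m_2>2$ — actually the hypothesis is $m_1>2$ \emph{or} $m_2>2$, so I need: if the surviving coefficients are among $\gamma_b,\gamma_{\bar b}$ then necessarily it is $m_2>2$ that I use, by the symmetric version of \eqref{eq:limfin} for $b,\bar b$ near $\theta=E$). The one genuinely delicate point is the case $m_1=m_2=2$ with, say, $\gamma_a,\gamma_b$ the nonzero ones — but then the hypothesis $m_1>2$ or $m_2>2$ fails, so this case is excluded and \eqref{eq:ga2} need not be proven there.

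The main obstacle I anticipate is the bookkeeping in \eqref{neq:ga1}: correctly identifying the maximal threshold among the mixed family $\{V_1(x_i)\}\cup\{\bar V_1(\bar x_j)\}$, verifying that at that threshold the coefficients attached to the relevant $\gamma$-ratio satisfy the hypothesis $\bar V_1\neq V_1\circ\gamma$ of Proposition~\ref{prop:gamma} (invoking Lemma~\ref{lem:gamma} to handle $\gamma=-1$ and the even-potential exclusions), and ruling out cancellation between the $a_x$-singularity and a possibly coincident $\bar a_{\bar x}$-singularity. This sign/non-cancellation step is where the assumptions $\gamma_a\gamma_{\bar a}\geq0$, $\gamma_b\gamma_{\bar b}\geq0$ and the normalization of $V_*$ are essential, and it is the part that will require the most careful argument; everything else is a routine application of the operators $D^{(N)}$ together with the finiteness of the remainder integrals established in the proof of Proposition~\ref{prop:gamma}.
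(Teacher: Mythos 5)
Your overall strategy is the paper's: for \eqref{neq:ga1}, isolate the largest threshold carrying a non-zero coefficient and apply the operators $D^{(N)}$ together with Proposition~\ref{prop:gamma} to produce an endpoint singularity that the remaining (analytic) terms cannot match; for \eqref{eq:ga2}, use the $\theta^{1/2-1/m_1}$ scaling limit at $\theta=0$. Two details, however, are wrong as written.

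First, in the coincident-threshold case $V_1(x_K)=\bar{V}_1(\bar{x}_{\bar{K}})=v_1$ you claim that the hypothesis $\gamma_a\cdot\gamma_{\bar{a}}\geq 0$ rules out cancellation between the two singular contributions. It cannot: that hypothesis constrains only the coefficients of the \emph{marginal} functions $a,\bar{a}$, whereas $a_{x_K}$ and $\bar{a}_{\bar{x}_{\bar{K}}}$ belong to $\Delta_0$, whose coefficients carry no sign restriction whatsoever. The actual mechanism is also the reverse of what you describe for Lemma~\ref{lem:gamma}. Setting $\gamma=-\gamma_{\bar{a}_{\bar{x}_{\bar{K}}}}/\gamma_{a_{x_K}}$, Lemma~\ref{lem:gamma} covers every $\gamma\neq -1$: non-evenness of $V_1$ gives $\bar{V}_1\neq V_1\circ\gamma$, Remark~\ref{rem:exN} produces an order $N$ at which $W_1^{(N)}(s_0)\neq\gamma\bar{W}_1^{(N)}(s_0)$, and Proposition~\ref{prop:gamma} then forces $D^{(N)}(a_{x_K}-\gamma\bar{a}_{\bar{x}_{\bar{K}}})\to\pm\infty$, even if the first-order singular parts cancel. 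The one case the lemma does \emph{not} cover, $\gamma=-1$ (i.e.\ equal coefficients), must be handled separately, by observing that $D^{(1)}(a_{x_K})$ and $D^{(1)}(\bar{a}_{\bar{x}_{\bar{K}}})$ both tend to $-\infty$, so their sum diverges. No sign hypothesis on the $\Delta_0$-coefficients is used (or available) in this part of the argument.

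Second, in your treatment of \eqref{eq:ga2} you assert that if $\gamma_a=\gamma_{\bar{a}}=0$ then ``$m_2>2$ must hold.'' That is false: the hypothesis is $m_1>2$ \emph{or} $m_2>2$, so the case $m_1>2$, $m_2=2$ with only $\gamma_b,\gamma_{\bar{b}}$ surviving is admissible, and your divergence-at-$E$ argument is unavailable there. The fix is immediate and is what the paper does: once $\gamma_a=\gamma_{\bar{a}}=0$, the expression reduces to $\gamma_b b+\gamma_{\bar{b}}\bar{b}$ with $b,\bar{b}>0$ on $(0,E)$ and $\gamma_b,\gamma_{\bar{b}}$ of the same sign, not both zero, so it never vanishes --- no degree assumption on $V_2$ is needed. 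With these two repairs your outline coincides with the paper's proof.
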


{
\begin{remark}
As we have already mentioned, \eqref{neq:ga1} and \eqref{eq:ga2} are conditions resembling independence of functions  appearing in these sums, and the outlines of their proofs by contradiction are quite simple. First, assuming that \eqref{neq:ga1} and \eqref{eq:ga2} are not satisfied, using analyticity we obtain equalities on entire intervals on which the functions are well defined. Next, we identify the left endpoint of the interval, which plays a key role in reaching a contradiction. For \eqref{neq:ga1}, we find one or two functions whose derivatives $D^{(n)}$ have singularities at this point. The remaining functions are regular there. Using these singularities and Proposition~\ref{prop:gamma}, we obtain a trivialization of the resulting equality. For \eqref{eq:ga2}, the left endpoint is zero, which is a singular point for the functions $a$ and $\bar{a}$, and regular for $b$ and $\bar{b}$.
\end{remark}}
\begin{proof}[Proof of Proposition~\ref{prop:indep0}]
{Since the proof is technical in nature, for greater clarity we divide it into parts and cases.}

\medskip

\noindent
\textbf{Part 1.} Contrary to our claim, suppose \eqref{neq:ga1} does not hold.
Since all $\delta\in \Delta$ are analytic on $(v_1,E-v_2)$ and continuous on $[v_1,E-v_2]$,
we have
\begin{equation}\label{eq:gamma}
\sum_{\delta\in \Delta}\gamma_\delta\delta(\theta)= 0\ \text{ for all  }\ \theta\in [v_1,E-v_2].
\end{equation}
Assume that at least one $\gamma_\delta$  is non-zero for some $\delta\in\Delta_0$.
Without loss of generality, we can assume that:
\begin{align*}
&\gamma_{a_{x_K}}\neq 0\text{ if }V_1({x_K})=v_1\text{, and }\gamma_{\bar{a}_{\bar{x}_{\bar{K}}}}\neq 0\text{ if }\bar{V}_1({\bar{x}_{\bar{K}}})=v_1\text{, and }\\
&\gamma_{b_{y_L}}\neq 0\text{ if }{V}_2({y_L})=v_2\text{, and }\gamma_{\bar{b}_{\bar{y}_{\bar{L}}}}\neq 0\text{ if }\bar{V}_2({\bar{y}_{\bar{L}}})=v_2.
\end{align*}
Otherwise, one can artificially truncate the sequences in \eqref{eq:seqxy}.

Suppose that $K\geq 1$ and $V_1({x_K})=v_1$, so $\gamma_{a_{x_K}}\neq 0$. In other cases (i.e., $\bar{V}_1({\bar{x}_{\bar{K}}})=v_1$ or ${V}_2({y_L})=v_2$ or $\bar{V}_2({\bar{y}_{\bar{L}}})=v_2$), the proof is the same, so we skip it.

\noindent
\textbf{Case 1.} Suppose that $\bar{V}_1({\bar{x}_{\bar{K}}})<v_1=V_1({x_K})$.
Then
\[a_{x_K}(\theta)=-\sum_{\delta\in\Delta\setminus \{a_{x_K}\}}\frac{\gamma_\delta}{\gamma_{a_{x_K}}}\delta(\theta)\text{ for all }
\theta \in (v_1,E-v_2).\]
In view of \eqref{eq:analytic}, all $\delta\in\Delta\setminus \{a_{x_K}\}$ are also analytic on $(\tilde{v}_1,E-v_2)$,
where
\[\tilde{v}_1=\max\{V_1(x_{K-1}),\bar{V}_1(\bar{x}_{\bar{K}})\}<v_1.\]
Since for every $\alpha\geq 0$, the operator
$D_\alpha$ maps $C^{\omega}(\tilde{v}_1,E-v_2)$ to $C^{\omega}(\tilde{v}_1,E-v_2)$, it follows that the limit $\lim_{\theta\searrow v_1}D^{(n)}(\delta)(\theta)$ exists and is finite for every $\delta\in\Delta\setminus\{a_{x_K}\}$ and $n\geq 1$. Hence,
\begin{equation}\label{eq:limf}
\text{the limit }\lim_{\theta\searrow v_1}D^{(n)}(a_{x_K})(\theta)\text{ exists and is finite for every $n\geq 1$.}
\end{equation}
As $W_1'((V_1)_*(x_K))>0$, we can apply
Proposition~\ref{prop:gamma} to $V=V_1$ and $\gamma=0$. Since $V_1(x_K)=v_1$, we obtain
\begin{equation}\label{eq:-inf}
\lim_{\theta\searrow v_1}D^{(1)}(a_{x_K})(\theta)=-\infty,
\end{equation}
contrary to \eqref{eq:limf}.

\noindent
\textbf{Case 2.}  Suppose that $\bar{V}_1({\bar{x}_{\bar{K}}})=v_1=V_1({x_K})$.
Then
\[a_{x_K}(\theta)-\gamma\bar{a}_{\bar{x}_{\bar{K}}}(\theta)=-\sum_{\delta\in\Delta\setminus \{a_{x_K},\bar{a}_{\bar{x}_{\bar{K}}}\}}\frac{\gamma_\delta}{\gamma_{a_{x_K}}}\delta(\theta)\text{ for all }
\theta \in (v_1,E-v_2),\]
where $\gamma=-\tfrac{\gamma_{\bar{a}_{\bar{x}_{\bar{K}}}}}{\gamma_{a_{x_K}}}$.
Moreover,  all $\delta\in\Delta\setminus \{a_{x_K},\bar{a}_{\bar{x}_{\bar{K}}}\}$ are also analytic on $(\tilde{v}_1,E-v_2)$,
where $\tilde{v}_1=\max\{V_1(x_{K-1}),\bar{V}_1(\bar{x}_{\bar{K}-1})\}<v_1$. It follows that the limit of $D^{(n)}(\delta)(\theta)$
as $\theta\searrow v_1$ exists and is finite for every $\delta\in\Delta\setminus\{a_{x_K},\bar{a}_{\bar{x}_{\bar{K}}}\}$ and $n\geq 1$. Hence,
\begin{equation}\label{eq:limf1}
\text{the limit }\lim_{\theta\searrow v_1}D^{(n)}(a_{x_K}-\gamma\bar{a}_{\bar{x}_{\bar{K}}})(\theta)\text{ exists and is finite for every $n\geq 1$.}
\end{equation}

\noindent
\textbf{Case 2.1.} Assume that $\gamma\neq -1$. Since $V_1$ is not even, by Lemma~\ref{lem:gamma} and Remark~\ref{rem:exN}, there exists $N\geq 1$
such that
\begin{equation*}
W_1^{(k)}(v^{\frac{1}{m_1}}_1)=\gamma \bar{W}_1^{(k)}(v^{\frac{1}{m_1}}_1)\text{ for all $1\leq k<N$ and }W_1^{(N)}(v^{\frac{1}{m_1}}_1)\neq \gamma \bar{W}_1^{(N)}(v^{\frac{1}{m_1}}_1).
\end{equation*}
By Proposition~\ref{prop:gamma} applied to $V=V_1$ and $s_0=v^{\frac{1}{m_1}}_1$, since $V_1(x_K)=\bar{V}_1({\bar{x}_{\bar{K}}})=v_1$, we obtain $\lim_{\theta\searrow v_1}D^{(N)}(a_{x_K}-\gamma\bar{a}_{\bar{x}_{\bar{K}}})(\theta)=\pm\infty$,
contrary to \eqref{eq:limf1}.

\noindent
\textbf{Case 2.2.} Assume that $\gamma= -1$. In view of \eqref{eq:-inf},
\[\lim_{\theta\searrow v_1}D^{(1)}(a_{x_K})(\theta)=-\infty\text{ and }\lim_{\theta\searrow v_1}D^{(1)}(\bar{a}_{\bar{x}_{\bar{K}}})(\theta)=-\infty.\]
Hence, $\lim_{\theta\searrow v_1}D^{(1)}(a_{x_K}-\gamma\bar{a}_{\bar{x}_{\bar{K}}})(\theta)=-\infty$,
contrary to \eqref{eq:limf1}.

This completes the proof of \eqref{neq:ga1}.

\medskip

\noindent
\textbf{Part 2.} Suppose that at least one $\gamma_a$, $\gamma_{\bar{a}}$, $\gamma_b$, $\gamma_{\bar{b}}$ is non-zero, $\gamma_a\cdot\gamma_{\bar{a}}\geq 0$, $\gamma_b\cdot\gamma_{\bar{b}}\geq 0$, $m_1>2$, and \eqref{eq:ga2} does not hold. Since $a$, $\bar{a}$, $b$, $\bar{b}$ are analytic on $(0,E)$, we have
\begin{equation*}
\gamma_a a(\theta)+\gamma_{\bar{a}}\bar{a}(\theta)+\gamma_b b(\theta)+\gamma_{\bar{b}}\bar{b}(\theta)= 0\text{ for all }\theta\in (0,E).
\end{equation*}
As $b$, $\bar{b}$ are continuous on $[0,E)$, the limits of $b(\theta)$ and $\bar{b}(\theta)$ as $\theta\searrow 0$ exist and are finite.
It follows that
\begin{equation*}
\lim_{\theta\searrow 0}\theta^{\frac{1}{2}-\frac{1}{m_1}}(\gamma_a a(\theta)+\gamma_{\bar{a}}\bar{a}(\theta))=
-\lim_{\theta\searrow 0}\theta^{\frac{1}{2}-\frac{1}{m_1}}(\gamma_b b(\theta)+\gamma_{\bar{b}}\bar{b}(\theta))=0.
\end{equation*}
On the other hand, by \eqref{eq:limfin} and $\bar{W}_1'(0)=W_1'(0)$,
\begin{align*}
\lim_{\theta\searrow 0}\theta^{\frac{1}{2}-\frac{1}{m_1}}(\gamma_a a(\theta)+\gamma_{\bar{a}}\bar{a}(\theta))&=
\frac{\gamma_a W_1'(0)+\gamma_{\bar{a}} \bar{W}_1'(0)}{\sqrt{2}}\int_0^{1}\frac{s\,ds}{\sqrt{1-s^{m_1}}}\\
&=\sqrt{2}(\gamma_a +\gamma_{\bar{a}}){W}_1'(0)\int_0^{1}\frac{s\,ds}{\sqrt{1-s^{m_1}}}.
\end{align*}
It follows that $\gamma_a +\gamma_{\bar{a}}=0$. As  $\gamma_a$, $\gamma_{\bar{a}}$ have the same sign, we obtain $\gamma_a=\gamma_{\bar{a}}=0$.
Therefore, $\gamma_b b(\theta)+\gamma_{\bar{b}}\bar{b}(\theta)= 0$ for all $\theta\in (0,E)$. As $\gamma_b$ and $\gamma_{\bar{b}}$ have the same sign and $b(\theta)$ and $\bar{b}(\theta)$ are positive for all $\theta\in (0,E)$, this gives $\gamma_b=\gamma_{\bar{b}}=0$, and hence a contradiction.

This completes the proof of \eqref{eq:ga2}.
\end{proof}

{
\begin{remark}
Assume that the potential $V_1$ is even; the case of an even potential $V_2$ is treated analogously.
Then $\bar{V}_1=V_1$ and the vertical polygon parameters coming from the positive and negative parts have the same form; that is, $\bar{a}=a$ and $\bar{a}_x=a_{x}$. Therefore, we no longer distinguish the positive- and negative-side parameters by considering one common family of (positive) parameters. Hence, we can assume that $\bar{K}=0$, i.e., there are no parameters $\bar{x}_i$.

If $V_1$  is even, then $a=\bar{a}$ and that is why we can ignore in all sums the terms involving $\bar{a}$.  Accordingly,
we may assume that $\gamma_{\bar{a}}=0$, which makes the assumption $\gamma_{{a}}\cdot\gamma_{\bar{a}}\geq 0$ unnecessary.
%
\end{remark}}

\subsection{Degree-two potentials}\label{sec:potdeg2}
In this section, we discuss  condition \eqref{eq:ga2} in the case of $\deg(V_1,0)=\deg(V_2,0)=2$.
In this context, the special class  $\mathcal{SP}$ of potentials naturally appears.
Recall that $V:\R\to\R_{\geq 0}$ belongs to $\mathcal{SP}$ if:
\begin{itemize}
\item[($i$)] $V\in\mathcal{UM}$ with the degree $\deg(V,0)=2$ and
\item[($ii$)] the corresponding bi-analytic map $W:\R\to\R$ is of the form $W(x)=cx+f(x^2)$ or equivalently $W^{(2n+1)}(0)=0$ for $n\geq 1$.
\end{itemize}

\begin{remark}\label{rem:SP}
Notice that if $V\in \mathcal{SP}$, then
\begin{equation}\label{eq:SP}
W'(x)+\bar{W}'(x)=2W'(0) \text{ for all }x\in\R.
\end{equation}
Indeed, as $\bar{W}(x)=-W(-x)=cx-f(x^2)$, we have $W(x)+\bar{W}(x)=2cx$. Hence, $W'(x)+\bar{W}'(x)=2c=2W'(0)$.

In fact, condition ($ii$) is equivalent to the constancy of $W'+\bar{W}'$. Indeed, if $W'+\bar{W}'$ is constant,
then for every $n\geq 1$, we have
\[W^{(2n+1)}(x)=-\bar{W}^{(2n+1)}(x)=-W^{(2n+1)}(-x).\]
Hence, $W^{(2n+1)}(0)=0$ for all $n\geq 1$.

Moreover, if $V\in \mathcal{SP}$ is even, then $V$ is a quadratic map. Indeed, if $V$ is even, then
\[cx+f(x^2)=W(x)=-W(-x)=cx-f(x^2),\text{ so }f=0.\]
It follows that $V(x)=(V_*(x))^2=\big(\tfrac{x}{c}\big)^2$.
\end{remark}

\begin{definition}
Let $V_1,V_2:\R\to\R_{\geq 0}$ be $\mathcal{UM}$-potentials with $\deg(V_1,0)=\deg(V_2,0)=2$.
We denote by $\mathfrak E=\mathfrak E(V_1,V_2)\subset (0,+\infty)$ the set of all energy levels $E>0$ for which there exists an interval $I\subset (0,E)$ and integers
$\gamma_{a}$,  $\gamma_{\bar{a}}$, $\gamma_{b}$, $\gamma_{\bar{b}}$  not all zero with
$\gamma_{a}\cdot\gamma_{\bar{a}}\geq 0$ and $\gamma_{b}\cdot\gamma_{\bar{b}}\geq 0$ such that
\begin{equation}\label{eq:ga3}
\gamma_a a(\theta)+\gamma_{\bar{a}}\bar{a}(\theta)+\gamma_b b(\theta)+\gamma_{\bar{b}}\bar{b}(\theta)= 0\text{
for uncountably many  }\theta \in I.
\end{equation}
\end{definition}
As we will see in Section~\ref{sec:genequ} (see Corollary~\ref{cor:EE}), we have $\mathcal E(P,V_1,V_2)\subset \mathfrak E(V_1,V_2)$.
\begin{proposition}\label{prop:twobad}
Let $V_1,V_2:\R\to\R_{\geq 0}$ be $\mathcal{UM}$-potentials with $\deg(V_1,0)=\deg(V_2,0)=2$.
If $E\in\mathfrak{E}$ and \eqref{eq:ga3} holds, then $\gamma_{a}+\gamma_{\bar{a}}\neq 0$, $\gamma_{b}+\gamma_{\bar{b}}\neq 0$, and
\begin{equation}\label{eq:ga4}
(\gamma_a+\gamma_{\bar{a}}) (a(\theta)+\bar{a}(\theta))+(\gamma_b+\gamma_{\bar{b}})(b(\theta)+\bar{b}(\theta))= 0\text{
for all  }\theta \in (0,E).
\end{equation}
\begin{itemize}
\item[$(a)$] If $\mathfrak E$ has at least two elements, then  $V_1,V_2\in \mathcal{SP}$.
\item[$(b)$] If $E\in\mathfrak E$ and $V_1(x)=\omega V_2(\tau x)$ for some $\omega>0$ and $\tau\neq 0$, then
\[\frac{\gamma_{a}+\gamma_{\bar{a}}}{\gamma_{b}+\gamma_{\bar{b}}}=-|\tau|\sqrt{\omega}=-\sqrt{\frac{V''_1(0)}{V''_2(0)}}.\]
{\item[$(c)$] If additionally $\omega\neq 1$, then $V_1,V_2\in \mathcal{SP}$.}
\end{itemize}
\end{proposition}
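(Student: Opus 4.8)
The plan is, first, to reduce the hypothesis \eqref{eq:ga3} to the identity \eqref{eq:ga4} by a one-sided Puiseux analysis at the two endpoints of the interval; second, to settle (a) by rewriting \eqref{eq:ga4} as a closed functional equation for a single period function; and third, to settle (b) by extracting a multiplicative quasi-period and showing it is incompatible with the rigidity of period functions of degree-two potentials. Throughout write $W_j=(V_j)_*^{-1}$ and $\bar{W}_j(x)=-W_j(-x)$, and introduce the \emph{symmetrized period functions}
\[
q_j(\sigma):=\int_{-\bar{V}_j^{-1}(\sigma)}^{V_j^{-1}(\sigma)}\frac{dy}{\sqrt{2}\sqrt{\sigma-V_j(y)}}
=\frac{1}{\sqrt{2}}\int_0^1\frac{W_j'(\sqrt{\sigma}\,s)+W_j'(-\sqrt{\sigma}\,s)}{\sqrt{1-s^2}}\,ds ,
\]
the second equality being the substitution of \eqref{eq:formofa} with $m=2$; since $W_j'>0$, each $q_j$ is positive and analytic on $[0,+\infty)$, $a+\bar{a}=q_1$, and $b_E+\bar{b}_E=q_2(E-\cdot)$. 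Recall (Remark~\ref{rem:SP}, together with the displayed formula) that $V_j\in\mathcal{SP}$ iff $W_j'+\bar{W}_j'$ is constant iff $q_j$ is constant.

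\emph{Step 1 (the preamble).} Since every $\delta\in\Delta$ is analytic on $(0,E)$ and \eqref{eq:ga3} holds on a set with an accumulation point, $\gamma_a a+\gamma_{\bar{a}}\bar{a}+\gamma_b b+\gamma_{\bar{b}}\bar{b}$ vanishes identically on $(0,E)$. Expanding $W_1'(\sqrt{\theta}\,s)$ in powers of $\sqrt{\theta}$ and integrating termwise in \eqref{eq:formofa} gives $\sqrt{2}\,a(\theta)=A(\theta)+\sqrt{\theta}\,B(\theta)$ with $A,B$ analytic near $0$, and since $\bar{W}_1^{(2j+1)}(0)=W_1^{(2j+1)}(0)$, $\bar{W}_1^{(2j+2)}(0)=-W_1^{(2j+2)}(0)$, the same computation gives $\sqrt{2}\,\bar{a}(\theta)=A(\theta)-\sqrt{\theta}\,B(\theta)$, whereas $b,\bar{b}$ are analytic at $\theta=0$ (their boundary singularity is at $\theta=E$). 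Uniqueness of the Puiseux expansion of the identity near $\theta=0$ kills its $\sqrt{\theta}$-part, i.e.\ $(\gamma_a-\gamma_{\bar{a}})B\equiv 0$; and $B\not\equiv 0$ unless $V_1$ is even (as $B\equiv0$ forces $W_1$ odd, hence $V_1$ even), in which case $\bar{a}=a$ and $\gamma_a=\gamma_{\bar{a}}$ by convention. So $\gamma_a=\gamma_{\bar{a}}$; symmetrically, analyzing near $\theta=E$ (where $a,\bar{a}$ are analytic and $b_E-\bar{b}_E$ is $\sqrt{E-\theta}$ times an analytic function) gives $\gamma_b=\gamma_{\bar{b}}$. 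Thus the identity reads $\gamma_a q_1+\gamma_b q_2(E-\cdot)\equiv0$, which on doubling is \eqref{eq:ga4}; and $\gamma_a=0$ would force $\gamma_b=0$ (as $q_2>0$), hence all $\gamma$'s zero — so $\gamma_a+\gamma_{\bar{a}}=2\gamma_a\neq0$, and then $\gamma_b+\gamma_{\bar{b}}\neq0$.

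\emph{Step 2 (part (a)).} Replacing $V_2$ by $\bar{V}_2$ if necessary — which changes neither $q_2$ nor $\mathfrak{E}$ — we may assume $\tau>0$; the chain rule gives $|\tau|\sqrt{\omega}=\sqrt{V_1''(0)/V_2''(0)}$. The substitution $y\mapsto\tau y$, $\sigma=\theta/\omega$ in the integral for $q_1$ yields $q_1(\theta)=\frac{1}{\tau\sqrt{\omega}}\,q_2(\theta/\omega)$, so with $c:=\gamma_a+\gamma_{\bar{a}}$, $d:=\gamma_b+\gamma_{\bar{b}}$ (both nonzero) and $\lambda:=-d\tau\sqrt{\omega}/c$, equation \eqref{eq:ga4} becomes $q_2(x)=\lambda\,q_2(E-\omega x)$ for $x\in(0,E/\omega)$, hence on a neighbourhood of $[0,E/\omega]$ by analyticity. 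Taking $x$ small (so $E-\omega x>0$) and using $q_2>0$ gives $\lambda>0$; then an elementary iteration of this functional equation — using the attracting fixed point $x^{*}=E/(1+\omega)$ of the affine map $x\mapsto E-\omega x$ (or its inverse when $\omega>1$, or that this map is an involution when $\omega=1$) and $q_2>0$ — forces $\lambda=1$. Hence $c/d=-|\tau|\sqrt{\omega}=-\sqrt{V_1''(0)/V_2''(0)}$ (and in fact $q_2\equiv q_2(x^{*})$, so $V_1,V_2\in\mathcal{SP}$).

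\emph{Step 3 (part (b)).} Let $E_1<E_2$ both lie in $\mathfrak{E}$. By Step 1, for $i=1,2$ there are nonzero $c_i=\gamma_a^{(i)}+\gamma_{\bar{a}}^{(i)}$, $d_i=\gamma_b^{(i)}+\gamma_{\bar{b}}^{(i)}$ with $q_1(\theta)=\lambda_i\,q_2(E_i-\theta)$ on $(0,E_i)$, $\lambda_i:=-d_i/c_i$; positivity forces $\lambda_i>0$. The $i=1$ relation shows $q_2$, hence $q_1$, continues to a real-analytic positive function on all of $\R$; comparing the two relations on $(0,E_1)$ and substituting then gives
\[
q_2(\sigma+\Delta)=\nu\,q_2(\sigma),\qquad q_1(\sigma+\Delta)=\nu^{-1}q_1(\sigma)\qquad (\sigma\in\R),
\]
with $\Delta:=E_2-E_1>0$ and $\nu:=\lambda_1/\lambda_2>0$ (in fact $\nu\in\Q$, the $\gamma$'s being integers). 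It remains to prove $\nu=1$ and $q_1,q_2$ constant — equivalently $V_1,V_2\in\mathcal{SP}$ — after which $\nu=1$ follows from \eqref{eq:ga4}. I would establish this rigidity via the analytic continuation of $q_j(\sigma)=\frac{1}{\sqrt{2}}\int_0^1\tilde{P}_j(\sigma s^2)(1-s^2)^{-1/2}\,ds$, where $\tilde{P}_j$ is the analytic function with $\tilde{P}_j(t)=W_j'(\sqrt{t})+W_j'(-\sqrt{t})$ (so $q_j$ is constant iff $\tilde{P}_j$ is): a non-constant $q_j$ has a complex singularity, and — using the ``a sufficiently high derivative has a singularity at the end of the domain'' mechanism of Section~\ref{sec:propertiesa} behind Proposition~\ref{prop:gamma} — one locates the whole singular set on a half-line issuing from the imaginary axis, whereas $q_j(\cdot+\Delta)=\nu^{\pm1}q_j(\cdot)$ makes that set invariant under the \emph{real} translation by $\Delta$; the contradiction forces $q_j$ constant. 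The main obstacle is exactly this last step: pinning down the singularities of the symmetrized period functions sharply enough to exclude a real multiplicative quasi-period. It is precisely this rigidity that produces the ``gap'' in the trichotomy of Theorem~\ref{thm:main2}, and it is where the theory of geometrically quasi-periodic analytic functions is genuinely needed; the reductions preceding it are comparatively routine.
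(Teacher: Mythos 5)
Your Step 1 and Step 2 are correct. The Puiseux argument at the two endpoints (splitting $\sqrt{2}\,a=A+\sqrt{\theta}B$, $\sqrt{2}\,\bar a=A-\sqrt{\theta}B$ and matching the half-integer parts) is a legitimate alternative to the paper's route, which instead extends $\gamma_a a+\gamma_{\bar a}\bar a$ analytically through $0$ and invokes Lemma~\ref{lem:evenU}; both reduce \eqref{eq:ga3} to \eqref{eq:ga4} and both need $B\not\equiv 0$ for non-even $V_1$, which you justify. Your treatment of (a) is essentially identical to the paper's: the evaluation at the fixed point $x^*=E/(1+\omega)$ of $x\mapsto E-\omega x$ gives $\lambda=1$ in one line (the "iteration'' is unnecessary). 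One caveat: your parenthetical claim that (a) already yields $q_2\equiv q_2(x^*)$, hence $V_1,V_2\in\mathcal{SP}$, fails when $\omega=1$, where $x\mapsto E-x$ is an involution and $q_2(x)=q_2(E-x)$ forces only a symmetry; this aside is not needed for (a), but it should not be asserted.

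The genuine gap is in Step 3, and you flag it yourself: the rigidity statement "a positive, analytic, geometrically quasi-periodic symmetrized period function is constant'' is precisely the content of the paper's Theorem~\ref{thm:const} (via Proposition~\ref{prop:mainqper}), which is the entire point of Section~\ref{sec:geomqper}; you neither invoke that theorem nor supply a proof. Moreover, the strategy you sketch cannot work as stated. Once $q_j$ is geometrically $2\pi$-quasi-periodic on $\R$, the map $\theta\mapsto e^{-\xi\theta}q_j(\theta)$ is analytic and $2\pi$-periodic, hence extends holomorphically to a strip $|\ip z|<\delta$ where it equals its Fourier series; so $q_j$ has no singularities near the real axis at all, and a singular set such as $\{i+n\Delta:n\in\Z\}$ is invariant under real translation by $\Delta$ without forcing constancy — the claimed contradiction does not materialize. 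The blow-up mechanism of Proposition~\ref{prop:gamma} is also inapplicable here: it concerns the truncated functions $a_\xi$ (integration up to a fixed $\xi$), whose one-sided derivatives blow up at the real endpoint $V(\xi)$ of their domain, and says nothing about the full period function $a+\bar a$, which is analytic on $[0,+\infty)$. The paper's actual argument is of a different nature: it inverts the Abel-type operator $A$ on each character $e^{(\xi+ik)\theta}$ via the explicit entire functions $\rho_{\xi,k}$, and uses the positivity of the preimage $W_j'+\bar W_j'$ together with the large-$x$ asymptotics of $\rho_{\pm\xi,k}$ (Lemmas~\ref{lem:estpos} and \ref{lem:estneg}) to exclude $\xi\neq 0$ and to force constancy when $\xi=0$. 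Without this (or an equivalent substitute), part (b) is not proved.
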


\begin{remark}
The proof of part ($a$) in Proposition~\ref{prop:twobad} uses some profound results on geometrically quasi-periodic maps formulated and proved in Section~\ref{sec:geomqper}. Due to their technical nature and lengthy proof, we decided to postpone them in a separate section.
\end{remark}

\begin{proof}[Proof of Proposition~\ref{prop:twobad}]
Suppose that $E\in\mathfrak{E}$.
Since $a$, $\bar{a}$, $b$, $\bar{b}$ are analytic on $(0,E)$, we have
\begin{equation}\label{eq:sumzero}
\gamma_a a(\theta)+\gamma_{\bar{a}}\bar{a}(\theta)+\gamma_b b(\theta)+\gamma_{\bar{b}}\bar{b}(\theta)= 0\text{ for all  }\theta\in(0,E).
\end{equation}
Notice that $\gamma_{a}+\gamma_{\bar{a}}\neq 0$ and $\gamma_{b}+\gamma_{\bar{b}}\neq 0$. Indeed, suppose that
$\gamma_{a}+\gamma_{\bar{a}}= 0$. As $\gamma_{a}\cdot\gamma_{\bar{a}}\geq 0$, it follows that $\gamma_{a}=\gamma_{\bar{a}}=0$.
By assumption, at least one of $\gamma_{b}$ and $\gamma_{\bar{b}}$ is non-zero. As $\gamma_{b}\cdot\gamma_{\bar{b}}\geq 0$ and the maps
$b$ and $\bar{b}$ take only positive values, $\gamma_b b(\theta)+\gamma_{\bar{b}}\bar{b}(\theta)\neq 0$ for all $\theta\in(0,E)$,
contrary to \eqref{eq:sumzero}.

Let us consider an analytic map $c:(0,+\infty)\to\R$ given by
\[c(\theta)=-\int_0^1\frac{\gamma_bW'_2(\sqrt{\theta}s)+\gamma_{\bar{b}}\bar{W}'_2(\sqrt{\theta}s)}{\sqrt{2}\sqrt{1-s^2}}ds\text{ for }\theta>0.\]
As $c(E-\theta)=-\gamma_b b(\theta)-\gamma_{\bar{b}}\bar{b}(\theta)$ for all  $\theta\in(0,E)$ (see \eqref{eq:formofa}), we have
\begin{equation*}\label{eq:aac}
\gamma_a a(\theta)+\gamma_{\bar{a}}\bar{a}(\theta)=c(E-\theta)\text{ for every }E\in\mathfrak{E}\text{  and for every }\theta\in(0,E).
\end{equation*}
It follows that $\gamma_a a+\gamma_{\bar{a}}\bar{a}$ and $c$ extend analytically to $\R$.
In view of Lemma~\ref{lem:evenU}, if $V_1$ is not even, then $\gamma_a=\gamma_{\bar{a}}$ and if  $V_2$ is not even, then  $\gamma_b=\gamma_{\bar{b}}$.
Therefore,
\begin{gather*}
\gamma_a a(\theta)+\gamma_{\bar{a}}\bar{a}(\theta)=\frac{\gamma_a+\gamma_{\bar{a}}}{2} (a(\theta)+\bar{a}(\theta)),\
\gamma_b b(\theta)+\gamma_{\bar{b}}\bar{b}(\theta)=\frac{\gamma_b+\gamma_{\bar{b}}}{2} (b(\theta)+\bar{b}(\theta))
\end{gather*}
regardless of whether the functions $V_1$ and $V_2$ are even or not.
In view of \eqref{eq:sumzero}, this gives \eqref{eq:ga4}. It follows that
\begin{equation}\label{eq:cform}
a(\theta)+\bar{a}(\theta)=-\frac{\gamma_b+\gamma_{\bar{b}}}{\gamma_a+\gamma_{\bar{a}}}\int_0^1\frac{W'_2(\sqrt{E-\theta}s)+\bar{W}'_2(\sqrt{E-\theta}s)}{\sqrt{2}\sqrt{1-s^2}}ds.
\end{equation}

\medskip

\noindent
\textbf{Part (a).}
Assume that $0<E_1<E_2$ are two elements of $\mathfrak E$. In view of  \eqref{eq:cform}, there are two positive numbers
$\gamma_1,\gamma_2$ such that
\begin{gather*}
a(\theta)+\bar{a}(\theta)=\gamma_1c(E_1-\theta)\text{ for }\theta\in(0,E_1),\
a(\theta)+\bar{a}(\theta)=\gamma_2c(E_2-\theta)\text{ for }\theta\in(0,E_2),
\end{gather*}
where
\[c(\theta)=\int_0^1\frac{W'_2(\sqrt{\theta}s)+\bar{W}'_2(\sqrt{\theta}s)}{\sqrt{2}\sqrt{1-s^2}}ds\text{ for }\theta>0.\]
Hence, for every $\theta\in(0,E_1)$, we have
\[ (a+\bar{a})(\theta+E_2-E_1)=\gamma_2c(E_2-(\theta+E_2-E_1))=\gamma_2c(E_1-\theta)=\frac{\gamma_2}{\gamma_1}( a+\bar{a})(\theta).\]
Hence, $a+\bar{a}$ is geometrically $E$-quasi-periodic for $E:=E_2-E_1>0$, see Definition~\ref{def:qp}. In view of Theorem~\ref{thm:const}, we get
 $V_1,V_2\in\mathcal{SP}$.

\medskip

\noindent
\textbf{Part (b).}
Suppose that $V_1(x)=\omega V_2(\tau x)$ for some $\omega>0$ and $\tau\neq 0$. Then $(V_1)_*(x)=\sgn(\tau)\sqrt{\omega} (V_2)_*(\tau x)$.
Hence
\begin{equation}\label{eq:W_1'}
W_2(x)=\tau W_1(\sgn(\tau)\sqrt{\omega}x)\quad\text{and}\quad W'_2(x)=|\tau| \sqrt{\omega}W'_1(\sgn(\tau)\sqrt{\omega}x).
\end{equation}
In view of \eqref{eq:cform} and \eqref{eq:formofa}, for every $\theta\in(0,E)$,
\begin{align*}
-\frac{\gamma_{a}+\gamma_{\bar{a}}}{\gamma_{b}+\gamma_{\bar{b}}}(a(\theta)+\bar{a}(\theta))&
=\int_0^1\frac{W'_2(\sqrt{E-\theta}s)+{W}'_2(-\sqrt{E-\theta}s)}{\sqrt{2}\sqrt{1-s^2}}ds\\
&=|\tau| \sqrt{\omega}\int_0^1\frac{W'_1(\sqrt{\omega(E-\theta)}s)+{W}'_1(-\sqrt{\omega(E-\theta)}s)}{\sqrt{2}\sqrt{1-s^2}}ds\\
&=|\tau| \sqrt{\omega}\big(a(\omega(E-\theta))+\bar{a}(\omega(E-\theta))\big).
\end{align*}
Setting $\theta:=\tfrac{\omega}{1+\omega}E\in(0,E)$, we have $\omega(E-\theta)=\theta$. As $a$, $\bar{a}$ are positive, by \eqref{eq:W_1'},
it follows that
\[\frac{\gamma_{a}+\gamma_{\bar{a}}}{\gamma_{b}+\gamma_{\bar{b}}}=
-|\tau|\sqrt{\omega}=-\frac{W'_2(0)}{W'_1(0)}=-\sqrt{\frac{V''_1(0)}{V''_2(0)}}.\]

\noindent
{\textbf{Part (c).} If additionally  $\omega\neq 1$, then $(a+\bar{a})(\theta)=(a+\bar{a})(\omega(E-\theta))$, so
$a+\bar{a}$ is invariant under the action of the map $\theta\mapsto \omega(E-\theta)$ for which $\tfrac{\omega}{1+\omega}E$ is forward (if $\omega<1$) or backward (if $\omega>1$) attracting fixed point. It follows that $a+\bar{a}$ is constant. Therefore $V_1\in\mathcal{SP}$ and automatically $V_2\in\mathcal{SP}$.}
%
\end{proof}


\begin{remark}\label{rmk:SP}
Assume that $V_1,V_2\in \mathcal{SP}$. In view of \eqref{eq:SP}, for every $E>0$ and for all $\theta\in (0,E)$, we have
\[a(\theta)+\bar{a}(\theta)=\int_0^1\frac{W'_1(\sqrt{\theta}s)+\bar{W}'_1(\sqrt{\theta}s)}{\sqrt{2}\sqrt{1-s^2}}ds=W'_1(0)\frac{\pi}{\sqrt{2}}
=\frac{\pi}{\sqrt{V''_1(0)}}\]
and
\[b_E(\theta)+\bar{b}_E(\theta)=\int_0^1\frac{W'_2(\sqrt{E-\theta}s)+\bar{W}'_2(\sqrt{E-\theta}s)}{\sqrt{2}\sqrt{1-s^2}}ds=
W'_2(0)\frac{\pi}{\sqrt{2}}=\frac{\pi}{\sqrt{V''_2(0)}}.\]
Therefore,
\begin{equation}\label{eq:spab}
a(\theta)+\bar{a}(\theta)=\sqrt{\frac{V''_1(0)}{V''_2(0)}}(b_E(\theta)+\bar{b}_E(\theta))\text{ for all $E>0$ and $\theta\in(0,E)$}.
\end{equation}
\end{remark}

\section{General criterion for the absence of resonance}\label{sec:genequ}
In this section, we present and prove a simple criterion (Theorem~\ref{thm:main-min}) to show the absence of resonance for billiard flows on rectilinear polygons in directions $\pm\pi/4,\pm 3\pi/4$.
A rectilinear polygon $P\in\mathcal{RP}$ is \emph{resonant} if the billiard flow (in directions $\pm\pi/4,\pm 3\pi/4$) has an orbit joining two corners (possibly the same).
Returning to the Hamiltonian flow $(\varphi^P_t)_{t\in\R}$ describing the behavior of a particle
in the polygon $P\in\mathcal{RP}$ and its billiard representation, described in Section~\ref{sec:osctobil}, we can formulate the following principle:
\begin{align}
\begin{aligned}
\text{A pair $(E,\theta)$  is resonant for the flow $(\varphi^P_t)_{t\in\R}$}\\
\text{if and only if the polygon $P_{E,\theta}$ is resonant. }
\end{aligned}
\end{align}
This principle, together with Theorem~\ref{thm:main-min}, will allow us in Section~\ref{sec:appl} to prove the non-resonance of specific energy levels.

\medskip

Recall that for any $P\in\mathcal{RP}$, the finite sets $X^+_P$, $X^-_P$ collect the parameters of the vertical sides of $P$  and $Y^+_P$, $Y^-_P$ collect the parameters of the horizontal sides.
Let $X_P:=X^+_P \cup X^-_P$ and $Y_P:=Y^+_P \cup Y^-_P$. Moreover, $x^+_P$, $x^-_P$, $y^+_P$, $y^-_P$ are parameters of the extreme sides of $P$. Assume that $x^+_P\notin X^-_P$, $x^-_P\notin X^+_P$, $y^+_P\notin Y^-_P$, $y^-_P\notin Y^+_P$.

\begin{theorem}\label{thm:main-min}
Suppose that
for any choice of integers $n_{{x}}$, ${x}\in {X}_{{P}}\setminus\{{0}\}$ and $m_{{y}}$, ${y}\in{Y}_{{P}}\setminus\{{0}\}$
such that not all of them are zero and $n_{{x}_P^+}\cdot n_{{x}_P^-}\geq 0$, $m_{{y}_P^+}\cdot m_{{y}_P^-}\geq 0$, we have
\begin{equation}\label{cond:i}
\sum_{{x}\in{X}_{{P}}\setminus\{0\}}n_{{x}}{x}-\sum_{{y}\in{Y}_{{P}}\setminus\{{0}\}}m_{{y}}{y}\neq 0.
\end{equation}
Then the polygon $P$ is not resonant.
\end{theorem}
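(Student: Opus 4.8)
The plan is to argue by contraposition: assume $P$ is resonant, so the billiard flow in some direction $\pm\pi/4,\pm3\pi/4$ has an orbit joining two corners, and produce a non-trivial integer relation of the form \eqref{cond:i} violating the hypothesis. First I would unfold the polygon $P$ along the four reflection directions to pass to the translation surface picture (as announced in the introduction to this section); under unfolding, a corner-to-corner billiard trajectory in direction $\pi/4$ becomes a saddle connection of the translation flow whose holonomy vector has equal horizontal and vertical components (up to sign dictated by the direction). The key point is that the horizontal displacement of such a saddle connection is an integer combination of the horizontal ``widths'' of the polygon, i.e.\ of the numbers $x - x'$ for $x,x'\in X_P$, and likewise the vertical displacement is an integer combination of the $y - y'$ for $y,y'\in Y_P$; equating the two (that is what the diagonal direction forces) yields a relation $\sum_x n_x x - \sum_y m_y y = 0$.

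The main work, and the step I expect to be the main obstacle, is to control the \emph{signs} of the coefficients attached to the extreme sides, so as to land exactly in the constrained class of relations covered by the hypothesis ($n_{x_P^+}\cdot n_{x_P^-}\geq 0$ and $m_{y_P^+}\cdot m_{y_P^-}\geq 0$). The natural way to see this is geometric: the trajectory lives inside $P$, so it can never cross a vertical line strictly to the right of the rightmost side $x = x_P^+$ nor strictly to the left of $x = -x_P^-$; thus in the telescoping sum recording how the unfolded trajectory's horizontal coordinate changes each time it bounces off a vertical wall, the extreme walls $x_P^+$ and $-x_P^-$ are hit ``from the same side'' in a way that makes their signed contributions compatible — both non-negative or arrangeable so. I would make this precise by writing the horizontal coordinate of the unfolded orbit as a piecewise-linear function of the flow time, with slope $+1$; each reflection at a vertical side of parameter $\varepsilon x$ (with $\varepsilon=\pm1$ depending on which half-plane the side lives in) contributes $\pm 2\varepsilon x$ to the running total, and the total displacement over the whole saddle connection is the sum. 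Tracking which reflections are off extreme sides and noting that the orbit stays in $[-x_P^-, x_P^+]\times[-y_P^-,y_P^+]$ forces the required sign condition. The separation hypotheses $x_P^+\notin X_P^-$ etc.\ guarantee the extreme sides are genuinely ``on one side'' and the bookkeeping is unambiguous.

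Once the relation and its sign constraints are established, I would check that the relation is non-trivial: the saddle connection has positive length, so at least one of the horizontal or vertical displacements is non-zero, hence not all $n_x$ and $m_y$ vanish. (If the trajectory is ``straight'' in one coordinate — degenerate — one should rule this out using that the direction is genuinely diagonal, $\pm\pi/4$, so both coordinates move.) This contradicts the assumed hypothesis of the theorem, so $P$ cannot be resonant. I would also remark that the reduction really only needs the elementary unfolding for these four specific directions, so the full translation-surface machinery of \cite{Fr} is not required here — only the observation that reflections preserve the two coordinate lattices generated by $X_P$ and $Y_P$ respectively, and that the $\pm\pi/4$ slope ties the two coordinates together.
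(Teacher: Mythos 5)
Your proposal is correct and follows essentially the same route as the paper: contraposition, unfolding to the four-copy translation surface, writing the holonomy of the resulting $\pi/4$-saddle connection as an integer combination of the side parameters, equating real and imaginary parts, and forcing the non-negativity of the extreme-side coefficients from the fact that the extreme sides bound the polygon (the paper's equations \eqref{eq:ext}--\eqref{eq:ext4}). The only difference is presentational — you do the holonomy bookkeeping directly as a telescoping sum, whereas the paper quotes the displacement formula \eqref{eq:saddle} from \cite{Fr} — so no further comment is needed.
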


{The above result can be interpreted as a minimality criterion for the flow when the parameters of the polygon are rationally independent, or as an analogue of the classical Keane \cite{Kea} result on minimality. However, our condition is slightly weaker than full rational independence, since it allows for rational relations among the extremal parameters of the polygon, which is necessary when some potentials are even.}

The proof of this result uses the same arguments as in  Theorem~4.2 in \cite{Fr}. Nevertheless, we include the proof for completeness.

\subsection{Short introduction to translation surfaces}
Since the proof of Theorem~\ref{thm:main-min} uses  translation surface tools, we give a short introduction to this subject in this section. For further background material, we refer the reader to \cite{ViB}, \cite{Yo} and \cite{ZoFlat}.

\medskip

A \emph{translation surface} $(M,\omega)$ is a compact  orientable topological surface $M$, together with a finite set
of points $\Sigma$ (called \emph{singular} points) and an atlas of charts $\omega=\{\zeta_\alpha:U_\alpha\to \C:\alpha\in\mathcal{A}\}$ on $M\setminus \Sigma$ such that every transition map
$\zeta_\beta\circ\zeta^{-1}_\alpha:\zeta_\alpha(U_\alpha\cap U_\beta)\to \zeta_\beta(U_\alpha\cap U_\beta)$ is a translation, i.e., for every connected component $C$ of $U_\alpha\cap U_\beta$, there exists
$v_{\alpha,\beta}^C\in \C$ such that  $\zeta_\beta\circ\zeta^{-1}_\alpha(z)=z+v^C_{\alpha,\beta}$ for $z\in \zeta_\alpha^{-1}(C)$. All points in $M\setminus \Sigma$ are called \emph{regular}. For any point $x\in M$, the translation structure $\omega$ allows us to define the total angle around $x$.
If $x$ is regular, then the total angle is $2\pi$. If $\sigma$ is singular, then the total angle is $2\pi(k_\sigma+1)$, where $k_\sigma\in\N$ is the multiplicity of $\sigma$. Then $\sum_{\sigma\in\Sigma}k_\sigma=2g-2$, where $g$ is the genus of the surface $M$.
Singular points with zero multiplicity are sometimes called fake singularities.
These points are sometimes treated instead as regular points.

\medskip

For any direction $\vartheta\in\R/2\pi\Z$, let $X_\vartheta$ be a tangent vector field on $M\setminus{\Sigma}$ which is the pullback of the unit constant vector field $e^{i\vartheta}$ on $\C$ through the charts of the atlas.
Since the derivative of any transition map is the identity, the vector field  $X_\vartheta$ is well defined on $M\setminus{\Sigma}$.
Denote by $(\psi^\vartheta_t)_{t\in\R}$ the corresponding local flow, called the translation flow on $(M,\omega)$ in direction $\vartheta$.

%

A \emph{saddle connection} in direction $\vartheta$ is an orbit segment of $(\psi^\vartheta_t)_{t\in\R}$ that joins
 singularities from ${\Sigma}$ (possibly the same one) and has no interior singularities.

\begin{remark}\label{rem:yoc}
By Corollary~5.4 in \cite{Yo}, if $(M,\omega)$ has no saddle connection in direction $\vartheta$, then the flow $(\psi^\vartheta_t)_{t\in\R}$ is \emph{minimal} on every connected component of $M$, i.e., every orbit (all of them are semi-infinite or double-infinite) is dense in the connected component of $M$ containing the orbit. Moreover, if $(\psi^\vartheta_t)_{t\in\R}$ is not minimal and has a (regular) periodic orbit, then it is surrounded by a maximal open cylinder in $(M,\omega)$ consisting of periodic orbits that are homotopic to the original one.
The boundary of the cylinder consists of a chain of saddle connections, see \cite[\S 5.2]{Yo}.
\end{remark}

\subsection{Partitions of translation surfaces into polygons}\label{sec:parttranssurf}
In this section, we recall some basic concepts introduced in \cite{Fr}.

\begin{definition}\label{def:part}
Let $(M,\omega)$ be a compact translation surface. A finite partition $\mathcal{P}=\{P_\alpha:\alpha\in\mathcal A\}$ of $M$ is called a \emph{partition into polygons} if
\begin{itemize}
\item[$(i)$] each $P_\alpha$, $\alpha\in\mathcal A$ is a closed connected  subset of $M$ and  $\bigcup_{\alpha\in \mathcal A}P_\alpha=M$;
\item[$(ii)$]
for every $\alpha\in \mathcal A$, there exists a chart $\zeta_\alpha:U_\alpha\to \C$ in $\omega$ such that:
\begin{itemize}
\item $Int (P_\alpha)\subset P_\alpha\setminus\Sigma \subset U_\alpha$,
\item the restriction of $\zeta_\alpha$ to $P_\alpha\setminus\Sigma$ is a homeomorphism onto its image,
\item $\zeta_\alpha(Int (P_\alpha))\subset \C$ is the interior of a compact polygon $\widetilde{P}_{\alpha}\subset \C$ and
\item $\zeta_\alpha^{-1}:Int(\widetilde{P}_{\alpha})\to Int(P_\alpha)$ has a continuous  extension
$\bar{\zeta}^{-1}_\alpha:\widetilde{P}_{\alpha}\to P_\alpha$.
\end{itemize}
Then the $\bar{\zeta}^{-1}_\alpha$-image of any side in $\widetilde{P}_{\alpha}$ is called a side of $P_\alpha$ and the $\bar{\zeta}^{-1}_\alpha$-image of any vertex in $\widetilde{P}_{\alpha}$ is called a vertex of $P_\alpha$.
\item[$(iii)$] if $P_\alpha\cap P_\beta\neq \emptyset$, then it is the union of common sides and corners of the polygons $P_\alpha$, $P_\beta$;
\item[$(iv)$]  if $\sigma\in P_\alpha\cap \Sigma$, then $\sigma$ is a vertex of $P_\alpha$.
\end{itemize}
Let $\vartheta\in \R/2\pi\Z$. The partition $\mathcal{P}$ is \emph{$\vartheta$-admissible} if the polygons $\widetilde{P}_\alpha$, $\alpha\in\mathcal{A}$  have no sides parallel to $\vartheta$.
\end{definition}
Notice that the definition does not require $P_\alpha$ to be a topological polygon; this means $\bar{\zeta}^{-1}_\alpha:\widetilde{P}_{\alpha}\to P_\alpha$ does not have to be a homeomorphism.
Distinct vertices of $\widetilde{P}_{\alpha}$ can be mapped to the same singularities in $P_\alpha\cap\Sigma$.

\begin{definition}
For any partition into polygons $\mathcal{P}=\{P_\alpha:\alpha\in\mathcal A\}$ of the translation surface $(M,\omega)$, let:
\begin{itemize}
\item ${D}={D}(\omega,\mathcal P)$ be the set of all sides in $\mathcal P$;
\item $V=V(\omega,\mathcal P)$ be the set of triples (vertices) $(\sigma,\widetilde{\sigma},\alpha)\in{\Sigma}\times\C\times \mathcal A$ for which $\sigma\in P_\alpha\cap {\Sigma}$ and $\widetilde{\sigma}$ is a vertex
of $\widetilde{P}_\alpha$ such that $\bar{\zeta}^{-1}_\alpha(\widetilde{\sigma})=\sigma$.
\end{itemize}
We will call the pair $(D,V)$  \emph{the combinatorial data} of the partition $\mathcal{P}$.
\end{definition}

\begin{definition}
Assume that the partition $\mathcal{P}$ is $\vartheta$-admissible.
Suppose that $e\in D$ is a common side of $P_\alpha$ and $P_\beta$ and suppose that every  orbit in direction $\vartheta$ through the side $e$ passes from $P_\alpha$ to $P_\beta$.
Then, the displacement $\mathfrak{D}^{\vartheta}_{\omega}(e):=\zeta_\alpha(x)-\zeta_\beta(x)$ does not depend on the choice of $x\in e$.
For any $v=(\sigma,\widetilde{\sigma},\alpha)\in V$, let $\mathfrak{B}^{\vartheta}_{\omega}(v)=-\widetilde{\sigma}$
and
$\mathfrak{E}^{\vartheta}_{\omega}(v)=\widetilde{\sigma}$.
\end{definition}

Suppose that $\mathcal{P}$ is a $\vartheta$-admissible partition of $(M,\omega)$ and $\gamma$ is its saddle connection in direction $\vartheta$ for which $\tau=|\gamma|>0$ is its length.
By Theorem~2.12 in \cite{Fr}, we have
\begin{equation}\label{eq:saddle}
\tau e^{i\vartheta}=\mathfrak{B}^{\vartheta}_{\omega}(v_+)+\mathfrak{E}^{\vartheta}_{\omega}(v_-)+\sum_{e\in D}n_e\mathfrak{D}^{\vartheta}_{\omega}(e),
\end{equation}
where
\begin{itemize}
\item $v_+=(\sigma_+,\widetilde{\sigma}_+,\alpha)$ is such that $\sigma_+\in{\Sigma}\cap P_\alpha$ is the beginning of $\gamma$, an initial segment of $\gamma$ runs in $P_\alpha$,
and $\widetilde{\sigma}_+$ is a vertex of $\widetilde{P}_\alpha$, which is the beginning of the $\zeta_\alpha$-image of the initial segment;
\item $v_-=(\sigma_-,\widetilde{\sigma}_-,\beta)$ is such that $\sigma_-\in{\Sigma}\cap P_\beta$ is the end of $\gamma$, a final segment of $\gamma$ runs in $P_\beta$,
and $\widetilde{\sigma}_-$ is a vertex of $\widetilde{P}_\beta$, which is the end of the $\zeta_\beta$-image of the final segment;
\item for any $e\in D$, $n_e$ the number of times $\gamma$ crosses $e$.
\end{itemize}
This observation has two consequences: it excludes saddle connections in Theorem~\ref{thm:main-min} and yields the following resonance criterion.

\begin{lemma}\label{thm:sc}
Assume that  $\mathcal P$ is a $\vartheta$-admissible partition of $(M,\omega)$.
Suppose that $\gamma$ is a saddle connection on $(M,\omega)$ in direction $\vartheta$ such that
\[\ip  e^{-i\vartheta_0}( \mathfrak{B}^{\vartheta}_{\omega}(v_+)+ \mathfrak{E}^{\vartheta}_{\omega}(v_-)+\sum_{e\in D}n_e \mathfrak{D}^{\vartheta}_{\omega}(e))= 0.\]
Then $\vartheta=\vartheta_0\operatorname{mod}\pi$.
\end{lemma}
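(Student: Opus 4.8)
\textbf{Proof plan for Lemma~\ref{thm:sc}.}

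The key structural fact is equation~\eqref{eq:saddle}: for a saddle connection $\gamma$ of length $\tau>0$ in direction $\vartheta$ one has
\[
\tau e^{i\vartheta}=\mathfrak{B}^{\vartheta}_{\omega}(v_+)+\mathfrak{E}^{\vartheta}_{\omega}(v_-)+\sum_{e\in D}n_e\mathfrak{D}^{\vartheta}_{\omega}(e).
\]
So the quantity appearing in the hypothesis of the lemma is exactly $\tau e^{i\vartheta}$. The plan is therefore extremely short: substitute \eqref{eq:saddle} into the stated assumption, so that the hypothesis becomes
\[
\ip\!\big(e^{-i\vartheta_0}\,\tau e^{i\vartheta}\big)=0,\qquad\text{i.e.}\qquad \tau\,\sin(\vartheta-\vartheta_0)=0.
\]
Since $\tau=|\gamma|>0$, this forces $\sin(\vartheta-\vartheta_0)=0$, which is precisely $\vartheta=\vartheta_0\bmod\pi$.

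The only point that requires a word of care is matching the hypotheses of \eqref{eq:saddle} with the data in the statement of the lemma: one must check that $v_+$ and $v_-$ are genuinely the vertices attached to the initial and final segments of $\gamma$ in the sense of Theorem~2.12 of \cite{Fr}, and that the integers $n_e$ are the hit-counts of $\gamma$ along the sides $e\in D$. These are exactly the quantities named in the displayed formula of the lemma, so the identification is immediate; there is essentially no obstacle here, the lemma being a direct algebraic consequence of \eqref{eq:saddle} together with positivity of the length. I would present it in one or two lines.

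\begin{proof}
Let $\tau=|\gamma|>0$ be the length of the saddle connection $\gamma$. By \eqref{eq:saddle},
\[
\mathfrak{B}^{\vartheta}_{\omega}(v_+)+\mathfrak{E}^{\vartheta}_{\omega}(v_-)+\sum_{e\in D}n_e\mathfrak{D}^{\vartheta}_{\omega}(e)=\tau e^{i\vartheta},
\]
with $v_+$, $v_-$ and the integers $n_e$ as in the conclusion of Theorem~2.12 in \cite{Fr}. Hence the assumption of the lemma reads
\[
0=\ip\Big(e^{-i\vartheta_0}\big(\mathfrak{B}^{\vartheta}_{\omega}(v_+)+\mathfrak{E}^{\vartheta}_{\omega}(v_-)+\sum_{e\in D}n_e\mathfrak{D}^{\vartheta}_{\omega}(e)\big)\Big)=\ip\big(\tau e^{i(\vartheta-\vartheta_0)}\big)=\tau\sin(\vartheta-\vartheta_0).
\]
Since $\tau>0$, we get $\sin(\vartheta-\vartheta_0)=0$, that is, $\vartheta=\vartheta_0\operatorname{mod}\pi$.
\end{proof}
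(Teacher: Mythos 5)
Your proof is correct and is exactly the intended argument: the paper states this lemma without proof, treating it as an immediate consequence of \eqref{eq:saddle}, and your one-line computation $\ip\big(\tau e^{i(\vartheta-\vartheta_0)}\big)=\tau\sin(\vartheta-\vartheta_0)=0$ together with $\tau>0$ is precisely that consequence.
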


%

\subsection{From billiards to translation surfaces}
For any polygon ${P}\in \mathcal{RP}$, the directional billiard flow on ${P}$ in directions $\pm\pi/4, \pm 3\pi/4$ acts on the union of four copies of ${P}$, denoted by  $P_{\pi/4}$,
$P_{-\pi/4}$, $P_{3\pi/4}$, $P_{-3\pi/4}$. Each copy $P_{\vartheta}$ for $\vartheta\in \{\pm\pi/4$, $\pm 3\pi/4\}$ represents
all unit vectors flowing in the same direction $\vartheta$. After applying the horizontal (denoted by $\gamma_h$) or vertical (denoted by $\gamma_v$) reflection (or both) to each copy separately, we can arrange all unit vectors to flow in the same direction $\pi/4$.
More precisely, after such transformations, all unit vectors in
\begin{gather*}
{P}_{++}:={P}_{\pi/4},\ P_{+-}:=\gamma_hP_{-\pi/4},\
{P}_{-+}:=\gamma_vP_{3\pi/4},\ P_{--}:=\gamma_h\circ\gamma_v P_{-3\pi/4}
\end{gather*}
follow the same direction $\pi/4$. By gluing the corresponding sides of these four polygons, we get a compact orientable surface $M$ with a translation structure $\omega$ inherited from the Euclidean plane, see Figure~\ref{fig:surface}.
Moreover, the directional billiard flow on $P$ in directions $\pm\pi/4$, $\pm 3\pi/4$ is conjugated to the translation flow $(\psi^{\pi/4}_t)_{t\in\R}$ on the translation surface $(M,\omega)$. This is an example of using the so-called unfolding procedure coming from \cite{Fox-Ker} and \cite{Ka-Ze}. Let us mention that the number of connected components of the polygon $P$ and the surface $M$ is the same.

Notice that for every convex vertex of the polygon $P$, the total angle of the corresponding point on $(M,\omega)$ is $2\pi$, so it is a fake singularity.
On the other hand, for every concave vertex of $P$, the total angle of the corresponding point is $6\pi$, so it is a singular point (from ${\Sigma}$)
with multiplicity $2$.

\begin{remark}\label{rem:cyl}
Note that the existence of an orbit segment joining corners for the billiard flow ($\pm\pi/4,\pm3\pi/4$) on $P$ (this is resonance) is equivalent to the existence of a saddle connection in direction $\pi/4$ on $(M,\omega)$.
Suppose that  resonance is revealed on $P$ by connecting two convex vertices. Then, the resonant orbit on $P$ corresponds on $(M,\omega)$ to the union of two saddle connections connecting two fake singularities. If we treat all the fake singularities on $(M,\omega)$ as regular points, then these two orbit segments together form a regular periodic orbit. By Remark~\ref{rem:yoc}, this orbit is surrounded by a cylinder of regular periodic orbits whose boundary consists of saddle connections connecting true singularities. It follows that $P$ has plenty of regular periodic orbits and some resonant orbits joining concave corners.
\end{remark}

\begin{figure}[h]
\includegraphics[width=0.8 \textwidth]{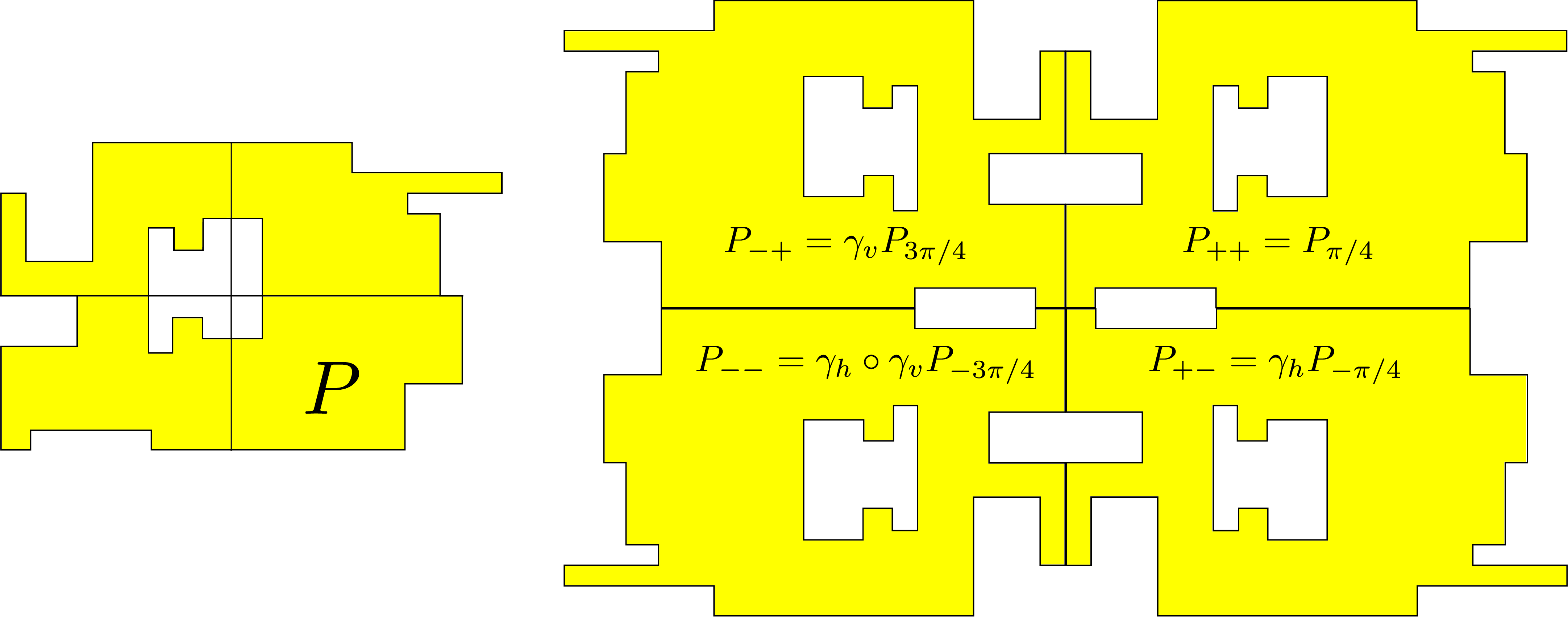}
\caption{The billiard table $P$ and  the translation surface $(M,\omega)$ - connected case. }
\label{fig:surface}
\end{figure}

The translation surface $(M,\omega)$ has a natural partition into four rectilinear polygons
\[\mathcal{P}=\{P_{++}, P_{+-},P_{-+},P_{--}\},
\]
so that $\mathcal{A}=\{++,+-,-+,--\}$.
Moreover,
\begin{gather*}
{X}_{{P}_{++}}={X}_{{P}_{+-}}={X}_{
{P}_{-+}}={X}_{{P}_{--}}={X}_{{P}},\
{Y}_{{P}_{++}}={Y}_{{P}_{+-}}={Y}_{
{P}_{-+}}={Y}_{{P}_{--}}={Y}_{{P}}.
\end{gather*}
We split the set of all sides $D$ of the partition $\mathcal{P}$ into the subsets of the vertical sides $D_v$ and the horizontal sides $D_h$.
We also distinguish the set of sides derived from the extreme sides of polygons $P_{++}, P_{+-},P_{-+},P_{--}$, which we denote by $D_{ext}$.

\medskip

Let $\vartheta$ be any direction in $(0,\pi/2)$. Then the partition $\mathcal{P}$ is $\vartheta$-admissible.
Suppose that $e\in D_v$ is a vertical side of $\mathcal{P}$, a common side of ${P}_{\alpha}$ and ${P}_{\beta}$ for some $\alpha,\beta\in\mathcal{A}$ such that ${P}_{\alpha}$ is on the left side of $e$.
For every $s\in e$, we have $\zeta_\beta(s)=-\overline{\zeta_\alpha(s)}$. Hence, the corresponding displacement
\begin{equation*}
\mathfrak{D}^{\vartheta}_{\omega}(e)=2\rp \zeta_\alpha(s)=\pm 2x(e)\text{ for some }{x(e)}\in {X}_{{P}}.
\end{equation*}
If $\rp \zeta_\alpha(s)\geq 0$, then the side $e\in D_v$ is \emph{positively oriented}; otherwise, it is \emph{negatively oriented}. We denote the set of positively (negatively) oriented vertical sides by $D_v^+$ ($D_v^-$ resp.).
Suppose that  $e\in D_v\cap D_{ext}$. Then, $x(e)=x^\pm_P$ and $e$ is located on the right side of the polygon  $P_\alpha$, so it has a non-negative first coordinate. It follows that every such (extreme) side is positively oriented.

Suppose that $e\in D_h$ is a horizontal side of $\mathcal{P}$, a common side of ${P}_{\alpha}$ and ${P}_{\beta}$ for some $\alpha,\beta\in\mathcal{A}$ such that ${P}_{\alpha}$ is below $e$.
For every $s\in e$, we have  $\zeta_\beta(s)=\overline{\zeta_\alpha(s)}$ and
\begin{equation*}
\mathfrak{D}^{\vartheta}_{\omega}(e)=2i\ip  \zeta_\alpha(s)=\pm 2iy(e)\text{ for some }y(e)\in {Y}_{{P}}.
\end{equation*}
If $\ip \zeta_\alpha(s)\geq 0$, then the side $e\in D_h$ is \emph{positively oriented}; otherwise, it is \emph{negatively oriented}. We denote the set of positively (negatively) oriented horizontal sides by $D_h^+$ ($D_h^-$ resp.). As for the vertical sides, one can easily deduce that all $e\in D_h\cap D_{ext}$ are also positively oriented with $y(e)=y^\pm_P$. In summary, we have
\begin{gather}
\label{eq:D}
\mathfrak{D}^{\vartheta}_{\omega}(e)=\pm 2x(e)\text{ if }e\in D_v^\pm;\quad
\mathfrak{D}^{\vartheta}_{\omega}(e)=\pm 2iy(e)\text{ if }e\in D_h^\pm;\\
\label{eq:ext}
D_{ext}\subset D_v^+\cup D_h^+.
\end{gather}

Note that the local coordinates of any vertex in $\mathcal{P}$ are of the form
\[x+iy\text{ or }-x+iy\text{ or }
x-iy\text{ or }-x-iy\]
for some $x\in {X}_{{P}}$ and ${y}\in {Y}_{{P}}$.
Hence, for every $v=(\sigma,\tilde{\sigma},\alpha)\in V$, we have
\[-\mathfrak{B}^{\vartheta}_{\omega}(v)=\mathfrak{E}^{\vartheta}_{\omega}(v)= \zeta_\alpha(\tilde \sigma)=\pm x\pm iy\text{ for some }x\in {X}_{{P}}, \ {y}\in {Y}_{{P}}.\]
Denote by $e_v(v)\in D_v$ the vertical side and by $e_h(v)\in D_h$  the horizontal side in $P_\alpha$ emanating from the vertex $v$. Then
\begin{align}\label{eq:BE}
\begin{aligned}
\mathfrak{B}^{\vartheta}_{\omega}(v)&=-\zeta_\alpha(\tilde \sigma)= \vep_1^b(v) x(e_v(v))+ \vep_2^b(v)iy(e_h(v)),\\
\mathfrak{E}^{\vartheta}_{\omega}(v)&=\zeta_\alpha(\tilde \sigma)=\vep_1^e(v) x(e_v(v))+ \vep_2^e(v)iy(e_h(v))
\end{aligned}
\end{align}
where the signs $\vep_1^b(v),\vep_2^b(v),\vep_1^e(v),\vep_2^e(v)\in\{\pm\}$ depend on the location of the vertex $v$ in the polygon $P_\alpha$.

Suppose that $\sigma$ is the end point of an orbit segment in the direction $\vartheta$ that runs in $P_\alpha$. If $e_v(v)$ is an extreme vertical side, then the vertex $\tilde{\sigma}$ is convex, so $\sigma$ is a fake singularity.
Moreover, $e_v(v)$ is located on the right side of the polygon $P_\alpha$, so it has a non-negative first coordinate. It follows that
\begin{equation}\label{eq:ext1}
\vep_1^e(v)=1\text{ if }e_v(v)\in D_{ext}.
\end{equation}
Similarly, if $e_h(v)$ is an extreme horizontal side, then $e_h(v)$ is located above the polygon $P_\alpha$ and
\begin{equation}\label{eq:ext2}
\vep_2^e(v)=1\text{ if }e_h(v)\in D_{ext}.
\end{equation}

Suppose that $\sigma$ is the initial point of an orbit segment in the direction $\vartheta$ that runs in $P_\alpha$. If $e_v(v)$ is an extreme vertical side, then $\sigma$ is again a fake singularity, and
 $e_v(v)$ is located on the left side of the polygon $P_\alpha$, so it has a negative first coordinate. It follows that
\begin{equation}\label{eq:ext3}
\vep_1^b(v)=1\text{ if }e_v(v)\in D_{ext}.
\end{equation}
Similarly, if $e_h(v)$ is an extreme horizontal side, then $e_h(v)$ is located below the polygon $P_\alpha$ and
\begin{equation}\label{eq:ext4}
\vep_2^b(v)=1\text{ if }e_h(v)\in D_{ext}.
\end{equation}

\medskip

Suppose that $\gamma$ is a saddle connection on $(M,\omega)$ in direction $\vartheta\in(0,\pi/2)$ such that $\tau>0$ is its length, $v_+\in V$ represents its beginning, and $v_-\in V$ represents its end. In view of \eqref{eq:saddle},
\[\tau e^{i\vartheta}=\mathfrak{B}^{\vartheta}_{\omega}(v_+)+\mathfrak{E}^{\vartheta}_{\omega}(v_-)+\sum_{e\in D}n_e\mathfrak{D}^{\vartheta}_{\omega}(e),\]
where $n_e$ is the intersection number of $e$ and $\gamma$. In view of \eqref{eq:D} and \eqref{eq:BE},
\begin{align}\label{eq:sumsum}
\begin{aligned}
\tau &e^{i\vartheta}=\sum_{e\in D_v^+}2n_ex(e)-\sum_{e\in D_v^-}2n_ex(e)+\vep_1^b(v_+)x(e_v(v_+))+\vep_1^e(v_-)x(e_v(v_-))\\
&+i\Big(\sum_{e\in D_h^+}2n_ey(e)-\sum_{e\in D_h^-}2n_ey(e)+\vep_2^b(v_+)y(e_h(v_+))+\vep_2^e(v_-)y(e_h(v_-))\Big).
\end{aligned}
\end{align}

\begin{proof}[Proof of Theorem~\ref{thm:main-min}]
Suppose, contrary to our claim, that $P$ has a resonant orbit. By Remark~\ref{rem:cyl}, the corresponding translation surface $(M,\omega)$ has a saddle connection $\gamma$ in direction $\vartheta=\pi/4$.
In view of \eqref{eq:sumsum}, we have
\begin{equation}\label{eq:tautau}
\tau e^{i\pi/4}=\sum_{x\in X_P}m^1_x x+i\sum_{y\in Y_P}m^2_y y=\sum_{x\in X_P\setminus\{0\}}m^1_x x+i\sum_{y\in Y_P\setminus\{0\}}m^2_y y,
\end{equation}
where
\begin{align*}
m^1_x&=\sum_{e\in D_v^+, x(e)=x}2n_e-\sum_{e\in D_v^-, x(e)=x}2n_e+\vep_1^b(v_+)\delta_{x(e_v(v_+)),x}+\vep_1^e(v_-)\delta_{x(e_v(v_-)),x},\\
m^2_y&=\sum_{e\in D_h^+, y(e)=y}2n_e-\sum_{e\in D_h^-, y(e)=y}2n_e+\vep_2^b(v_+)\delta_{y(e_h(v_+)),y}+\vep_2^e(v_-)\delta_{y(e_h(v_-)),y}.
\end{align*}
where $\delta$ is the standard Kronecker delta.

Let $e$ be a vertical side with $x(e)=x^+_P$. By assumption, we have $e\in D_{ext}$. By \eqref{eq:ext}, this gives $e\in D_v^+$. Moreover, if $e=e_v(v_+)$ or  $e=e_v(v_-)$, then by \eqref{eq:ext1} and \eqref{eq:ext3},
$\vep_1^b(v_+)=1$ or $\vep_1^e(v_-)=1$, respectively. It follows that
\[m^1_{x_P^+}=\sum_{e\in D_v^+, x(e)=x_P^+}2n_e+\vep_1^b(v_+)\delta_{x(e_v(v_+)),x_P^+}+\vep_1^e(v_-)\delta_{x(e_v(v_-)),x_P^+}\geq 0.\]
The same argument also show that all $m^1_{x_P^-}$, $m^2_{y_P^+}$, $m^2_{y_P^-}$ are non-negative integers.

In view of \eqref{eq:tautau}, at least one of $m^1_x$ for ${x}\in{X}_{P}\setminus\{{0}\}$ and $m^2_{{y}}$ for ${y}\in{Y}_{{P}}\setminus\{{0}\}$ is non-zero, and
\[\sum_{x\in X_P\setminus\{0\}}m^1_x x=\sum_{y\in Y_P\setminus\{0\}}m^2_y y,\]
which contradicts the assumptions of the theorem.
\end{proof}

\begin{corollary}\label{cor:EE}
For any pair $V_1,V_2:\R\to\R_{\geq 0}$ of $\mathcal{UM}$-potentials with $\deg(V_1,0)=\deg(V_2,0)=2$ and any $P\in\mathcal{RP}$, we have $\mathcal E(P,V_1,V_2)\subset \mathfrak E(V_1,V_2)$.
\end{corollary}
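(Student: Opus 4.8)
The plan is to trace a resonant pair $(E,\theta)$ through the unfolding of $P_{E,\theta}$, read off from the resulting saddle connection a linear relation with integer coefficients among the parameter functions $a,\bar a,b,\bar b$ and the $a_x,\bar a_x,b_y,\bar b_y$, and then apply Proposition~\ref{prop:indep0} to annihilate all the $a_x,\bar a_x,b_y,\bar b_y$ terms, leaving exactly the relation \eqref{eq:ga3} defining $\mathfrak E(V_1,V_2)$. So first I would fix $E\in\mathcal E(P,V_1,V_2)$ and take the uncountably many $\theta\in(0,E)$ for which $(E,\theta)$ is resonant; for each of these, the principle relating $(\varphi^P_t)_{t\in\R}$ to the billiard on $P_{E,\theta}$, together with Remark~\ref{rem:cyl} and Remark~\ref{rem:yoc}, yields a saddle connection in direction $\pi/4$ on the translation surface unfolding $P_{E,\theta}$. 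Repeating the computation in the proof of Theorem~\ref{thm:main-min} (starting from \eqref{eq:sumsum} and grouping the sides by parameter value) and comparing real and imaginary parts gives, for each such $\theta$, integers $m^1_x$ for $x\in X_{P_{E,\theta}}\setminus\{0\}$ and $m^2_y$ for $y\in Y_{P_{E,\theta}}\setminus\{0\}$, not all zero (the common value being $\tau/\sqrt2>0$), with the coefficients of the extreme parameters nonnegative, such that $\sum_x m^1_x x=\sum_y m^2_y y$.

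The next step is to localize. Since $\mathcal J_E$ is finite, some $I\in\mathcal J_E$ contains uncountably many of these $\theta$, and on $I$ the sets $X^\pm_I,Y^\pm_I$ are constant (Remark~\ref{rem:XYI}). By \eqref{eq:XY+-1}--\eqref{eq:XY+-2}, for $\theta\in I$ every parameter of $P_{E,\theta}$ is the value at $\theta$ of a function in the finite family $\Delta$ of Proposition~\ref{prop:indep0} associated with $X^\pm_I,Y^\pm_I$ (parameters with index $0$ vanish identically and are discarded), and it is constant along $I$ which function plays the role of each extreme parameter. Thus each resonant $\theta\in I$ gives an integer tuple $(\gamma_\delta)_{\delta\in\Delta}$, not all zero, with $\sum_{\delta\in\Delta}\gamma_\delta\delta(\theta)=0$; as $\Z^{|\Delta|}$ is countable, one tuple $(\gamma_\delta)$ must occur for uncountably many $\theta\in I$. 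For such a tuple, conclusion \eqref{neq:ga1} of Proposition~\ref{prop:indep0} rules out any nonzero $\gamma_\delta$ with $\delta\in\Delta_0$, so the relation collapses to $\gamma_a a(\theta)+\gamma_{\bar a}\bar a(\theta)+\gamma_b b(\theta)+\gamma_{\bar b}\bar b(\theta)=0$ for uncountably many $\theta\in I$, with $(\gamma_a,\gamma_{\bar a},\gamma_b,\gamma_{\bar b})$ integral and (since the $\Delta_0$-part is zero while the whole tuple is not) not all zero; this is the defining condition for $E\in\mathfrak E(V_1,V_2)$. When $V_1$ and/or $V_2$ is even, the same argument runs after identifying $\bar a,\bar a_x$ with $a,a_x$ (resp.\ $\bar b,\bar b_y$ with $b,b_y$) and using the even form of Proposition~\ref{prop:indep0}.

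Since everything substantial is already packaged in Proposition~\ref{prop:indep0} and in the saddle-connection formula \eqref{eq:sumsum}, the step requiring the most care is the matching of sign constraints: one must check that the nonnegativity of the extreme coefficients in \eqref{eq:sumsum} produces precisely the hypotheses $\gamma_a\gamma_{\bar a}\geq 0$ and $\gamma_b\gamma_{\bar b}\geq 0$ demanded both by Proposition~\ref{prop:indep0} and by the definition of $\mathfrak E$. The observation that makes this go through is that for $\theta$ in the open interval $I$ the marginal parameter $a(\theta)$ strictly exceeds every internal $a_x(\theta)$ with $x\in X^+_I$ (and likewise with bars, and for $b$ versus $b_y$), so the marginal right side of $P_{E,\theta}$ is present exactly when it is the extreme one; hence $\gamma_a$ and $\gamma_{\bar a}$ are either the two nonnegative coefficients of the extreme vertical parameters or else vanish because the relevant marginal side is absent. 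The remaining details --- that the standing assumptions of Theorem~\ref{thm:main-min} hold for $P_{E,\theta}$ (automatic for all but countably many $\theta$ in the non-even case, and subsumed by the identifications above in the even case) --- are routine.
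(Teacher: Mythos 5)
Your argument is correct, and it rests on exactly the same ingredients as the paper's proof (localization to an interval $I\in\mathcal J_E$ on which the combinatorial data are constant, the saddle-connection identity \eqref{eq:sumsum}, and Proposition~\ref{prop:indep0}); the difference is purely in the logical organization. The paper argues the contrapositive: assuming $E\notin\mathfrak E(V_1,V_2)$, it fixes an arbitrary integer tuple, shows via Proposition~\ref{prop:indep0} (or, for the pure $a,\bar a,b,\bar b$ part, via the definition of the countable exceptional set $J$) that the corresponding relation fails off a countable set, and then invokes Theorem~\ref{thm:main-min} as a black-box sufficient criterion for non-resonance. You instead run the implication forward: each resonant $\theta$ produces a relation, a pigeonhole over the countably many integer tuples extracts one relation holding on an uncountable set, and \eqref{neq:ga1} kills the $\Delta_0$-coefficients. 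The price of the forward direction is that you need the ``necessity'' content of the unfolding computation --- that a resonant $P_{E,\theta}$ genuinely yields a not-all-zero integer relation whose extreme/marginal coefficients are nonnegative, so that the sign hypotheses $\gamma_a\gamma_{\bar a}\geq 0$, $\gamma_b\gamma_{\bar b}\geq 0$ of both Proposition~\ref{prop:indep0} and the definition of $\mathfrak E$ are met. The paper only develops that bookkeeping later, in the openness part of the proof of Theorem~\ref{thm:main2} (the coefficients $p_a,p_{\bar a},q_b,q_{\bar b}\geq 0$ via \eqref{eq:ext}--\eqref{eq:ext4}), and your proof correctly identifies this as the delicate point and resolves it the same way; the contrapositive route simply sidesteps it. Both proofs also tacitly discard a countable set of $\theta$ where distinct parameter functions could coincide (so that the standing assumptions preceding Theorem~\ref{thm:main-min} hold), which is harmless in either direction.
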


\begin{proof}
Suppose that $E\notin \mathfrak E(V_1,V_2)$. Then the set $J\subset (0,E)$ of all $\theta\in (0,E)$ for which there exist  integers
$\gamma_{a}$,  $\gamma_{\bar{a}}$, $\gamma_{b}$, $\gamma_{\bar{b}}$  not all zero with
$\gamma_{a}\cdot\gamma_{\bar{a}}\geq 0$ and $\gamma_{b}\cdot\gamma_{\bar{b}}\geq 0$ such that
\begin{equation*}\label{eq:ga33}
\gamma_a a(\theta)+\gamma_{\bar{a}}\bar{a}(\theta)+\gamma_b b(\theta)+\gamma_{\bar{b}}\bar{b}(\theta)= 0
\end{equation*}
is at most countable.

We will show that for any $P\in\mathcal{RP}$, the polygon $P_{E,\theta}$ is not resonant for all but countably many $\theta\in (0,E)$. This gives $E\notin \mathcal E(P,V_1,V_2)$.
Take any $I\in \mathcal J_E$. In view of \eqref{eq:XY+-1} and \eqref{eq:XY+-2}, for  every $\theta\in I$, we have
\begin{align*}
&X^+_{P_{E,\theta}}\setminus\{x^+_{P_{E,\theta}}\}\subset\{a_\xi(\theta):\xi\in X^+_I\},\
X^-_{P_{E,\theta}}\setminus\{x^-_{P_{E,\theta}}\}\subset\{\bar{a}_\xi(\theta):\xi\in X^-_I\},\\
&x^+_{P_{E,\theta}}\in \{a(\theta)\}\cup \{a_\xi(\theta):\xi\in X^+_I\},\ x^-_{P_{E,\theta}}\in \{\bar{a}(\theta)\}\cup \{\bar{a}_\xi(\theta):\xi\in X^-_I\},\\
&Y^+_{P_{E,\theta}}\setminus\{y^+_{P_{E,\theta}}\}\subset\{b_\xi(\theta):\xi\in Y^+_I\},\
Y^-_{P_{E,\theta}}\setminus\{y^-_{P_{E,\theta}}\}\subset\{\bar{b}_\xi(\theta):\xi\in Y^-_I\},\\
&y^+_{P_{E,\theta}}\in \{b(\theta)\}\cup \{b_\xi(\theta):\xi\in Y^+_I\},\ y^-_{P_{E,\theta}}\in \{\bar{b}(\theta)\}\cup \{\bar{b}_\xi(\theta):\xi\in Y^-_I\}.
\end{align*}
Therefore, by Theorem~\ref{thm:main-min}, to prove that the billiard table $P_{E,\theta}$ is non-resonant for all but countably many $\theta\in I$,
it suffices to show that
for any choice of integers $\gamma_{a}$,  $\gamma_{\bar{a}}$,  $\gamma_{a_{\xi}}$ for $\xi\in X^+_I\setminus\{0\}$,
$\gamma_{\bar{a}_{\xi}}$ for $\xi\in X^-_I$, $\gamma_{b}$,  $\gamma_{\bar{b}}$,  $\gamma_{b_{\xi}}$ for $\xi\in Y^+_I\setminus\{0\}$,
$\gamma_{\bar{b}_{\xi}}$ for $\xi\in Y^-_I$ such that at least one number is non-zero and $\gamma_{a}\cdot \gamma_{\bar{a}}\geq 0$, $\gamma_{b}\cdot \gamma_{\bar{b}}\geq 0$,
we have
\begin{align}\label{eq:gms111}
\begin{split}
\gamma_{a}&a(\theta)+ \gamma_{\bar{a}}\bar{a}(\theta)+\sum_{\xi\in X^+_I\setminus\{0\}}\gamma_{a_{\xi}}a_{\xi}(\theta)+
\sum_{\xi\in X^-_I}\gamma_{\bar{a}_{\xi}}\bar{a}_{\xi}(\theta)\\
&+\gamma_{b}b(\theta)+ \gamma_{\bar{b}}\bar{b}(\theta)
+\sum_{\xi\in Y^+_I\setminus\{0\}}\gamma_{b_{\xi}}b_{\xi}(\theta)+
\sum_{\xi\in Y^-_I}\gamma_{\bar{b}_{\xi}}\bar{b}_{\xi}(\theta)\neq 0
\end{split}
\end{align}
for all but countably many $\theta\in I$. If $V_1$ is even, then $\bar{a}=a$ and $\bar{a}_\xi=a_\xi$, so we additionally assume that
$\gamma_{\bar{a}}=\gamma_{\bar{a}_{\xi}}=0$ for all $\xi\in X^+_I\cap X^-_I$.
Similarly, if $V_2$ is even, then  we additionally assume that
$\gamma_{\bar{b}}=\gamma_{\bar{b}_{\xi}}=0$ for all $\xi\in Y^+_I\cap Y^-_I$.

Suppose that at least one integer coefficient, $\gamma_{a_{\xi}}$ for $\xi\in X^+_I\setminus\{0\}$, or
$\gamma_{\bar{a}_{\xi}}$ for $\xi\in X^-_I$, or  $\gamma_{b_{\xi}}$ for $\xi\in Y^+_I\setminus\{0\}$, or
$\gamma_{\bar{b}_{\xi}}$ for $\xi\in Y^-_I$, is non-zero. Then, \eqref{eq:gms111} follows directly from the first part of Proposition~\ref{prop:indep0}.
If all the above $\gamma$'s are zero, then at least one integer coefficient $\gamma_{a}$,  $\gamma_{\bar{a}}$,
$\gamma_{b}$,  $\gamma_{\bar{b}}$ is non-zero. In view of the definition of the countable set $J$, we have
\begin{equation*}
\gamma_{a}a(\theta)+ \gamma_{\bar{a}}\bar{a}(\theta)+\gamma_{b}b(\theta)+ \gamma_{\bar{b}}\bar{b}(\theta)\neq 0\text{ for all }\theta\in I\setminus J,
\end{equation*}
which gives  \eqref{eq:gms111} as well.
\end{proof}

\section{On geometrically quasi-periodic maps}\label{sec:geomqper}
As we have seen in the proof of the second part of Proposition~\ref{prop:twobad}, the existence of two levels of resonant energies entails that the positive map $a+\bar{a}$ satisfies a certain condition of geometric quasi-periodicity.
\begin{definition}\label{def:qp}
We say that a map $f:(0,+\infty)\to\R$ is \emph{geometrically $\tau$-quasi-periodic} ($\tau>0$) if there exists $\gamma\neq 0$ such that
\[f(x+\tau)=\gamma f(x)\quad\text{for every}\quad x>0.\]
Then, $\gamma$ is called the exponent of quasi-periodicity. If $f:(0,+\infty)\to\R$ is additionally analytic, then it has an analytic extension to $\R$, and this extension  is  geometrically $\tau$-quasi-periodic
on $\R$.
\end{definition}

In this section, we will carry out a subtle analysis of positive analytic functions satisfying the geometric quasi-periodicity condition. We show that (see Theorem~\ref{thm:const}) if $a+\bar{a}$ is geometrically quasi-periodic, then it must be constant, and consequently, the potential is in $\mathcal{SP}$. The proof of this result is elementary, although technically involved. The main tools are of Fourier nature.

\begin{theorem}\label{thm:const}
Let $V_1,V_2:\R\to\R_{\geq 0}$ be $\mathcal{UM}$-potentials such that $\deg(V_1,0)=\deg(V_2,0)=2$.
Let $E>0$ and $\gamma>0$ be such that $a(\theta)+\bar{a}(\theta)=\gamma(b_E(\theta)+\bar{b}_E(\theta))$ for $\theta\in (0,E)$.
If the map $ a+\bar{a}:(0,+\infty)\to\R$ is geometrically quasi-periodic, then $ a+\bar{a}$ and  $ b_E+\bar{b}_E$ are constant, and $V_1,V_2\in\mathcal{SP}$.
\end{theorem}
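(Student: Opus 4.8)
The plan is to reduce the theorem to a rigidity statement for the single positive analytic function $\Phi:=a+\bar{a}$, and then to exploit two features: that $\Phi$ is (up to a positive constant) a Riemann–Liouville half–integral of a \emph{positive} function, and that the hypothesis $a+\bar{a}=\gamma(b_E+\bar{b}_E)$ forces the analogous function built from $V_2$ to be geometrically quasi–periodic with the \emph{reciprocal} exponent. For the reduction: by \eqref{eq:formofa} with $m_1=2$ we have $\sqrt{2}(a+\bar{a})(\theta)=\int_0^1 \frac{g_1(\sqrt{\theta}\,s)}{\sqrt{1-s^2}}\,ds=\frac{\pi}{2}\sum_{n\ge 0}\frac{g_1^{(2n)}(0)}{(2n)!}\binom{2n}{n}4^{-n}\theta^n$, where $g_1:=W_1'+\bar{W}_1'$ is an even entire function; in particular $\Phi$ extends to an entire function. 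By Remark~\ref{rem:SP} it is enough to show $\Phi$ is constant: then $g_1^{(2n)}(0)=0$ for $n\ge 1$, so $W_1'+\bar{W}_1'$ is constant and $V_1\in\mathcal{SP}$, and $b_E+\bar{b}_E=\Phi/\gamma$ is then constant, giving $V_2\in\mathcal{SP}$ by the same computation. Moreover $\Phi>0$ on all of $\R$: on $(0,+\infty)$ it equals the positive function $a+\bar{a}$, while on $(-\infty,E)$ it equals $\gamma\,\Psi(E-\theta)$, where $\Psi(\sigma):=\frac{1}{\sqrt{2}}\int_0^1 \frac{(W_2'+\bar{W}_2')(\sqrt{\sigma}\,s)}{\sqrt{1-s^2}}\,ds$ is entire and positive on $[0,+\infty)$ and $(b_E+\bar{b}_E)(\theta)=\Psi(E-\theta)$ by \eqref{eq:formofa}.

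Next I would set up the structure. Since $\Phi>0$, its quasi–periodicity exponent is some $\lambda>0$; writing $\mu:=\tau^{-1}\log\lambda$ gives $\Phi(\theta)=e^{\mu\theta}P(\theta)$ with $P$ entire, $\tau$–periodic and positive, hence $P(\theta)=\sum_{k\in\Z}\hat{p}_k e^{2\pi i k\theta/\tau}$ with $(\hat{p}_k)$ decaying faster than any geometric sequence. From $\Phi(\theta)=\gamma\,\Psi(E-\theta)$ it follows that $\Psi(\sigma+\tau)=\lambda^{-1}\Psi(\sigma)$, so $\Psi(\sigma)=e^{-\mu\sigma}R(\sigma)$ with $R$ entire, $\tau$–periodic, positive. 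The substitution $t=\theta s^2$ in \eqref{eq:formofa} turns both representations into half–integrals: $\Phi(\theta)=c_1\int_0^\theta \frac{h_1(t)}{\sqrt{\theta-t}}\,dt$ and $\Psi(\sigma)=c_2\int_0^\sigma \frac{h_2(t)}{\sqrt{\sigma-t}}\,dt$ for positive constants $c_1,c_2$ and positive functions $h_j(t)=t^{-1/2}(W_j'+\bar{W}_j')(\sqrt{t})$. By Abel inversion $h_j(\theta)=\frac{1}{c_j\pi}\frac{d}{d\theta}\int_0^\theta \frac{\Phi_j(t)}{\sqrt{\theta-t}}\,dt$ with $\Phi_1=\Phi$, $\Phi_2=\Psi$; so positivity of $h_j$ says precisely that $\theta\mapsto\int_0^\theta \frac{\Phi_j(t)}{\sqrt{\theta-t}}\,dt$ is strictly increasing.

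Then I would apply this in two steps. \emph{First, $\mu=0$.} Replacing $\Phi$ by $\Psi$ if necessary (which replaces $\lambda$ by $\lambda^{-1}$ and preserves all the above), assume $\mu\le 0$; suppose $\mu<0$. Then $\Phi(\theta)=e^{\mu\theta}P(\theta)\to 0$, and splitting the integral at $\theta/2$ shows $\int_0^\theta \frac{\Phi(t)}{\sqrt{\theta-t}}\,dt\to 0$ as $\theta\to+\infty$ — impossible for a strictly increasing, strictly positive function. Hence $\mu=0$, $\lambda=1$, and $\Phi,\Psi$ are $\tau$–periodic. \emph{Second, periodic $\Rightarrow$ constant.} Now $\frac{d}{d\theta}\int_0^\theta \frac{\Phi(t)}{\sqrt{\theta-t}}\,dt=\frac{\Phi(0)}{\sqrt{\theta}}+\int_0^\theta \frac{\Phi'(\theta-u)}{\sqrt{u}}\,du$; since $\Phi'$ has zero mean, $\Phi'(\theta)=\sum_{k\ne 0}\frac{2\pi i k}{\tau}\hat{p}_k e^{2\pi i k\theta/\tau}$, and inserting this, using the Fresnel evaluation $\int_0^\infty u^{-1/2}e^{-i\alpha u}\,du=\sqrt{\pi/(i\alpha)}$ and the tail bound $\int_\theta^\infty u^{-1/2}e^{-i\alpha u}\,du=O\big((|\alpha|\sqrt{\theta})^{-1}\big)$, one gets $\int_0^\theta \frac{\Phi'(\theta-u)}{\sqrt{u}}\,du=B(\theta)+O(\theta^{-1/2})$, where $B(\theta):=\sqrt{\pi}\sum_{k\ne 0}\sqrt{\tfrac{2\pi i k}{\tau}}\,\hat{p}_k e^{2\pi i k\theta/\tau}$ is a bounded, real, $\tau$–periodic function of zero mean. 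Hence $c_1\pi\,h_1(\theta)=B(\theta)+O(\theta^{-1/2})$; evaluating along $\theta_0+n\tau$ and letting $n\to\infty$, positivity of $h_1$ forces $B(\theta_0)\ge 0$ for every $\theta_0$, and a non-negative continuous $\tau$–periodic function of zero mean vanishes identically. Thus $\hat{p}_k=0$ for all $k\ne 0$, so $\Phi$ is constant, which by the reduction finishes the proof.

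The technical heart — and the step I expect to be the hardest to make fully rigorous — is the last one: controlling the half–integral/Fourier asymptotics, in particular the Fresnel computation identifying $\int_0^\theta \Phi'(\theta-u)u^{-1/2}\,du$ with a bounded mean–zero periodic function up to an $O(\theta^{-1/2})$ error uniformly enough to deduce $B\ge 0$ from $h_1>0$; one must also carry the even–potential bookkeeping (where $a=\bar{a}$ and $b_E=\bar{b}_E$, so only one copy of each function appears) throughout. This is presumably where the Fourier–analytic machinery of this section is spent.
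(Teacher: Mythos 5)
Your proposal is correct and follows the same overall strategy as the paper: reduce via Remark~\ref{rem:SP} to showing that a positive analytic function in the image of the Abel-type operator $A$ of \eqref{def:A} cannot be non-trivially geometrically quasi-periodic, split into cases according to the sign of the exponent, and in the periodic case use the asymptotics of the half-integral of a trigonometric series to force a mean-zero non-negative periodic function (your $B$, the paper's $\breve{g}$ in Lemma~\ref{lem:estpos}) to vanish. Your zero-exponent step is essentially identical to the paper's Lemma~\ref{lem:estpos} with $\xi=0$, including the Fresnel constant $\sqrt{\pi/(i\alpha)}$ and the $O(\theta^{-1/2})$ tail bound, which is the paper's \eqref{eq:gammapm1}. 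Where you genuinely diverge is in two places, both to your advantage in terms of economy. First, instead of constructing the explicit $A$-preimages $\rho_{\xi,k}$ of the exponentials and summing them (Lemmas~\ref{lem:Agamma} and \ref{lem:gammac}), you invoke Abel inversion, i.e.\ the semigroup identity $\int_0^\theta \Phi(t)(\theta-t)^{-1/2}dt=c_1\pi\int_0^\theta h_1(t)\,dt$, so that positivity of the source $h_1$ is read off directly as monotonicity of the half-integral of $\Phi$. Second, and more substantially, your treatment of the non-unit exponent is much lighter than the paper's: the paper needs the full asymptotic expansion \eqref{eq:gammapm00} of $\rho_{-\xi,-k}$ (the triple integration by parts) and Lemma~\ref{lem:estneg} to identify a $1/x$ leading term and compare it with $\int_0^{+\infty}A(f_1)$, whereas you only need the crude observation that the left-hand side of the semigroup identity tends to $0$ when $\Phi$ decays exponentially, while the right-hand side is positive and increasing. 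Two cosmetic points: the functions $W_j'$ are only real-analytic, not entire, so $\Phi$ is a priori analytic only on a neighbourhood of $[0,+\infty)$ and its extension to all of $\R$ comes from the quasi-periodicity relation itself (as in Definition~\ref{def:qp}); and the Fourier coefficients $\hat{p}_k$ of the analytic periodic factor decay exponentially, not faster than every geometric sequence --- exponential decay is exactly what you need for $\sum_k|k|^{1/2}|\hat{p}_k|<\infty$ and the uniformity of the $O(\theta^{-1/2})$ error, so nothing is lost.
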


We first prove {Lemma~\ref{lem:evenU}}, which also plays an important role in the proof of Proposition~\ref{prop:twobad}.
Let $\gamma$, $\bar{\gamma}$ be non-negative coefficients such that at least one is positive.
For any $\mathcal{UM}$-potential $V:\R\to\R_{\geq 0}$  with $\deg(V,0)=2$,  consider the analytic function $U:\R\to \R_{\geq 0}$ given by
\[U(x)=\gamma W'(x)+\bar{\gamma}\bar{W}'(x)=
\gamma W'(x)+\bar{\gamma}{W}'(-x).\]
Recall that $V_*:\R\to\R$ is a bi-analytic map such that $(V_*)^2=V$ and $W=(V_*)^{-1}$.
Then $\gamma a+\bar{\gamma}\bar{a}:(0,+\infty)\to\R$ is analytic and
\begin{equation}\label{eq:U}
\gamma a(\theta)+\bar{\gamma}\bar{a}(\theta)=\frac{1}{\sqrt{2}}\int_0^1\frac{U(\sqrt{\theta}s)}{\sqrt{1-s^2}}ds\text{ for every }\theta\geq 0.
\end{equation}
If the map  $\gamma a+\bar{\gamma}\bar{a}:(0,+\infty)\to\R$ is additionally geometrically quasi-periodic, then
it has an analytic geometrically quasi-periodic extension to $\R$.

\begin{lemma}\label{lem:evenU}
Suppose that $\gamma a+\bar{\gamma}\bar{a}:(0,+\infty)\to\R$ has an analytic extension to $[0,+\infty)$.
Then, the map $U$ is even. If, additionally, $V$ is not even, then $\gamma=\bar{\gamma}$.
\end{lemma}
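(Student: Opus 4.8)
The plan is to read off the parity of $U$ from the integral representation \eqref{eq:U}, using that the kernel $s\mapsto (1-s^2)^{-1/2}$ has all its moments positive, so that the exponents occurring in a power-series expansion of $\gamma a+\bar\gamma\bar a$ near $0$ are governed exactly by the nonzero Taylor coefficients of $U$ at $0$.

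First I would expand $U$ at the origin: since $W$ is bi-analytic, $U(x)=\gamma W'(x)+\bar\gamma W'(-x)$ is analytic on all of $\R$, so $U(x)=\sum_{n\ge 0}c_n x^n$ for $|x|<R$ with $R>0$. Substituting this into \eqref{eq:U} and integrating term by term (legitimate by uniform convergence once $\sqrt{\theta}\,s<R$) gives, for $0\le\theta<R^2$,
\[
\sqrt{2}\,\bigl(\gamma a(\theta)+\bar\gamma\bar a(\theta)\bigr)=\sum_{n\ge 0}c_n I_n\,\theta^{n/2},\qquad I_n:=\int_0^1\frac{s^n}{\sqrt{1-s^2}}\,ds>0 .
\]
The key point is that, in the variable $u=\sqrt{\theta}$, the right-hand side is an honest convergent power series $\sum_{n\ge 0}c_nI_n u^n$ near $u=0$. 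Now I would invoke the hypothesis: $\gamma a+\bar\gamma\bar a$ extends analytically to $[0,+\infty)$, so near $0$ it equals a power series in integer powers of $\theta$, i.e.\ an even power series in $u$. Matching the two power series in $u$ forces the odd coefficients to vanish, $c_n I_n=0$ for all odd $n$, whence $c_n=0$ for all odd $n$ because $I_n>0$. Thus the Taylor series of $U$ at $0$ contains only even terms, and since $x\mapsto U(x)-U(-x)$ is analytic on $\R$ and flat at $0$, it vanishes identically: $U$ is even.

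For the second assertion I would argue by contradiction. If $V$ is not even and $\gamma\ne\bar\gamma$, then $0=U(x)-U(-x)=(\gamma-\bar\gamma)\bigl(W'(x)-W'(-x)\bigr)$ gives that $W'$ is even, hence $W$ is odd (using $W(0)=0$), hence $W=\bar W$, hence $V_*=\bar V_*$ and $V=V_*^2=\bar V_*^2=\bar V$ is even, a contradiction; so $\gamma=\bar\gamma$. I do not expect a real obstacle here: the only care needed is the term-by-term integration (uniform convergence) and the identity-of-power-series step, and it is precisely in forbidding half-integer powers of $\theta$ that the analytic-extension hypothesis is used, with the positivity $I_n>0$ converting $c_nI_n=0$ into $c_n=0$.
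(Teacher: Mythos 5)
Your proposal is correct, and it reaches the conclusion by a genuinely different (and more elementary) route than the paper. The paper proves $U^{(2n+1)}(0)=0$ for all $n$ by an induction on differential operators: it defines $a_{n+1}=2a_n'+4\theta a_n''$, shows inductively that each $a_n$ is analytic on $[0,+\infty)$ with $a_n(\theta)=\int_0^1 U^{(2n)}(\sqrt{\theta}s)s^{2n}(1-s^2)^{-1/2}\,ds$, and then lets $\theta\searrow 0$ in the identity $2\sqrt{\theta}\,a_n'(\theta)=\int_0^1 U^{(2n+1)}(\sqrt{\theta}s)s^{2n+1}(1-s^2)^{-1/2}\,ds$ to kill the odd derivative, using the positivity of $\int_0^1 s^{2n+1}(1-s^2)^{-1/2}\,ds$. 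You instead Taylor-expand $U$ at $0$, integrate term by term (justified exactly as you say, since the partial sums converge uniformly on $[0,1]$ and the kernel is integrable), and match the resulting series $\sum_n c_nI_n\theta^{n/2}$ against the integer-power series supplied by the analytic extension; the same positivity $I_n>0$ then forces $c_n=0$ for odd $n$. Both arguments use the hypothesis in the same essential way (the extension to $\theta=0$ forbids half-integer powers of $\theta$), but yours avoids the induction and the repeated differentiation under the integral sign, at the cost of being purely local at $\theta=0$ — which is all the lemma needs, since $U$ is analytic on $\R$ and flatness of $U(x)-U(-x)$ at $0$ propagates globally. For the second assertion your argument ($U$ even and $\gamma\neq\bar\gamma$ force $W'$ even, hence $W$ odd, hence $V$ even) is a clean equivalent of the paper's derivative-level computation $0=U^{(2n-1)}(0)=(\gamma-\bar\gamma)W^{(2n)}(0)$; both are fine.
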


\begin{proof}
As $U$ is analytic, to prove that $U$ is even, it suffices to show that
\begin{equation}\label{eq:oddder}
U^{(2n+1)}(0)=0\quad \text{for every} \quad n\geq 0.
\end{equation}
We will show that there exist analytic maps $a_n:[0,+\infty)\to\R$ for $n\geq 0$ such that for every $n\geq 0$, we have
\begin{align}
\label{eq:a2n}
a_{n}(\theta)&=\int_0^1\frac{U^{(2n)}(\sqrt{\theta}s)s^{2n}}{\sqrt{1-s^2}}ds\text{ for all }\theta>0.
\end{align}
The proof is by induction on $n$. For $n=0$, \eqref{eq:a2n} follows from \eqref{eq:U} with $a_0=\sqrt{2}(\gamma a+\bar\gamma \bar a)$.

Suppose that \eqref{eq:a2n} holds for some $n\geq 0$ with $a_n$ analytic on $[0,+\infty)$. Then, after differentiating with respect to $\theta$ and multiplying by $2\sqrt{\theta}$, we
obtain
\begin{align}
\label{eq:a2n+1}
2\sqrt{\theta}a'_{n}(\theta)&=\int_0^1\frac{U^{(2n+1)}(\sqrt{\theta}s)s^{2n+1}}{\sqrt{1-s^2}}ds\text{ for all }\theta>0.
\end{align}
Repeating the same operation, we get
\[a_{n+1}(\theta):=2a'_{n}(\theta)+4\theta a''_{n}(\theta)=\int_0^1\frac{U^{(2n+2)}(\sqrt{\theta}s)s^{2n+2}}{\sqrt{1-s^2}}ds\text{ for all }\theta>0,
\]
which gives \eqref{eq:a2n} for $n+1$  with $a_{n+1}$ analytic on $[0,+\infty)$.

Since both sides of \eqref{eq:a2n+1} are continuous on $[0,+\infty)$, for every $n\geq 0$, we have
\[0=2\sqrt{0}a'_{n}(0)=\int_0^1\frac{U^{(2n+1)}(0)s^{2n+1}}{\sqrt{1-s^2}}ds=U^{(2n+1)}(0)\int_0^1\frac{s^{2n+1}}{\sqrt{1-s^2}}ds.\]
This shows that $U$ is even.

Suppose that $V$ is not even. Then $W$ is not odd, so $W^{(2n)}(0)\neq 0$ for some $n\geq 1$.
As $\bar{W}(x)=-W(-x)$, for every $n\geq 0$, we have $\bar{W}^{(2n)}(x)=-W^{(2n)}(-x)$. Hence,
\[0=U^{(2n+1)}(0)=\gamma W^{(2n)}(0)+\bar{\gamma}\bar{W}^{(2n)}(0)=(\gamma-\bar{\gamma})W^{(2n)}(0).\]
Therefore, $\gamma=\bar{\gamma}$.
\end{proof}


From now on, we always assume that $U(x)=W'(x)+\bar{W}'(x)=
W'(x)+{W}'(-x)$.
Since $U$ is analytic and even, there exists a positive analytic function $u:\R_{\geq 0}\to \R$ so that $U(x)=u(x^2)$ for all $x\in\R$ and
\[ a(\theta)+\bar{a}(\theta)=\frac{1}{\sqrt{2}}\int_0^1\frac{U(\sqrt{\theta}s)}{\sqrt{1-s^2}}ds=
\frac{1}{\sqrt{2}}\int_0^1\frac{u(\theta s^2)}{\sqrt{1-s^2}}ds=
\frac{1}{\sqrt{2}}\int_0^{\pi/2} u(\theta \sin^2s)ds.\]
Moreover, $u$ has an analytic extension $u:[-\vep,+\infty)\to\R$ for some $0<\vep\leq +\infty$ such that $u(x)>0$ for $x\geq -\vep$.

\medskip

For any convex $\Omega\subset \C$ such that $\R\subset\operatorname{Int}\Omega$, we denote by $H(\Omega)$ the Fr\'echet space of  holomorphic maps on $\Omega$
equipped with the topology given by the sequence of seminorms
\[\|f\|^{\Omega}_n=\sup\{|f(z)|:z\in \Omega,|z|\leq n\}.\]
For every $\delta>0$, let
\[B_\delta=\{z\in\C:|z|\leq \delta\},\ P_\delta=\{z\in\C:\rp z\geq -\delta\},\text{ and } S_\delta=\{z\in\C:|\ip z|\leq \delta\}.\]

\medskip

For any $\vep>0$, let us consider the linear operator $A:C^{\omega}(\R_{\geq -\vep})\to C^{\omega}(\R_{\geq -\vep})$ given by
\begin{equation}\label{def:A}
A(f)(\theta)=\int_0^{\pi/2} f(\theta \sin^2s)ds=\int_0^1\frac{f(\theta s^2)}{\sqrt{1-s^2}}ds.
\end{equation}
Then, for every $\theta\geq 0$, we have
\[A(f)(\theta)=\frac{1}{2}\int_0^\theta\frac{f(s)}{\sqrt{\theta-s}\sqrt{s}}ds.\]
In fact, for every $0<\delta\leq +\infty$, the operator $A$ extends to a continuous operator
$A:H(P_\vep\cap S_\delta)\to H(P_\vep\cap S_\delta)$ given by \eqref{def:A} for every $\theta\in S_\delta$.
Indeed, for every $n\geq 1$, we have $\|A(f)\|_n^{P_\vep\cap S_\delta}\leq \tfrac{\pi}{2}\|f\|_n^{P_\vep\cap S_\delta}$.
All these extensions are one-to-one. Indeed, if $A(f)=0$ then
\[0=A(f)^{(n)}(0)=f^{(n)}(0)\int_0^{\pi/2}\sin^{2n}(s)ds=f^{(n)}(0) \pi\frac{(2n-1)!!}{(2n)!!}\]
for every $n\geq 0$. Hence, $f^{(n)}(0)=0$ for every $n\geq 0$, so $f=0$.

\begin{remark}\label{rmk:rescal}
Note that the operator $A$ commutes with any real rescaling, i.e., for any $r\in\R$, we have
$A(f\circ r)=A(f)\circ r$, where the map $r:\R\to\R$ is the linear rescaling by $r$ introduced in Remark~\ref{rem:exN}.
\end{remark}

To prove Theorem~\ref{thm:const}, we need the following result.

\begin{proposition}\label{prop:mainqper}
If $f_1,f_2\in C^{\omega}(\R_{\geq -\vep})$ for some $\vep>0$ are such that
\begin{itemize}
\item $f_1(x), f_2(x)>0$ for all $x\geq -\vep$;
\item $A(f_1)$ is geometrically quasi-periodic;
\item there exists $x_0>0$ such that $A(f_1)(x)=A(f_2)(x_0-x)$ for all $x\in \R$,
\end{itemize}
then $f_1$,  $f_2$, $A(f_1)$, and $A(f_2)$ are constant.
\end{proposition}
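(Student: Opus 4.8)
The plan is to exploit the integral representation $A(f)(\theta)=\frac12\int_0^\theta f(s)/(\sqrt{\theta-s}\sqrt{s})\,ds$, which exhibits $A$ as (a variant of) a fractional integration operator, and to combine the two hypotheses to force an algebraic rigidity on the Fourier/Laplace side. Write $g_1:=A(f_1)$ and $g_2:=A(f_2)$. The geometric quasi-periodicity gives $g_1(x+\tau)=\lambda g_1(x)$ for some $\tau>0$ and $\lambda\neq 0$; since $g_1>0$ on $[0,\infty)$ (because $f_1>0$ and the kernel is positive) and $g_1$ is not identically zero, positivity of $g_1$ on all of $[0,\infty)$ forces $\lambda>0$. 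So $h_1(x):=e^{-\mu x}g_1(x)$ with $\mu:=\tau^{-1}\log\lambda$ is genuinely $\tau$-periodic and real-analytic, hence has a convergent Fourier expansion $h_1(x)=\sum_{k\in\Z}c_k e^{2\pi i kx/\tau}$, i.e.\ $g_1(x)=\sum_k c_k e^{(\mu+2\pi ik/\tau)x}$, an exponential sum. The third hypothesis then reads $g_2(x)=g_1(x_0-x)=\sum_k c_k e^{(\mu+2\pi ik/\tau)(x_0-x)}$, so $g_2$ is also an exponential sum, with exponents $-(\mu+2\pi ik/\tau)$. The first step is therefore to establish: \emph{if $A(f)$ is an exponential sum $\sum_j b_j e^{\beta_j\theta}$ with $f\in C^\omega(\R_{\ge-\vep})$, then $f$ must have a compatible form}. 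Using the substitution $\theta s^2=u$ and the Beta-function identity $\int_0^1 s^{2n}(1-s^2)^{-1/2}ds = \frac\pi2\binom{2n}{n}4^{-n}$, one sees that $A$ acts on $\sum a_n\theta^n$ coefficientwise by multiplication by $\frac\pi2\binom{2n}{n}4^{-n}$; comparing Taylor coefficients of $A(f)$ with those of the prescribed exponential sum pins down the $a_n$, and in particular shows $f$ itself extends to an entire function of exponential type whose Laplace/Borel transform is supported on a finite set — i.e.\ $f$ is again an exponential sum, $f_1(\theta)=\sum_k d_k e^{(\mu+2\pi ik/\tau)\theta}$ with the $d_k$ determined from the $c_k$ (concretely, $A$ multiplies the $e^{\beta\theta}$ "mode" by a transcendental factor $\Phi(\beta)$ obtained by termwise summation, and one must check $\Phi$ never vanishes on the relevant exponents, so the correspondence is a bijection on exponential sums with those exponents).

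The second step is to use the \emph{two} representations of $f_1$. From $g_2=A(f_2)$ and the explicit computation above, $f_2(\theta)=\sum_k d_k' e^{-(\mu+2\pi ik/\tau)\theta}$ for suitable $d_k'$. But now apply the substitution/rescaling Remark~\ref{rmk:rescal} together with the fact that both $f_1$ and $f_2$ are \emph{positive} real-analytic functions on a half-line $[-\vep,\infty)$: a real exponential sum that stays positive on an entire right half-line cannot have any exponent with strictly positive real part beyond the leading one unless all non-leading coefficients vanish — more precisely, as $\theta\to+\infty$ the term with the largest real part of the exponent dominates, and for this to be real and positive that dominant exponent must be real; iterating (peeling off the dominant real exponential and noting the remainder is again positive on a half-line, since $f_1$ real-analytic positive forces it) one concludes all exponents are real. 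The same argument applied to $f_2(\theta)=\sum_k d_k' e^{-(\mu+2\pi ik/\tau)\theta}$, which must be positive as $\theta\to+\infty$, forces the exponents $-(\mu+2\pi ik/\tau)$ to be real as well. Together these say the only surviving mode is $k=0$ and $\mu\in\R$; so $f_1(\theta)=d_0 e^{\mu\theta}$ and $g_1(\theta)=b_0 e^{\mu\theta}$ for constants $d_0,b_0>0$.

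The third step eliminates $\mu\neq 0$. If $\mu>0$ then $g_1(\theta)=b_0 e^{\mu\theta}$ grows, but $g_2(x)=g_1(x_0-x)=b_0 e^{\mu x_0}e^{-\mu x}\to 0$ as $x\to+\infty$; yet $g_2=A(f_2)$ with $f_2>0$ bounded below by $f_2(0)>0$ near $0$ and, being a positive exponential $f_2(\theta)=d_0'e^{-\mu\theta}$, we get $A(f_2)(\theta)=\tfrac12\int_0^\theta d_0' e^{-\mu s}/(\sqrt{\theta-s}\sqrt s)\,ds$, which is strictly positive and in fact bounded away from $0$ for $\theta$ in a neighborhood of any fixed point — a direct contradiction with the requirement, inherited from $g_1$'s quasi-periodicity relation, that forces consistency on all of $[0,\infty)$; symmetrically $\mu<0$ is excluded by swapping the roles. (Alternatively and more cleanly: $f_1(\theta)=d_0e^{\mu\theta}$ and $f_2(\theta)=d_0'e^{-\mu\theta}$ with $A(f_1)(x)=A(f_2)(x_0-x)$; plug $\theta=0$ type expansions / compare the value at a symmetric point $x=x_0/2$ and use that $A$ of a non-constant positive exponential is strictly monotone while the composition $x\mapsto A(f_2)(x_0-x)$ is strictly anti-monotone, forcing $\mu=0$.) Hence $\mu=0$: $f_1$, $f_2$, $A(f_1)=A(f_2)$ are all constant, which is the claim. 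I expect the \textbf{main obstacle} to be Step~1 — rigorously upgrading "$A(f)$ is an exponential sum" to "$f$ is an exponential sum with the same frequencies," i.e.\ inverting $A$ on the class of exponential sums and verifying the multiplier $\Phi(\beta)=\sum_{n\ge0}\frac\pi2\binom{2n}{n}4^{-n}\beta^n/n!$ (an explicit Bessel-type function) is nonvanishing on the purely imaginary shifts $\mu+2\pi ik/\tau$ that occur; the positivity-forces-real-exponents argument in Step~2 and the monotonicity contradiction in Step~3 are comparatively soft.
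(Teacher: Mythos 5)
Your overall architecture (Fourier-expand the quasi-periodic function $A(f_1)$, pull back through $A$, and play positivity of $f_1$ against positivity of $f_2$ to kill the exponent) matches the paper's, but two of your key claims are false and the argument breaks at both places.

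First, Step 1 rests on the premise that $A$ acts as a multiplier on exponentials, $A(e^{\beta\theta})=\Phi(\beta)e^{\beta\theta}$. It does not. You correctly note that $A(\theta^n)=c_{2n}\theta^n$ with $c_{2n}=\int_0^1 s^{2n}(1-s^2)^{-1/2}\,ds$, but then $A(e^{\beta\theta})(\theta)=\sum_n c_{2n}\beta^n\theta^n/n!$, and since $c_{2n}\sim \tfrac{\sqrt{\pi}}{2}n^{-1/2}$ is not of the form $C\lambda^n$, this series is a Bessel-type function of $\beta\theta$, not a constant times $e^{\beta\theta}$. Consequently $A^{-1}(e^{(\xi+ik)\theta})$ is \emph{not} an exponential: the paper shows it equals $\rho_{\xi,k}(x)=\tfrac{2}{\pi}\bigl(2(\xi+ik)e^{(\xi+ik)x}\rho_2(x)+1\bigr)$ with $\rho_2$ an incomplete Gaussian integral, whose leading behaviour is $\tfrac{2}{\sqrt{\pi}}\sqrt{\xi+ik}\,\sqrt{x}\,e^{(\xi+ik)x}$ when $\xi>0$ and a polynomially decaying tail $-\tfrac{1}{\pi(\xi+ik)x}$ when $\xi<0$. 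So $f_1$ is not an exponential sum, and the entire reduction of Steps 2--3 to exponential sums is unavailable; the asymptotics of these special preimages are the real technical content of the proof.

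Second, even if $f_1$ were an exponential sum $\sum_k d_k e^{(\mu+2\pi ik/\tau)\theta}$, your positivity argument in Step 2 fails: all the exponents share the same real part $\mu$, so there is no single dominant term to peel off, and positivity on a half-line does not force a single mode --- $e^{\mu\theta}(2+\cos\theta)$ is a positive exponential sum with non-real exponents. What positivity actually yields (via the $\sqrt{x}\,e^{\xi x}$ asymptotics of the true preimages) is only the inequality $\sum_k\sqrt{\xi+ik}\,c_ke^{ikx}\geq 0$; this kills the nonzero modes only when $\xi=0$, because then the nonnegative trigonometric polynomial has zero mean and must vanish identically. For $\xi\neq 0$ the paper argues completely differently: applying the $1/x$-tail asymptotics to whichever of $f_1,f_2$ has the decaying exponent gives $\sum_k c_k/(\xi-ik)\leq 0$, which contradicts $\int_0^{+\infty}A(f)\,dx>0$. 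Your Step 3 (eliminating $\mu\neq 0$ after reducing to a single exponential) is therefore never reached in a correct argument, and the "monotone vs.\ anti-monotone" alternative you sketch there would anyway need the single-mode reduction that Step 2 cannot deliver.
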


\begin{remark}
In view of Remark~\ref{rmk:rescal}, after some rescaling of $f$ we can assume that  $A(f)$ is geometrically $2\pi$-quasi-periodic.
Suppose that $A(f)(\theta+2\pi)=e^{2\pi\xi}A(f)(\theta)$ for some $\xi\in\R$.
As $e^{-\xi\theta}A(f)(\theta)$ is  $2\pi$-periodic and analytic,
there exists  an exponentially decaying sequence $(c_n)_{n\in\Z}$ of complex numbers such that $c_{-n}=\bar{c}_n$ for all $n\in \Z$, and
\[A(f)(\theta)=\sum_{n\in\Z}c_ne^{(\xi+in)\theta}.\]
\end{remark}

In the next part of this section, we will calculate the pre-image of the basis elements $e^{(\xi+in)\theta}$, which will form the basis for further Fourier analysis of the function  $f$. {We start by defining two  special functions, $\rho_1$ and $\rho_2$.
Let us consider  $\rho_1\in H(\C)$ given by
\[\rho_1(z)=z\int_0^ze^{-s^2}ds=\frac{\sqrt{\pi}}{2}z\erf(z)=\frac{\sqrt{\pi}}{2}z(1-\erfc(z)),
\]
where $\erf$ is the Gauss error function and  $\erfc$ is the complementary error function.
As $\rho_1(-z)=\rho_1(z)$, there exists
$\rho_2\in H(\C)$ such that
$\rho_1(z)=\rho_2(z^2)$ for all $z\in\C$ and $|\rho_2(z)|\leq |z| e^{|\rp (z)|}$.
Denote by $z\mapsto \sqrt{z}$ the principal branch of the square root. Then
\begin{equation}\label{rho2 Gamma}
\rho_2(z)=\frac{\sqrt{z}}{2}(\sqrt{\pi}-\Gamma(\tfrac{1}{2},z)),
\end{equation}
where $\Gamma$ is the upper incomplete Gamma function.
For any $\xi\in\R$ and $k\in\Z$, let  $\rho_{\xi,k}\in H(\C)$ be given by
\begin{equation}\label{def:rho}
\rho_{\xi,k}(z)=\frac{2}{\pi}\big(2e^{(\xi+ik)z}\rho_2((\xi+ik)z)+1\big).
\end{equation}
Then
\begin{equation}\label{eq:gamma3neq}
|\rho_{\xi,k}(z)|\leq \frac{2}{\pi}\big(2|\xi+ik||z| e^{2|\rp((\xi+ik) z)|}+1\big)\quad\text{for all}\quad
z\in\C,
\end{equation}
and, for every $x\geq 0$, we have
\begin{align}\label{eq:rho2}
\begin{split}
\rho_{\xi,k}(x)&=\frac{2}{\pi}\big(2e^{(\xi+ik)x}(\xi+ik)\sqrt{x}\int_0^{\sqrt{x}}e^{-(\xi+ik)s^2}ds+1\big)\\
&= \frac{2}{\pi}\big(e^{(\xi+ik)x}(\xi+ik)\sqrt{x}\int_0^{x}\frac{e^{-(\xi+ik)s}}{\sqrt{s}}ds+1\big).
\end{split}
\end{align}

\begin{lemma}\label{lem:Agamma}
For all $k\in\Z$ and $\xi\in\R$, we have $A(\rho_{\xi,k})(\theta)=e^{(\xi+ik)\theta}$ for every $\theta\in\C$.
\end{lemma}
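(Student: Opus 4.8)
The plan is to reduce the asserted identity to an explicit double integral that can be evaluated in polar coordinates. Write $\lambda:=\xi+ik$. The case $\lambda=0$ (i.e.\ $\xi=0$ and $k=0$) is immediate: then only the $n=1$ term of the defining series of $\rho_2$ survives, so $\rho_2(z)=z$, hence $\rho_{\xi,k}\equiv\tfrac{2}{\pi}$ and $A(\rho_{\xi,k})(\theta)=\tfrac{2}{\pi}\int_0^{\pi/2}ds=1=e^{0}$. So assume $\lambda\neq 0$. Since $\rho_{\xi,k}\in H(\C)$, the map $\theta\mapsto A(\rho_{\xi,k})(\theta)=\int_0^{\pi/2}\rho_{\xi,k}(\theta\sin^2 s)\,ds$ is entire (differentiation under the integral sign, with \eqref{eq:gamma3neq} providing local uniform control), and so is $\theta\mapsto e^{\lambda\theta}$; by the identity theorem it is therefore enough to prove $A(\rho_{\xi,k})(\theta)=e^{\lambda\theta}$ for real $\theta>0$, which is what lets us use the real integral formula \eqref{eq:rho2} for $\rho_2$.

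First I would reduce the claim to one integral identity: by linearity of $A$ and $A(1)=\tfrac{\pi}{2}$, the definition of $\rho_{\xi,k}$ gives
\[A(\rho_{\xi,k})(\theta)=\frac{4\lambda}{\pi}\int_0^{\pi/2}e^{\lambda\theta\sin^2 s}\,\rho_2(\theta\sin^2 s)\,ds+1,\]
so the whole statement reduces to
\[\int_0^{\pi/2}e^{\lambda\theta\sin^2 s}\,\rho_2(\theta\sin^2 s)\,ds=\frac{\pi}{4\lambda}\bigl(e^{\lambda\theta}-1\bigr),\qquad \theta>0.\]
To prove this I would first insert, via \eqref{eq:rho2} and the change of variable $u=\sqrt{\theta}\sin r$ (legitimate since $\theta\sin^2 s\ge 0$), the representation $\rho_2(\theta\sin^2 s)=\theta\sin s\int_0^s e^{-\lambda\theta\sin^2 r}\cos r\,dr$. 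Feeding this into the outer integral produces a double integral over the triangle $\{0\le r\le s\le\pi/2\}$; interchanging the order of integration (Fubini, the integrand being continuous) and evaluating the inner $s$–integral by $w=\cos s$ leaves
\[\theta\, e^{\lambda\theta}\int_0^{\pi/2}\cos r\;e^{-\lambda\theta\sin^2 r}\Bigl(\int_0^{\cos r}e^{-\lambda\theta w^2}\,dw\Bigr)dr.\]
Finally the substitution $p=\sin r$ turns this into $\theta e^{\lambda\theta}\iint_{Q}e^{-\lambda\theta(p^2+w^2)}\,dp\,dw$ over the quarter disc $Q=\{p,w\ge 0,\ p^2+w^2\le 1\}$; passing to polar coordinates gives $\theta e^{\lambda\theta}\cdot\tfrac{\pi}{2}\int_0^1 e^{-\lambda\theta t^2}t\,dt=\tfrac{\pi}{2}\,\theta e^{\lambda\theta}\cdot\tfrac{1-e^{-\lambda\theta}}{2\lambda\theta}=\tfrac{\pi}{4\lambda}(e^{\lambda\theta}-1)$, exactly as needed. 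Combining with the first reduction yields $A(\rho_{\xi,k})(\theta)=e^{\lambda\theta}$ for $\theta>0$, and hence for all $\theta\in\C$.

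I do not expect a genuine obstacle here: the argument is a chain of elementary substitutions plus one application of Fubini and one polar-coordinate evaluation, and the only points needing care are the justification that $A(\rho_{\xi,k})$ is entire (so that one may pass from $\theta>0$ to all of $\C$) and the absolute convergence of the double integral (so that Fubini applies), both routine given \eqref{eq:gamma3neq}. An alternative would be to match Taylor coefficients at $\theta=0$, using $A(\theta\mapsto\theta^n)(\theta)=\theta^n\int_0^{\pi/2}\sin^{2n}s\,ds$ and the double-factorial evaluation of that integral; but that route forces the binomial identity $\sum_{i=0}^m\binom{m}{i}\tfrac{(-1)^i}{2i+1}=\tfrac{(2m)!!}{(2m+1)!!}$, which the polar-coordinate computation sidesteps, so I would prefer the integral argument above.
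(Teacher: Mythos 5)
Your proof is correct, but it takes a genuinely different route from the paper's. The paper begins with $e^{\lambda\theta}-1=\int_0^\theta \lambda e^{\lambda y}\,dy$ and inserts the factor $1=\frac{1}{\pi}\int_0^{\theta-y}\frac{dx}{\sqrt{\theta-x-y}\sqrt{x}}$ (the Beta-type identity $\int_0^a ds/\sqrt{(a-s)s}=\pi$), producing a double integral over the triangle $\{x,y\ge 0,\ x+y\le\theta\}$; a shear $(x,y)\mapsto(x,y+x)$ followed by Fubini, together with the algebraic representation $A(f)(\theta)=\tfrac12\int_0^\theta f(s)/\sqrt{(\theta-s)s}\,ds$, then identifies the result as $A(\rho_{\xi,k})(\theta)-1$. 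You instead work with the trigonometric form $A(f)(\theta)=\int_0^{\pi/2}f(\theta\sin^2 s)\,ds$ together with \eqref{eq:rho2}, unwind the resulting double integral over $\{0\le r\le s\le\pi/2\}$ by Fubini, and reduce it to a Gaussian-type integral over the quarter disc, which you evaluate in polar coordinates. Both arguments hinge on Fubini plus the identity theorem to pass from $\theta>0$ to all of $\C$; the paper's route is shorter once the inserted Beta identity is spotted, whereas yours takes a few more substitutions but arrives at the key integral organically, as a rotationally symmetric quarter-disc Gaussian, rather than by planting an identity by hand.
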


\begin{proof}
As both functions are analytic, it is enough to check that $A(\rho_{\xi,k})(\theta)=e^{(\xi+ik)\theta}$ for every $\theta>0$.
Since $\int_0^a\tfrac{ds}{\sqrt{(a-s)s}}=\pi$ for all $a>0$, using \eqref{eq:rho2} and standard Fubini arguments, one can show that $A(\rho_{\xi,k})(\theta)=e^{(\xi+ik)\theta}$  for any $\theta>0$.
%
\end{proof}

\begin{lemma}
There are two positive constants $R$, $C$ such that for any $\xi\in\R\setminus\{0\}$, $k\in \Z$, and every $x$ at least $R/|\xi|$, we have
\begin{equation}\label{eq:gammapm00}
\Big|\rho_{\xi,k}(x)-\frac{1}{\pi(\xi+ik)x}-\frac{2}{\sqrt{\pi}}e^{(\xi+ik)x}\sqrt{(\xi+ik)x}\Big|\leq \frac{C}{|\xi+ik|^2x^2}.
\end{equation}
\end{lemma}
\begin{proof}
Summarizing the discussion in \cite[Section 4.2]{Olv} regarding the expansion of the incomplete Gamma function, there are two constants $R>0$ and $C>0$ such that for any complex $z$ with $|z|\geq R$, we have
\[\Gamma(\tfrac{1}{2},z)=\frac{e^{-z}}{\sqrt{z}}\Big(1-\frac{1}{2z}+\frac{\varepsilon_2(z)}{z^2}\Big)\quad\text{with}\quad |\varepsilon_2(z)|\leq C.\]
In view of \eqref{def:rho} and \eqref{rho2 Gamma}, for any $x>0$, we have
\begin{align*}
\rho_{\xi,k}(x)&=\frac{2}{\pi}\big(e^{(\xi+ik)x}\sqrt{(\xi+ik)x}(\sqrt{\pi}-\Gamma(\tfrac{1}{2},(\xi+ik)x))+1\big)\\
& =\frac{2}{\sqrt{\pi}}e^{(\xi+ik)x}\sqrt{(\xi+ik)x}+\frac{1}{\pi(\xi+ik)x}-\frac{2\varepsilon_2((\xi+ik)x)}{\pi(\xi+ik)^2x^2}.
\end{align*}
As $x\geq R/|\xi|$, for any $k\in \Z$, we have $|(\xi+ik)x|\geq|\xi|x\geq R$, which gives \eqref{eq:gammapm00}.
\end{proof}

For any $\xi\in\R$ and $k\in\Z$, define the continuous function $\varrho_{\xi,k}:[0,+\infty)\to\C$, analytic on $(0,+\infty)$ given by
\[\varrho_{\xi,k}(x)=2\int_0^{\sqrt{x}}e^{-(\xi+ik)s^2}ds\text{ for all }x\geq 0.\]
By definition,
\begin{equation}\label{eq:relrho}
\rho_{\xi,k}(x)=
\frac{2}{\pi}\big((\xi+ik)e^{(\xi+ik)x}\sqrt{x}\varrho_{\xi,k}(x)+1\big)\text{ for }x\geq 0.
\end{equation}
\begin{lemma}
For any non-zero $\xi+ik$ with $\xi\geq 0$ and $k\in\Z$, we have
\begin{gather}
\label{eq:gammapm1}
\Big|\varrho_{\xi,k}(x)-\frac{\sqrt{\pi}}{\sqrt{\xi+ik}}\Big|\leq \frac{ e^{-\xi x}}{|\xi+ik|\sqrt{x}}\text{ for all }x>0.
\end{gather}
\end{lemma}
\begin{proof}
Summarizing the discussion in \cite[Section 4.2.4]{Olv}, for any complex $z$ with $\rp z\geq 0$, we have
\[\Gamma(\tfrac{1}{2},z)=\frac{e^{-z}}{\sqrt{z}}\varepsilon_0(z)\quad\text{with}\quad |\varepsilon_0(z)|\leq 1.\]
By definition,
\[\varrho_{\xi,k}(x)=\frac{\sqrt{\pi}}{\sqrt{\xi+ik}}\erf(\sqrt{(\xi+ik)x})=\frac{\sqrt{\pi}}{\sqrt{\xi+ik}}
\Big(1-\frac{\Gamma(\tfrac{1}{2},(\xi+ik)x)}{\sqrt{\pi}}\Big).\]
It follows that
\[\Big|\varrho_{\xi,k}(x)-\frac{\sqrt{\pi}}{\sqrt{\xi+ik}}\Big|\leq \Big|\frac{1}{\sqrt{\xi+ik}}\frac{e^{-(\xi+ik)x}}{\sqrt{(\xi+ik)x}}\Big|
=\frac{ e^{-\xi x}}{|\xi+ik|\sqrt{x}}.\]
\end{proof}}

\begin{lemma}\label{lem:gammac}
Suppose that $(c_k)_{k\in\Z}$ is a sequence of complex numbers decaying exponentially, i.e., there exist $\delta,C>0$ such that
$|c_k|\leq Ce^{-\delta|k|}$ for all $k\in\Z$. Then, for every $0<\vep<\delta/2$, the series $\sum_{k\in\Z}c_k\rho_{\xi,k}$
converges in $H(S_{\delta/2-\vep})$.
\end{lemma}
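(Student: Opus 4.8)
The plan is to bound $\rho_{\xi,k}$ on the strip $S_{\delta/2-\vep}$ using the a priori estimate \eqref{eq:gamma3neq} and then invoke the Weierstrass $M$-test; the whole point is that the half-width $\delta/2$ of the target strip is precisely tuned to the decay rate $\delta$ of the coefficients $(c_k)$, which is exactly why the hypothesis $\vep<\delta/2$ is needed.

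First I would fix $n\geq 1$ and estimate, for $z=x+iy\in S_{\delta/2-\vep}$ with $|z|\leq n$, the quantity $|\rp((\xi+ik)z)|=|\xi x-ky|\leq |\xi|n+|k|(\delta/2-\vep)$. Substituting this, together with $|\xi+ik|\leq|\xi|+|k|$ and $|z|\leq n$, into \eqref{eq:gamma3neq} yields
\[|\rho_{\xi,k}(z)|\leq\frac{2}{\pi}\big(2(|\xi|+|k|)\,n\,e^{2|\xi|n}e^{(\delta-2\vep)|k|}+1\big).\]
Multiplying by $|c_k|\leq Ce^{-\delta|k|}$ and using $e^{-\delta|k|}e^{(\delta-2\vep)|k|}=e^{-2\vep|k|}$ gives
\[\sup_{z\in S_{\delta/2-\vep},\,|z|\leq n}|c_k\rho_{\xi,k}(z)|\leq\frac{2C}{\pi}\big(2(|\xi|+|k|)\,n\,e^{2|\xi|n}e^{-2\vep|k|}+e^{-\delta|k|}\big)=:M_k^{(n)}.\]
Since $\vep>0$, the series $\sum_{k\in\Z}(|\xi|+|k|)e^{-2\vep|k|}$ and $\sum_{k\in\Z}e^{-\delta|k|}$ both converge, so $\sum_{k\in\Z}M_k^{(n)}<\infty$ for every $n$.

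From here the conclusion is routine: by the $M$-test the partial sums $\sum_{|k|\leq N}c_k\rho_{\xi,k}$ are Cauchy with respect to each seminorm $\|\cdot\|_n^{S_{\delta/2-\vep}}$, hence converge in the Fr\'echet space $H(S_{\delta/2-\vep})$, and since each $\rho_{\xi,k}$ is entire and a locally uniform limit of holomorphic functions is holomorphic, the sum indeed belongs to $H(S_{\delta/2-\vep})$. I do not expect any genuine analytic obstacle here; the only delicate point is the bookkeeping of exponents, namely that the growth rate $(\delta-2\vep)|k|$ of $\rho_{\xi,k}$ on a strip of half-width $\delta/2-\vep$ is strictly dominated by the decay rate $\delta|k|$ of $(c_k)$, leaving the net summable factor $e^{-2\vep|k|}$.
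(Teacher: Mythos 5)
Your proposal is correct and follows essentially the same route as the paper: both bound $\|\rho_{\xi,k}\|_n^{S_{\delta/2-\vep}}$ via \eqref{eq:gamma3neq} using $|\rp((\xi+ik)z)|\leq|\xi|n+|k|(\delta/2-\vep)$, multiply by $|c_k|\leq Ce^{-\delta|k|}$ to get the net summable factor $e^{-2\vep|k|}$, and conclude convergence in the Fr\'echet space from summability of the seminorms.
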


\begin{proof}
In view of \eqref{eq:gamma3neq}, for every $k\in\Z$,
\begin{align*}
\|\rho_{\xi,k}\|_{n}^{S_{\delta/2-\vep}}&=\sup\{|\rho_{\xi,k}(z)|:|z|\leq n,|\ip z|\leq \delta/2-\vep\}\\
&\leq 2(|\xi|+|k|)ne^{2|\xi|n}e^{|k|(\delta-2\vep)}+1.
\end{align*}
It follows that
\[\|c_k\rho_{\xi,k}\|_{n}^{S_{\delta/2-\vep}}\leq  2(|\xi|+|k|)ne^{2|\xi|n}e^{-2\vep|k|}+Ce^{-\delta|k|}\text{ for all }k\in\Z,\]
and hence
\[\sum_{k\in\Z}\|c_k\rho_{\xi,k}\|_{n}^{S_{\delta/2-\vep}}<+\infty\text{ for every }n\geq 1.\]
This gives the required convergence in $H(S_{\delta/2-\vep})$.
\end{proof}

In the next part of this section, we analyze the form and some properties of pre-images under $A$ of analytic geometrically quasi-periodic functions.

Let $g\in C^{\omega}(\R)$ be an analytic geometrically $2\pi$-quasi-periodic map with the exponent $e^{2\pi\xi}$, i.e., $g(x+2\pi)=e^{2\pi\xi}g(x)$. Then, $\theta\mapsto e^{-\xi\theta}g(\theta)$ is
an analytic $2\pi$-periodic map with Fourier coefficients $(c_k)_{k\in\Z}$ vanishing exponentially. Let $\widetilde{\rho}_{g,\xi}$ be the sum
of the series $\sum_{k\in\Z}c_k\rho_{\xi,k}$. In view of Lemma~\ref{lem:Agamma}~and~\ref{lem:gammac},
\begin{equation}\label{eq:Ag}
\widetilde{\rho}_{g,\xi}\in C^{\omega}(\R)\quad\text{and}\quad A(\widetilde{\rho}_{g,\xi})=g.
\end{equation}

\begin{lemma}\label{lem:estpos}
Let $g\in C^{\omega}(\R)$ be  geometrically $2\pi$-quasi-periodic with the exponent $e^{2\pi\xi}\geq 1$
and let $(c_k)_{k\in\Z}$ be the Fourier coefficients of the $2\pi$-periodic map $\theta\mapsto e^{-\xi\theta}g(\theta)$.
If $\widetilde{\rho}_{g,\xi}(x)>0$ for $x\geq 0$, then
\begin{equation}\label{eq:rhoxik}
\sum_{k\in\Z}\sqrt{\xi+ik}c_ke^{ikx}\geq 0\text{ for }x\in\R.
\end{equation}
If, additionally, $\xi=0$, then $g$ is constant.
\end{lemma}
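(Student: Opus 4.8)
The plan is to determine the asymptotics of $\widetilde{\rho}_{g,\xi}$ at $+\infty$ and to read off \eqref{eq:rhoxik} from the positivity hypothesis. Set
\[
h(x):=\sum_{k\in\Z}\sqrt{\xi+ik}\,c_ke^{ikx}.
\]
Since $(c_k)$ vanishes exponentially while $\sqrt{|\xi+ik|}$ grows only polynomially, this series converges uniformly on a horizontal strip, so $h$ is analytic and $2\pi$-periodic; moreover $c_{-k}=\bar c_k$ (because $\theta\mapsto e^{-\xi\theta}g(\theta)$ is real), and together with $\overline{\sqrt{\xi+ik}}=\sqrt{\xi-ik}$ this gives $\overline{h(x)}=h(x)$, so $h$ is real-valued and \eqref{eq:rhoxik} is precisely the assertion $h\ge 0$ on $\R$. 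The guiding picture is that $\widetilde{\rho}_{g,\xi}(x)$ grows like $\tfrac{2}{\sqrt\pi}\sqrt{x}\,e^{\xi x}h(x)$ up to a bounded error, so positivity of $\widetilde{\rho}_{g,\xi}$ along an arithmetic progression pins down the sign of the periodic factor $h$; and when $\xi=0$ the $\sqrt{\cdot}$ weight kills the mean of $h$, upgrading "nonnegative" to "identically zero".

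To make this precise, I would start from $\widetilde{\rho}_{g,\xi}=\sum_kc_k\rho_{\xi,k}$ and use \eqref{eq:relrho}, i.e.\ $\rho_{\xi,k}(x)=\tfrac{2}{\pi}\bigl(1+(\xi+ik)e^{(\xi+ik)x}\sqrt{x}\,\varrho_{\xi,k}(x)\bigr)$ for $x\ge 0$. As all the relevant series converge absolutely, one may split the sum to obtain, for every $x>0$,
\[
\widetilde{\rho}_{g,\xi}(x)=\frac{2}{\pi}g(0)+\frac{2}{\pi}\sqrt{x}\,e^{\xi x}\sum_{k\in\Z}c_k(\xi+ik)e^{ikx}\varrho_{\xi,k}(x),
\]
using $g(0)=\sum_kc_k$ and noting that the only index with $\xi+ik=0$ (possible only when $\xi=0$, $k=0$) contributes nothing. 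Substituting $\varrho_{\xi,k}(x)=\sqrt{\pi}/\sqrt{\xi+ik}+R_k(x)$ with $|R_k(x)|\le 2\pi e^{-\xi x}/(|\xi+ik|\sqrt{x})$ from \eqref{eq:gammapm1}, the leading part collapses to $\sqrt{\pi}\,h(x)$ while the remainder is bounded by $2\pi e^{-\xi x}x^{-1/2}\sum_k|c_k|$, so that
\[
\Bigl|\widetilde{\rho}_{g,\xi}(x)-\tfrac{2}{\sqrt{\pi}}\sqrt{x}\,e^{\xi x}h(x)\Bigr|\le C_0:=\tfrac{2}{\pi}|g(0)|+4\sum_{k\in\Z}|c_k|\qquad(x>0).
\]

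Now suppose $h(x_0)<0$ for some $x_0$; replacing $x_0$ by $x_0+2\pi m$ with $m$ large, we may assume $x_0>0$. Along $x_n:=x_0+2\pi n$ periodicity gives $h(x_n)=h(x_0)$, while $\sqrt{x_n}\,e^{\xi x_n}\to+\infty$ because $\xi\ge 0$; hence $\widetilde{\rho}_{g,\xi}(x_n)\le\tfrac{2}{\sqrt\pi}\sqrt{x_n}e^{\xi x_n}h(x_0)+C_0\to-\infty$, contradicting $\widetilde{\rho}_{g,\xi}>0$. This proves \eqref{eq:rhoxik}. For the final claim take $\xi=0$: the constant Fourier coefficient of $h$ equals $\sqrt{0}\,c_0=0$, so $\int_0^{2\pi}h(x)\,dx=0$, and combined with $h\ge0$ this forces $h\equiv0$; thus $\sqrt{ik}\,c_k=0$ for every $k$, whence $c_k=0$ for $k\ne0$ and $g\equiv c_0$ is constant.

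I expect the only genuinely delicate step to be the uniform-in-$x$ estimate of the tail $\sum_kc_k(\xi+ik)e^{ikx}R_k(x)$: one must verify that the cancellation in $\varrho_{\xi,k}(x)-\sqrt\pi/\sqrt{\xi+ik}$ supplied by \eqref{eq:gammapm1} exactly absorbs the factors $\sqrt{x}\,e^{\xi x}$ produced by \eqref{eq:relrho}, so the error stays bounded uniformly in both $k$ and $x$; the exponential decay of $(c_k)$ then legitimizes every interchange of summation. Everything else is routine bookkeeping.
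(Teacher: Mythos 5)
Your proof is correct and follows essentially the same route as the paper's: the identity \eqref{eq:relrho} combined with the estimate \eqref{eq:gammapm1} yields exactly the uniform bound $|\widetilde{\rho}_{g,\xi}(x)-\tfrac{2}{\sqrt{\pi}}\sqrt{x}e^{\xi x}h(x)|\leq C_0$ (the paper's \eqref{eq:gammapm2} summed against $c_k$, with $h=\breve{g}/\sqrt{\pi}$), and the evaluation along $x_0+2\pi n$ plus the zero-mean argument for $\xi=0$ are identical. The "delicate" cancellation you flag is precisely what the paper verifies, and your handling of the degenerate index $\xi+ik=0$ matches the paper's as well.
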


\begin{proof}
Recall that for every $x\in\R$,
\begin{equation}\label{eq:ghat}
\widetilde{\rho}_{g,\xi}(x)=\sum_{k\in\Z}c_k\rho_{\xi,k}(x).
\end{equation}
Let us consider the $2\pi$-periodic analytic map $\breve{g}:\R\to\R$ given by
\[
\breve{g}(x)=\sqrt{\pi}\sum_{k\in\Z}\sqrt{\xi+ik}c_ke^{ikx}.
\]
By \eqref{eq:relrho}, for every $x>0$ and $k\in \Z$,
\begin{equation*}
\rho_{\xi,k}(x)=
\frac{2}{\pi}\big((\xi+ik)e^{(\xi+ik)x}\sqrt{x}\varrho_{\xi,k}(x)+1\big).
\end{equation*}
As $\xi\geq 0$, by \eqref{eq:gammapm1}, if $\xi+ik\neq 0$, then
\begin{gather}\label{eq:gammapm2}
\Big|\rho_{\xi,k}(x)-\frac{2}{\pi}\big(\sqrt{\pi}\sqrt{\xi+ik}\sqrt{x}e^{(\xi+ik)x}+1\big)\Big|\leq 4\text{ for every }x>0.
\end{gather}
If $\xi+ik= 0$, then \eqref{eq:gammapm2} is also satisfied because its left side is zero.
By \eqref{eq:ghat}, this gives
\[\Big|\widetilde{\rho}_{g,\xi}(x)-\frac{2}{\pi}\Big(\sqrt{x}e^{\xi x}\breve{g}(x)+\sum_{k\in\Z}c_k\Big)\Big|\leq 4\sum_{k\in\Z}|c_k|\text{ for every }x>0.\]
Since $\breve{g}$ is $2\pi$-periodic, there is $C>0$ such that for any $x\in[0,2\pi)$ and $n\in\N$,
\[\widetilde{\rho}_{g,\xi}(x+2\pi n)\leq \frac{2}{\pi}\sqrt{x+2\pi n}e^{\xi (x+2\pi n)}\breve{g}(x)+C.\]
If $\widetilde{\rho}_{g,\xi}(x)>0$ for $x>0$, then we obtain $\breve{g}(x)\geq 0$ for $x\in[0,2\pi)$, which gives \eqref{eq:rhoxik}.

Suppose that $\xi=0$. Then, $\breve{g}(x)=\sqrt{\pi}\sum_{k\in\Z}\sqrt{ik}c_ke^{ikx}$ is a non-negative map with zero integral on $[0,2\pi)$.
It follows that $\breve{g}\equiv 0$, and thus $c_k=0$ for $k\in\Z\setminus\{0\}$. Hence,
$g(x)=e^{-\xi x}g(x)=\sum_{k\in\Z}c_ke^{ikx}=c_0$,
so $g$ is constant.
\end{proof}

\begin{lemma}\label{lem:estneg}
Let $g\in C^{\omega}(\R)$ be  geometrically $2\pi$-quasi-periodic with the exponent $e^{-2\pi\xi}< 1$
and let $(c_k)_{k\in\Z}$ be the Fourier coefficients of the $2\pi$-periodic map $\theta\mapsto e^{\xi\theta}g(\theta)$.
If $\widetilde{\rho}_{g,-\xi}(x)>0$ for $x> 0$, then
\[\sum_{k\in\Z}\frac{c_k}{\xi-ik}\leq 0.\]
\end{lemma}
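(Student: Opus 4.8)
The plan is to extract the number $\sum_{k\in\Z}\tfrac{c_k}{\xi-ik}$ as the leading coefficient in the large-$x$ asymptotic expansion of $x\,\widetilde{\rho}_{g,-\xi}(x)$, and then to use the positivity hypothesis to fix its sign. Since the exponent $e^{-2\pi\xi}<1$ forces $\xi>0$, estimate \eqref{eq:gammapm00} is available; applying it with $k$ replaced by $-k$ gives, for every $k\in\Z$ and every $x\geq 4/\xi$,
\[\Big|\rho_{-\xi,k}(x)+\frac{1}{\pi(\xi-ik)x}\Big|\leq \frac{9}{\pi|\xi-ik|^2x^2}+10e^4\frac{|\xi-ik|}{\pi\sqrt{\xi}}\sqrt{x}\,e^{-\xi x}.\]
Multiplying by $\pi x$ rewrites this as $\pi x\,\rho_{-\xi,k}(x)=-\tfrac{1}{\xi-ik}+R_k(x)$, where $|R_k(x)|\leq \tfrac{9}{|\xi-ik|^2x}+10e^4\tfrac{|\xi-ik|}{\sqrt{\xi}}x^{3/2}e^{-\xi x}$ for $x\geq 4/\xi$.

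Next I would invoke the definition $\widetilde{\rho}_{g,-\xi}=\sum_{k\in\Z}c_k\rho_{-\xi,k}$, a series that converges pointwise on $\R$ by Lemma~\ref{lem:gammac}, together with the exponential decay of the Fourier coefficients $(c_k)_{k\in\Z}$. Fixing $x\geq 4/\xi$, multiplying the series by the scalar $\pi x$, and splitting each summand yields
\[\pi x\,\widetilde{\rho}_{g,-\xi}(x)=-\sum_{k\in\Z}\frac{c_k}{\xi-ik}+\sum_{k\in\Z}c_kR_k(x),\]
the rearrangement being legitimate because $|\xi-ik|\geq \xi$ while, for fixed $x$, $|R_k(x)|$ is dominated by an affine function of $|k|$, so both series on the right converge absolutely against the exponentially small $c_k$. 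Letting $x\to+\infty$ and bounding $|\xi-ik|\leq \xi+|k|$, we obtain
\[\Big|\sum_{k\in\Z}c_kR_k(x)\Big|\leq \frac{9}{x}\sum_{k\in\Z}\frac{|c_k|}{|\xi-ik|^2}+\frac{10e^4}{\sqrt{\xi}}x^{3/2}e^{-\xi x}\sum_{k\in\Z}|c_k|\,|\xi-ik|\;\longrightarrow\;0,\]
since both $k$-sums are finite; hence $\lim_{x\to+\infty}\pi x\,\widetilde{\rho}_{g,-\xi}(x)=-\sum_{k\in\Z}\tfrac{c_k}{\xi-ik}$ (in particular the sum is real, as it must be given $c_{-k}=\bar c_k$).

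Finally, since $\widetilde{\rho}_{g,-\xi}(x)>0$ for all $x>0$ by hypothesis, the quantity $\pi x\,\widetilde{\rho}_{g,-\xi}(x)$ is positive for all $x>0$, so its limit as $x\to+\infty$ is $\geq 0$; therefore $\sum_{k\in\Z}\tfrac{c_k}{\xi-ik}\leq 0$, which is the claim. The one point that needs care is the interchange of the limit $x\to+\infty$ with the infinite sum over $k$; this is exactly what the explicit, $k$-uniform bounds of \eqref{eq:gammapm00} provide once they are paired with the exponential decay of $c_k$, and everything else is routine bookkeeping.
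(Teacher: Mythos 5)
Your proposal is correct and follows essentially the same route as the paper: both apply the termwise estimate \eqref{eq:gammapm00} (with $k$ replaced by $-k$), sum against the exponentially decaying Fourier coefficients, and read off the sign of $\sum_{k\in\Z}c_k/(\xi-ik)$ from the positivity of $\widetilde{\rho}_{g,-\xi}$ via the leading $1/x$ term. Your version is slightly more explicit about justifying the interchange of the sum over $k$ with the limit $x\to+\infty$, which the paper leaves implicit.
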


\begin{proof}
Recall that for every $x\in\R$, we have
\begin{equation}\label{eq:ghat1}
\widetilde{\rho}_{g,-\xi}(x)=\sum_{k\in\Z}c_k\rho_{-\xi,k}(x).
\end{equation}
{As $\xi>0$, by \eqref{eq:gammapm00}, for every $k\in\Z$ and $x\geq R/\xi$, we have
\begin{gather*}
\Big|\rho_{-\xi,k}(x)+\frac{1}{\pi(\xi-ik)x}\Big|\leq \frac{C}{|\xi-ik|^2x^2}+2\sqrt{|\xi-ik|x}e^{-\xi x}.
\end{gather*}
By \eqref{eq:ghat1},  for every $x\geq R/\xi$, this gives
\[\Big|\widetilde{\rho}_{g,-\xi}(x)+\frac{1}{\pi}\sum_{k\in\Z}\frac{c_k}{\xi-ik}\frac{1}{x}\Big|\leq C\sum_{k\in\Z}\frac{|c_k|}{|\xi-ik|^2x^2}
+2\sum_{k\in\Z}|c_k|\sqrt{|\xi-ik|}\sqrt{x}e^{-\xi x}.\]
If $\widetilde{\rho}_{g,-\xi}(x)>0$ for $x>0$, then we get $\sum_{k\in\Z}\frac{c_k}{\xi-ik}\leq 0$.}
\end{proof}

\begin{proof}[Proof of Proposition~\ref{prop:mainqper}]
In view of Remark~\ref{rmk:rescal}, we can rescale $f_1$ so that  $A(f_1)$ is geometrically $2\pi$-quasi-periodic.
Suppose that $A(f_1)(\theta+2\pi)=e^{2\pi\xi}A(f_1)(\theta)$ for some $\xi\in\R$.
Let $(c_k)_{k\in\Z}$ be the Fourier coefficients of the analytic $2\pi$-periodic map $\theta\mapsto e^{-\xi\theta}A(f_1)(\theta)$.
Since the operator $A$ has a trivial kernel, by \eqref{eq:Ag}, we have $\widetilde{\rho}_{A(f_1),\xi}=f_1$.

\medskip
\noindent
\textbf{Zero Case: $\xi=0$.} Suppose that $\xi=0$. Then, directly by Lemma~\ref{lem:estpos} applied to $g:=A(f_1)$, the map $A(f_1)$ is constant. Since $A(1)=\frac{\pi}{2}$ and $A$ has trivial kernel, the map $f_1$ is also constant. As $A(f_2)(x)=A(f_1)(x_0-x)$ and $A(f_2)$ is analytic, $A(f_2)$ as well as $f_2$ are constant.

\medskip
\noindent
\textbf{Negative Case: $\xi<0$.} Suppose that $\xi<0$. Then, directly by Lemma~\ref{lem:estneg} applied to $g:=A(f_1)$ and $\xi:=-\xi>0$, we obtain that
\begin{equation}\label{eq:nonneg}
\sum_{k\in\Z}\frac{c_k}{\xi+ik}\geq 0.
\end{equation}
As $f_1(x)>0$ for $x>0$ and $(c_k)_{k\in\Z}$ vanishes exponentially, by the definition of $A$, we have
\[0< A(f_1)(x)=\sum_{k\in\Z}c_k e^{(\xi+ik)x}\text{ for }x>0,\]
and the series is uniformly convergent on any right half-line. It follows that
\[0<\int_0^{+\infty}A(f_1)(x)dx=-\sum_{k\in\Z}\frac{c_k}{\xi+ik},\]
which contradicts \eqref{eq:nonneg}.

\medskip
\noindent
\textbf{Positive Case: $\xi>0$.} Suppose that $\xi>0$. By assumption, $A(f_2)(x)>0$ for $x>0$ and
\[A(f_2)(x+2\pi)=A(f_1)(x_0-x-2\pi)=e^{-2\pi\xi}A(f_1)(x_0-x)=e^{-2\pi\xi}A(f_2)(x).\]
It follows that $f_2$ is an analytic map that is positive on the positive half-line and that $A(f_2)$ is geometrically $2\pi$-quasi-periodic with an exponent less than $1$.
We also get to the contradiction using the arguments from the Negative Case for the function $f_2$ instead of $f_1$.
\end{proof}

\begin{proof}[Proof of Theorem~\ref{thm:const}]
Let $c:(0,+\infty)\to\R$ be the analytic map given by
\[c(\theta)=\frac{1}{\sqrt{2}}\int_0^1\frac{W'_2(\sqrt{\theta}s)+W'_2(-\sqrt{\theta}s)}{\sqrt{1-s^2}}ds.\]
By assumption, we have $a(\theta)+\bar{a}(\theta)=\gamma c(E-\theta)$ for $\theta\in(0,E)$.
Moreover, $a+\bar{a}=\frac{1}{\sqrt{2}}A(u_1)$ and $c=\frac{1}{\sqrt{2}}A(u_2)$, where $u_1,u_2$ are analytic and positive maps on $[-\vep,+\infty)$ for some  $\vep>0$ so  that
\[u_1(x^2)=W'_1(x)+W'_1(-x)\text{ and }u_2(x^2)=W'_2(x)+W'_2(-x).\]
As $a+\bar{a}=\frac{1}{\sqrt{2}}A(u_1)$ is geometrically quasi-periodic, by {Proposition~\ref{prop:mainqper}}, we obtain that $u_1$ and $\gamma u_2$ are constant. Therefore, $W_1'+\bar{W}_1'$
and  $W_2'+\bar{W}_2'$ are constant as well.  In view of Remark~\ref{rem:SP}, this gives $V_1,V_2\in\mathcal{SP}$.
\end{proof}

\section{Proofs of the main results}\label{sec:appl}
\begin{proof}[Proof of Theorem~\ref{thm:main1}]
Fix $E>0$ and let any $I\in \mathcal{J}_E$. Then,  the polygons
\[P\cap(\left[-\bar{V}_1^{-1}(\theta),V_1^{-1}(\theta)\right]\times
\left[-\bar{V}_2^{-1}(E-\theta),V_2^{-1}(E-\theta)\right])\text{ for }\theta\in I\]
form a  smooth curve of polygons in $\mathcal{RP}$. Passing through
the change of coordinates $\eta$, we obtain a smooth curve $I\ni\theta\mapsto P_{E,\theta}\in\mathcal{RP}$ of billiard
tables.
In view of \eqref{eq:XY+-1} and \eqref{eq:XY+-2}, for every such curve and every $\theta\in I$,  we have
\begin{align*}
&X^+_{P_{E,\theta}}\setminus\{x^+_{P_{E,\theta}}\}\subset\{a_\xi(\theta):\xi\in X^+_I\},\
X^-_{P_{E,\theta}}\setminus\{x^-_{P_{E,\theta}}\}\subset\{\bar{a}_\xi(\theta):\xi\in X^-_I\},\\
&x^+_{P_{E,\theta}}\in \{a(\theta)\}\cup \{a_\xi(\theta):\xi\in X^+_I\},\ x^-_{P_{E,\theta}}\in \{\bar{a}(\theta)\}\cup \{\bar{a}_\xi(\theta):\xi\in X^-_I\},\\
&Y^+_{P_{E,\theta}}\setminus\{y^+_{P_{E,\theta}}\}\subset\{b_\xi(\theta):\xi\in Y^+_I\},\
Y^-_{P_{E,\theta}}\setminus\{y^-_{P_{E,\theta}}\}\subset\{\bar{b}_\xi(\theta):\xi\in Y^-_I\},\\
&y^+_{P_{E,\theta}}\in \{b(\theta)\}\cup \{b_\xi(\theta):\xi\in Y^+_I\},\ y^-_{P_{E,\theta}}\in \{\bar{b}(\theta)\}\cup \{\bar{b}_\xi(\theta):\xi\in Y^-_I\}.
\end{align*}
Therefore, by Theorem~\ref{thm:main-min}, to prove that the billiard table $P_{E,\theta}$ is non-resonant for all but countably many $\theta\in I$,
it suffices to show that
for every choice of integers $\gamma_{a}$,  $\gamma_{\bar{a}}$,  $\gamma_{a_{\xi}}$ for $\xi\in X^+_I\setminus\{0\}$,
$\gamma_{\bar{a}_{\xi}}$ for $\xi\in X^-_I$, $\gamma_{b}$,  $\gamma_{\bar{b}}$,  $\gamma_{b_{\xi}}$ for $\xi\in Y^+_I\setminus\{0\}$,
$\gamma_{\bar{b}_{\xi}}$ for $\xi\in Y^-_I$ such that at least one number is non-zero and $\gamma_{a}\cdot \gamma_{\bar{a}}\geq 0$, $\gamma_{b}\cdot \gamma_{\bar{b}}\geq 0$,
we have
\begin{align}\label{eq:gms}
\begin{split}
\gamma_{a}&a(\theta)+ \gamma_{\bar{a}}\bar{a}(\theta)+\sum_{\xi\in X^+_I\setminus\{0\}}\gamma_{a_{\xi}}a_{\xi}(\theta)+
\sum_{\xi\in X^-_I}\gamma_{\bar{a}_{\xi}}\bar{a}_{\xi}(\theta)\\
&+\gamma_{b}b(\theta)+ \gamma_{\bar{b}}\bar{b}(\theta)
+\sum_{\xi\in Y^+_I\setminus\{0\}}\gamma_{b_{\xi}}b_{\xi}(\theta)+
\sum_{\xi\in Y^-_I}\gamma_{\bar{b}_{\xi}}\bar{b}_{\xi}(\theta)\neq 0
\end{split}
\end{align}
for all but countably many $\theta\in I$.  If $V_1$ is even, then $\bar{a}=a$ and $\bar{a}_\xi=a_\xi$, so we additionally assume that
$\gamma_{\bar{a}}=\gamma_{\bar{a}_{\xi}}=0$ for all $\xi\in X^+_I\cap X^-_I$.
Similarly, if $V_2$ is even, then  we additionally assume that
$\gamma_{\bar{b}}=\gamma_{\bar{b}_{\xi}}=0$ for all $\xi\in Y^+_I\cap Y^-_I$.

\medskip

\noindent
\textbf{Case 1.} Suppose that at least one integer number $\gamma_{a_{\xi}}$ for $\xi\in X^+_I\setminus\{0\}$,
$\gamma_{\bar{a}_{\xi}}$ for $\xi\in X^-_I$,  $\gamma_{b_{\xi}}$ for $\xi\in Y^+_I\setminus\{0\}$, or
$\gamma_{\bar{b}_{\xi}}$ for $\xi\in Y^-_I$ is non-zero. Then \eqref{eq:gms} follows directly from the first part of Proposition~\ref{prop:indep0}.

\medskip

\noindent
\textbf{Case 2.} Suppose that all the above $\gamma$'s are zero. Then we have to assume that at least one integer number $\gamma_{a}$,  $\gamma_{\bar{a}}$,
$\gamma_{b}$,  $\gamma_{\bar{b}}$ is non-zero and then  show that
\begin{equation}\label{eq:gms1}
\gamma_{a}a(\theta)+ \gamma_{\bar{a}}\bar{a}(\theta)+\gamma_{b}b(\theta)+ \gamma_{\bar{b}}\bar{b}(\theta)\neq 0\text{ for all but countably many }\theta\in I.
\end{equation}

If  $m_1>2$ or $m_2>2$, then \eqref{eq:gms1} follows directly from the second part of  Proposition~\ref{prop:indep0}. This completes the proof of Theorem~\ref{thm:main1} under assumption $(a)$.

\medskip

Now suppose that $m_1=m_2=2$ and $V_1(x)=\omega V_2(\tau x)$ for some $\omega>0$ and $\tau\neq 0$ such that
\[|\tau|\sqrt{\omega}=\sqrt{\frac{V''_1(0)}{V''_2(0)}}\quad \text{ is irrational.}\]
Suppose, contrary to our claim, that \eqref{eq:gms1} does not meet. Then, by part $(a)$ of Proposition~\ref{prop:twobad}, we have
$\gamma_a+\gamma_{\bar{a}}\neq 0$, $\gamma_b+\gamma_{\bar{b}}\neq 0$ and
\[\sqrt{\frac{V''_1(0)}{V''_2(0)}}=|\tau|\sqrt{\omega}=-\frac{\gamma_{a}+\gamma_{\bar{a}}}{\gamma_{b}+\gamma_{\bar{b}}}\in\Q.\]
This gives a contradiction which completes the proof  of Theorem~\ref{thm:main1} under assumption $(b)$.
\end{proof}

%
%


\begin{proposition}
The set of resonant energy levels $\mathcal{E}(P,V_1,V_2)$ is bounded, more precisely
\[\mathcal{E}(P,V_1,V_2)\subset\big(0,\max\{V_1(x^+_P),\bar{V}_1(x^-_P)\}+\max\{V_2(y^+_P),\bar{V}_2(y^-_P)\}\big).\]
\end{proposition}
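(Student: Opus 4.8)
The goal is to show $\mathcal E(P,V_1,V_2)\subset(0,A+B)$, where $A:=\max\{V_1(x^+_P),\bar V_1(x^-_P)\}$ and $B:=\max\{V_2(y^+_P),\bar V_2(y^-_P)\}$; equivalently, that every $E\geq A+B$ lies outside $\mathcal E(P,V_1,V_2)$. Fix such an $E$. Since $(E,\theta)$ is resonant precisely when $P_{E,\theta}$ is resonant, it suffices to prove that $\{\theta\in(0,E):P_{E,\theta}\text{ is resonant}\}$ is countable, and as $\mathcal J_E$ is finite this reduces to showing, for each $I\in\mathcal J_E$, that $\{\theta\in I:P_{E,\theta}\text{ is resonant}\}$ is countable. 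By \eqref{eq:XY+-1}--\eqref{eq:XY+-2}, for $\theta\in I$ every parameter of $P_{E,\theta}$ lies in the fixed finite family $\Delta=\Delta_0\cup\{a,\bar a,b,\bar b\}$ attached, as in Proposition~\ref{prop:indep0}, to the sequences $X^\pm_I$, $Y^\pm_I$ (constant on $I$), and by Remark~\ref{rmk:marg} each of $a,\bar a,b,\bar b$ can occur only as an extreme side of $P_{E,\theta}$.

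The new ingredient is a \emph{decoupling}: because $E\geq A+B$, on each $I\in\mathcal J_E$ at most one of the pairs $\{a,\bar a\}$ and $\{b,\bar b\}$ actually occurs among the parameters of the polygons $P_{E,\theta}$, $\theta\in I$. Indeed, by \eqref{eq:XY+-2} the alternatives in \eqref{eq:XY+-3} hold simultaneously for all $\theta\in I$, so if $a$ occurs (i.e.\ $x^+_{P_{E,\theta}}=a(\theta)$ on $I$) then $V_1(x^+_P)>\theta$ for every $\theta\in I$, hence $\theta<A$; likewise $\bar a$ occurring forces $\theta<A$, whereas $b$ or $\bar b$ occurring forces $E-\theta<B$, i.e.\ $\theta>E-B$. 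If both pairs occurred on the same $I$ we would get $E-B<\theta<A$ for $\theta\in I$, hence $E<A+B$, a contradiction.

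Now suppose, towards a contradiction, that $\{\theta\in I:P_{E,\theta}\text{ resonant}\}$ is uncountable. For all but countably many $\theta$ the non-degeneracy hypotheses of Theorem~\ref{thm:main-min} hold for $P_{E,\theta}$ — the failure of any one of them is an identity between two functions of $\Delta$, which is excluded on an uncountable set by \eqref{neq:ga1} — so, discarding these, Theorem~\ref{thm:main-min} (contrapositive) produces for each remaining $\theta$ integers $\gamma_\delta$ ($\delta\in\Delta$), not all zero, with $\gamma_a\gamma_{\bar a}\geq0$, $\gamma_b\gamma_{\bar b}\geq0$ and $\sum_{\delta\in\Delta}\gamma_\delta\delta(\theta)=0$; here the sign conditions of Theorem~\ref{thm:main-min} on the coefficients of the extreme sides translate into these, because whenever the extreme vertical (resp.\ horizontal) side of $P_{E,\theta}$ is not of marginal type, the corresponding coefficient $\gamma_a$ or $\gamma_{\bar a}$ (resp.\ $\gamma_b$ or $\gamma_{\bar b}$) is forced to vanish. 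Since there are only countably many such integer vectors but uncountably many admissible $\theta$, one vector $(\gamma_\delta)$ solves $\sum_{\delta\in\Delta}\gamma_\delta\delta(\theta)=0$ for uncountably many $\theta\in I$; by analyticity it holds on all of $I$, hence on the interval $(v_1,E-v_2)\supset I$ on which every $\delta\in\Delta$ is analytic, where $v_1,v_2$ are the quantities attached to $X^\pm_I$, $Y^\pm_I$ as in Proposition~\ref{prop:indep0} (one checks $v_1\leq\inf I$, $v_2\leq E-\sup I$, so $v_1+v_2<E$, and with the evident modifications when $V_1$ or $V_2$ is even).

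To conclude, use the decoupling: say $\{b,\bar b\}$ does not occur on $I$, so $\gamma_b=\gamma_{\bar b}=0$ (the other case is symmetric). If $\gamma_\delta\neq0$ for some $\delta\in\Delta_0$, then \eqref{neq:ga1} in Proposition~\ref{prop:indep0} gives $\sum_{\delta\in\Delta}\gamma_\delta\delta(\theta)\neq0$ for all but countably many $\theta\in(v_1,E-v_2)$, contradicting the identity just obtained. Otherwise $\gamma_\delta=0$ for all $\delta\in\Delta_0$, so only $\gamma_a,\gamma_{\bar a}$ can be nonzero, with $\gamma_a\gamma_{\bar a}\geq0$ and $(\gamma_a,\gamma_{\bar a})\neq(0,0)$; then $\gamma_a a+\gamma_{\bar a}\bar a\equiv0$ on $I$, hence on $(0,+\infty)$ by analytic continuation, which is impossible because $a,\bar a$ are positive there and $\gamma_a,\gamma_{\bar a}$ share a common sign. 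This contradiction shows $\{\theta\in I:P_{E,\theta}\text{ resonant}\}$ is countable, so $E\notin\mathcal E(P,V_1,V_2)$, as required. The genuinely new point is the elementary decoupling of the second paragraph; the main technical chore is the bookkeeping in the third — reconciling the sign conventions of Theorem~\ref{thm:main-min} with those of Proposition~\ref{prop:indep0} and checking that the exceptional values of $\theta$ (where Theorem~\ref{thm:main-min} does not directly apply) form only a countable set.
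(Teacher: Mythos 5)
Your proof is correct and follows essentially the same route as the paper's: the hypothesis that $E$ is at least the sum of the two maxima decouples the marginal parameters, so that on each $I\in\mathcal{J}_E$ at most one of the pairs $\{a,\bar a\}$, $\{b,\bar b\}$ can occur among the side parameters of $P_{E,\theta}$, after which Theorem~\ref{thm:main-min} combined with the first part of Proposition~\ref{prop:indep0} (and the positivity and common sign of the surviving marginal pair) rules out resonance for all but countably many $\theta\in I$. The extra bookkeeping you supply --- countably many integer vectors versus uncountably many $\theta$, plus analytic continuation --- is exactly what the paper leaves implicit in its phrase ``for all but countably many $\theta$''.
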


\begin{proof}
Suppose that
\[E\geq \max\{V_1(x^+_P),\bar{V}_1(x^-_P)\}+\max\{V_2(y^+_P),\bar{V}_2(y^-_P)\}.\]
We need to show that for every $I\in \mathcal{J}_E$, the billiard table $P_{E,\theta}$
is non-resonant for all but countably many $\theta\in I$. The proof of this fact uses the same arguments that were applied in the proof of Theorem~\ref{thm:main1} but in a slightly more subtle way.

For every $I\in \mathcal{J}_E$, we have
\[\max\{V_1(x^+_P),\bar{V}_1(x^-_P)\}\leq \theta\ \text{ or }\ \max\{V_2(y^+_P),\bar{V}_2(y^-_P)\}\leq E-\theta\ \text{ for all }\ \theta\in I.\]
Suppose that $\bar{V}_1(x^-_P)\leq \theta$ and $V_1(x^+_P)\leq \theta$ for all $\theta\in I$.
The proof of the second case proceeds similarly.

Due to our additional assumption, the parameters of the vertical sides cannot be of the form  $a(\theta)$ or $\bar{a}(\theta)$. This allows us to prove the absence of resonance without additional assumptions on the potentials. More precisely, in view of \eqref{eq:XY+-1}, \eqref{eq:XY+-2}, and \eqref{eq:XY+-3}, for every  $\theta\in I$,  we have
\begin{align*}
&X^+_{P_{E,\theta}}\subset\{a_\xi(\theta):\xi\in X^+_I\},\
X^-_{P_{E,\theta}}\subset\{\bar{a}_\xi(\theta):\xi\in X^-_I\},\\
&Y^+_{P_{E,\theta}}\setminus\{y^+_{P_{E,\theta}}\}\subset\{b_\xi(\theta):\xi\in Y^+_I\},\
Y^-_{P_{E,\theta}}\setminus\{y^-_{P_{E,\theta}}\}\subset\{\bar{b}_\xi(\theta):\xi\in Y^-_I\},\\
&y^+_{P_{E,\theta}}\in \{b(\theta)\}\cup \{b_\xi(\theta):\xi\in Y^+_I\},\ y^-_{P_{E,\theta}}\in \{\bar{b}(\theta)\}\cup \{\bar{b}_\xi(\theta):\xi\in Y^-_I\}.
\end{align*}
Therefore, by Theorem~\ref{thm:main-min}, to prove that  $P_{E,\theta}$ is non-resonant for all but countably many $\theta\in I$,
it suffices to show that for every choice of integers   $\gamma_{a_{\xi}}$ for $\xi\in X^+_I\setminus\{0\}$,
$\gamma_{\bar{a}_{\xi}}$ for $\xi\in X^-_I$, $\gamma_{b}$,  $\gamma_{\bar{b}}$,  $\gamma_{b_{\xi}}$ for $\xi\in Y^+_I\setminus\{0\}$,
$\gamma_{\bar{b}_{\xi}}$ for $\xi\in Y^-_I$ such that at least one number is non-zero and  $\gamma_{b}\cdot \gamma_{\bar{b}}\geq 0$,
we have
\begin{align}\label{eq:gms11}
\begin{split}
\sum_{\xi\in X^+_I\setminus\{0\}}\gamma_{a_{\xi}}a_{\xi}(\theta)+
\sum_{\xi\in X^-_I}\gamma_{\bar{a}_{\xi}}\bar{a}_{\xi}(\theta)
&+\sum_{\xi\in Y^+_I\setminus\{0\}}\gamma_{b_{\xi}}b_{\xi}(\theta)+
\sum_{\xi\in Y^-_I}\gamma_{\bar{b}_{\xi}}\bar{b}_{\xi}(\theta)\\
&+\gamma_{b}b(\theta)+ \gamma_{\bar{b}}\bar{b}(\theta)
\neq 0
\end{split}
\end{align}
for all but countably many $\theta\in I$.  If $V_1$ is even, then  we additionally assume that
$\gamma_{\bar{a}_{\xi}}=0$ for all $\xi\in X^+_I\cap X^-_I$.
Similarly, if $V_2$ is even, then  we additionally assume that
$\gamma_{\bar{b}}=\gamma_{\bar{b}_{\xi}}=0$ for all $\xi\in Y^+_I\cap Y^-_I$.

If at least one integer number $\gamma_{a_{\xi}}$ for $\xi\in X^+_I\setminus\{0\}$,
$\gamma_{\bar{a}_{\xi}}$ for $\xi\in X^-_I$,  $\gamma_{b_{\xi}}$ for $\xi\in Y^+_I\setminus\{0\}$, or
$\gamma_{\bar{b}_{\xi}}$ for $\xi\in Y^-_I$ is non-zero, then \eqref{eq:gms11} follows directly from the first part of Proposition~\ref{prop:indep0}.

If all the above $\gamma$'s are zero, then \eqref{eq:gms11} reduces to $\gamma_{b}b(\theta)+ \gamma_{\bar{b}}\bar{b}(\theta)
\neq 0$. Since $\gamma_{b}$ and $\gamma_{\bar{b}}$ have the same sign and at least one is non-zero, we have $\gamma_{b}b(\theta)+ \gamma_{\bar{b}}\bar{b}(\theta)\neq 0$ for every $\theta\in I$. This completes the proof.
\end{proof}


\begin{proof}[Proof of Theorem~\ref{thm:main2}]
In view of Theorem~\ref{thm:main1}, we need to show that if $m_1=m_2=2$ and $\mathcal{E}(P,V_1,V_2)$ has at least two elements, then we have $V_1,V_2\in\mathcal{SP}$ with $\sqrt{\frac{V''_1(0)}{V''_2(0)}}$ rational, and the set $\mathcal{E}(P,V_1,V_2)$ is open.

By Corollary~\ref{cor:EE}, we have $\mathcal{E}(P,V_1,V_2)\subset \mathfrak{E}(V_1,V_2)$.
Suppose that $\mathcal{E}(P,V_1,V_2)$ has at least two elements. In view of Proposition~\ref{prop:twobad}, it follows that $V_1,V_2\in\mathcal{SP}$.
Moreover, by \eqref{eq:ga4} and \eqref{eq:spab}, we get that $\sqrt{\frac{V''_1(0)}{V''_2(0)}}$ is rational.

The most challenging part of the proof remains the openness of the set $\mathcal{E}(P,V_1,V_2)$.
For any $E>0$ and $\theta\in(0,E)$, we denote by  $(M_{E,\theta},\omega_{E,\theta})$ the translation surface associated with the polygon $P_{E,\theta}$ and  its natural partition into four $\mathcal{RP}$-polygons by $\mathcal{P}_{E,\theta}$.

Suppose $E_0>0$ belongs to $\mathcal{E}(P,V_1,V_2)$. We focus only on the case when both potentials $V_1$ and $V_2$ are not even. In other cases, the proof runs similarly, and some formulas are even more straightforward.

Choose an interval $I\in\mathcal{J}_{E_0}$ such that $P_{{E_0},\theta}$ is resonant for all $\theta\in I_0$ so that $I_0\subset I$ is an uncountable subset. By definition, for every $\theta\in I_0$, the translation surface $(M_{{E_0},\theta},\omega_{{E_0},\theta})$ has a saddle connection $\gamma(\theta)$ in direction $\pi/4$ with length $\tau(\theta)>0$. Since $I\in\mathcal{J}_{E_0}$, the combinatorial data of all partitions $\mathcal{P}_{{E_0},\theta}$ for $\theta\in I$ are the same. Hence, we can denote  the set of positively (negatively) oriented vertical sides by $D^{\pm}_v$,  the set of positively (negatively) oriented horizontal sides by $D^{\pm}_h$, the set of extreme sides by $D_{ext}$, and the set of vertices of $\mathcal P_{{E_0},\theta}$ by $V$, independently of $\theta\in I$. We denote by $D^\pm_{marg,v}, D^\pm_{marg,h}\subset D_{ext}$ the sets of extreme sides which come from marginal sides of the polygons $P_{{E_0},\theta}$, in the sense of Remark~\ref{rmk:marg}, such that
\begin{align*}
e\in D^+_{marg,v}&\Longleftrightarrow e\in D_v\text{ is related to the right end of the polygon }P_{{E_0},\theta},\\
e\in D^-_{marg,v}&\Longleftrightarrow e\in D_v\text{ is related to the left end of the polygon }P_{{E_0},\theta},\\
e\in D^+_{marg,h}&\Longleftrightarrow e\in D_h\text{ is related to the upper end of the polygon }P_{{E_0},\theta},\\
e\in D^-_{marg,h}&\Longleftrightarrow e\in D_h\text{ is related to the lower end of the polygon }P_{{E_0},\theta}.
\end{align*}
In view of \eqref{eq:sumsum}, for any $\theta\in I_0$,
\begin{align*}
\begin{aligned}
\tau(\theta) e^{i\frac{\pi}{4}}&=\sum_{e\in D_v^+}2n_e(\theta)x_\theta(e)-\sum_{e\in D_v^-}2n_e(\theta)x_\theta(e)\\
&\quad+\vep_1^b(v_+(\theta))x(e_v(v_+(\theta)))+\vep_1^e(v_-(\theta))x(e_v(v_-(\theta)))\\
&\quad+i\Big(\sum_{e\in D_h^+}2n_e(\theta)y_\theta(e)-\sum_{e\in D_h^-}2n_e(\theta)y_\theta(e)\\
&\quad\qquad+\vep_2^b(v_+(\theta))y_\theta(e_h(v_+(\theta)))+\vep_2^e(v_-(\theta))y_\theta(e_h(v_-(\theta)))\Big),
\end{aligned}
\end{align*}
where  $v_+(\theta)\in V$ represents the beginning and $v_-(\theta)\in V$  the end of $\gamma(\theta)$, and $n_e(\theta)$ is the meeting number of $e\in D$ with $\gamma(\theta)$ in $(M_{{E_0},\theta},\omega_{{E_0},\theta})$.
Since $I_0$ is uncountable, one can find another uncountable subset $I_1\subset I_0$ such that $v_+$, $v_-$ and $n_e$ are constant on $I_1$ for every $e\in D$. In view of \eqref{eq:XY+-1} and  \eqref{eq:XY+-2},
for every $e\in D^{\pm}_v$, we have
\begin{align*}
&x_\theta(e)=a_{\xi_e}(\theta)\text{ for some }\xi_e\in X^+_I\text{ or }x_\theta(e)=a(\theta)\text{ (if $e\in D^+_{marg,v}$) or }\\
&x_\theta(e)=\bar{a}_{\xi_e}(\theta)\text{ for some }\xi_e\in X^-_I\text{ or }x_\theta(e)=\bar{a}(\theta)\text{ (if $e\in D^-_{marg,v}$),}
\end{align*}
and
for every $e\in D^{\pm}_h$, we have
\begin{align*}
&y_\theta(e)=b_{\xi_e}(\theta)\text{ for some }\xi_e\in Y^+_I\text{ or }y_\theta(e)=b(\theta)\text{ (if $e\in D^+_{marg,h}$) or }\\
&y_\theta(e)=\bar{b}_{\xi_e}(\theta)\text{ for some }\xi_e\in Y^-_I\text{ or }y_\theta(e)=\bar{b}(\theta)\text{ (if $e\in D^-_{marg,h}$).}
\end{align*}
It follows that for $\theta\in I_1$,
\begin{align*}\label{eq:tautau1}
\begin{aligned}
\tau(\theta) e^{i\pi/4}&=\sum_{\xi\in X^+_I\setminus\{0\}}p_\xi a_\xi(\theta)+\sum_{\xi\in X^-_I}p_\xi\bar{a}_\xi(\theta)+p_{a} a(\theta)+ p_{\bar a} \bar{a}(\theta)\\
&\quad+i\Big(\sum_{\xi\in Y^+_I\setminus\{0\}}q_\xi b_\xi(\theta)+\sum_{\xi\in Y^-_I} q_\xi\bar{b}_\xi(\theta)+q_{b}  b(\theta)+ q_{\bar b}  \bar{b}(\theta)\Big),
\end{aligned}
\end{align*}
where
\begin{align*}
p_\xi&=\sum_{e\in D_v^+, \xi_e=\xi}2n_e-\sum_{e\in D_v^-, \xi_e=\xi}2n_e+\vep_1^b(v_+)\delta_{\xi_{e_v(v_+)},\xi}+\vep_1^e(v_-)\delta_{\xi_{e_v(v_-)},\xi},\\
q_\xi&=\sum_{e\in D_h^+, \xi_e=\xi}2n_e-\sum_{e\in D_h^-, \xi_e=\xi}2n_e+\vep_2^b(v_+)\delta_{\xi_{e_h(v_+)},\xi}+\vep_2^e(v_-)\delta_{\xi_{e_h(v_-)},\xi},\\
p_a&=\sum_{e\in D_{marg,v}^+}2n_e+\chi_{D_{marg,v}^+}(e_v(v_+))+\chi_{D_{marg,v}^+}(e_v(v_-))\geq 0,\\
p_{\bar a}&=\sum_{e\in D_{marg,v}^-}2n_e+\chi_{D_{marg,v}^-}(e_v(v_+))+\chi_{D_{marg,v}^-}(e_v(v_-))\geq 0,\\
q_b&=\sum_{e\in D_{marg,h}^+}2n_e+\chi_{D_{marg,h}^+}(e_h(v_+))+\chi_{D_{marg,h}^+}(e_h(v_-))\geq 0,\\
q_{\bar b}&=\sum_{e\in D_{marg,h}^-}2n_e+\chi_{D_{marg,h}^-}(e_h(v_+))+\chi_{D_{marg,h}^-}(e_h(v_-))\geq 0.
\end{align*}
Note that the absence of negative coefficients in the above four sums is because each marginal side is extreme, in which case we can use \eqref{eq:ext}, \eqref{eq:ext1}, \eqref{eq:ext2}, \eqref{eq:ext3}, and \eqref{eq:ext4}.
Hence, for $\theta\in I_1$,
\begin{align*}
\sum_{\xi\in X^+_I\setminus\{0\}}p_\xi a_\xi(\theta)&+\sum_{\xi\in X^-_I}p_\xi\bar{a}_\xi(\theta)+p_{a} a(\theta)+ p_{\bar a} \bar{a}(\theta)\\
&=\sum_{\xi\in Y^+_I\setminus\{0\}}q_\xi b_\xi(\theta)+\sum_{\xi\in Y^-_I} q_\xi\bar{b}_\xi(\theta)+q_{b}  b(\theta)+ q_{\bar b}  \bar{b}(\theta).
\end{align*}
As $I_1$ is uncountable, in view of Proposition~\ref{prop:indep0},
\begin{equation*} \label{eq:pqzero}
p_\xi=0\text{ for }\xi\in (X^+_I\setminus\{0\})\cup X^-_I\text{ and }q_\xi=0\text{ for }\xi\in (Y^+_I\setminus\{0\})\cup Y^-_I.
\end{equation*}
As $V_1$, $V_2$ are not even, by the proof of Proposition~\ref{prop:twobad}, we have $p_{a}=p_{\bar{a}}\neq 0$, $q_{b}=q_{\bar{b}}\neq 0$, and
\begin{equation}\label{eq:ga44}
p_a (a(\theta)+\bar{a}(\theta))=q_b(b_{E_0}(\theta)+\bar{b}_{E_0}(\theta))\text{ for all  }\theta \in (0,{E_0}).
\end{equation}
By the first part of the proof, $V_1,V_2\in\mathcal{SP}$. In view of Remark~\ref{rmk:SP}, this gives
\begin{equation}\label{eq:a+b}
a(\theta)+\bar{a}(\theta)=\frac{\pi}{\sqrt{V''_1(0)}},\quad
b_E(\theta)+\bar{b}_E(\theta)=\frac{\pi}{\sqrt{V''_2(0)}}\text{ for all  }E>0,\ \theta \in (0,E).
\end{equation}
Fix $\theta_0\in I_1$. Then, there exists $\vep>0$ such that for any pair $(E,\theta)$ with $|E-E_0|<\vep$ and $|\theta-\theta_0|<\vep$,
\begin{itemize}
\item the surfaces $(M_{E,\theta}, \omega_{E,\theta})$ and $(M_{E_0,\theta_0}, \omega_{E_0,\theta_0})$ have the same combinatorial data;
\item $(M_{E,\theta}, \omega_{E,\theta})$ has a saddle connection $\gamma(E,\theta)$ in a direction $\vartheta(E,\theta)$ (very close to $\pi/4$) which begins and ends at the same vertices as $\gamma(\theta_0)$ in $(M_{E_0,\theta_0}, \omega_{E_0,\theta_0})$;
\item $\gamma(E,\theta)$ and $\gamma(\theta_0)$ cross the same sides with the same multiplicities.
\end{itemize}
This follows from the fact that the parameters of the surface $(M_{E,\theta}, \omega_{E,\theta})$ change continuously around each pair $(E_0, \theta_0)$, provided that $\theta_0\in I\in \mathcal J_{E_0}$.

We will show that $\vartheta(E,\theta)=\pi/4$ for all $(E,\theta)$ with $|E-E_0|<\vep$ and $|\theta-\theta_0|<\vep$. This shows that $\mathcal{E}(P,V_1,V_2)$ is open.

As the saddle connections $\gamma(E,\theta)$ and $\gamma(\theta_0)$ have the same combinatorial data (i.e., they begin and end at the same vertices and pass through the same sides the same number of times), using \eqref{eq:sumsum} and the fact that the integer factors $p_\xi$, $p_a$, $p_{\bar a}$, $q_\xi$, $q_b$, $q_{\bar b}$ depend only on these combinatorial data, we get
\begin{align*}\label{eq:tautau2}
\begin{aligned}
\tau(E,\theta) e^{i\vartheta(E,\theta)}&=\sum_{\xi\in X^+_I\setminus\{0\}}p_\xi a_\xi(\theta)+\sum_{\xi\in X^-_I}p_\xi\bar{a}_\xi(\theta)+p_{a} a(\theta)+ p_{\bar a} \bar{a}(\theta)\\
&\quad+i\Big(\sum_{\xi\in Y^+_I\setminus\{0\}}q_\xi b_{E,\xi}(\theta)+\sum_{\xi\in Y^-_I} q_\xi\bar{b}_{E,\xi}(\theta)+q_{b}  b_E(\theta)+ q_{\bar b}  \bar{b}_E(\theta)\Big),
\end{aligned}
\end{align*}
where $\tau(E,\theta)>0$ is the length of $\gamma(E,\theta)$. Since $p_\xi=0$, $q_\xi=0$, $p_a=p_{\bar a}>0$, and $q_b=q_{\bar b}>0$, this gives
\[\tau(E,\theta) e^{i\vartheta(E,\theta)}=p_{a}(a(\theta)+ \bar{a}(\theta))
+iq_{b}  (b_E(\theta)+  \bar{b}_E(\theta)).\]
In view of \eqref{eq:ga44} and \eqref{eq:a+b}, it follows that
\[\tau(E,\theta) e^{i\vartheta(E,\theta)}=p_{a}(a(\theta)+ \bar{a}(\theta))(1+i),\]
so $\vartheta(E,\theta)=\pi/4$, which completes the proof of the openness of $\mathcal{E}(P,V_1,V_2)$.
\end{proof}
\appendix

\section{Examples of resonant energy levels for potentials outside  $\mathcal{SP}$}\label{sec:badpot}
The main purpose of this section is to show that option ($b$) in Theorem~\ref{thm:main2} can occur.
We construct a pair of potentials $V_1,V_2$ that are not in $\mathcal{SP}$ and for which a resonant energy level $E>0$ exists for certain polygons $P$. Then, by Theorem~\ref{thm:main2}, the set $\mathcal{E}(P,V_1,V_2)$ is a singleton.

In view of {Proposition~\ref{prop:twobad}}, if $E>0$ is a resonant energy level, then there exists a positive rational $\gamma$ such that
\begin{equation*}\label{eq:gaga4}
(a(\theta)+\bar{a}(\theta))=\gamma(b_E(\theta)+\bar{b}_E(\theta))\text{ for all  }\theta \in (0,E).
\end{equation*}
This condition is also sufficient to construct a rectilinear polygon $P$ for which $E\in\mathcal{E}(P,V_1,V_2)$. If we are not too ambitious, it is enough to take a sufficiently large rectangle as $P$, so that all orbits for the energy $E$ do not hit the sides of the rectangle. Then $P_{E,\theta}=[-\bar{a}(\theta),a(\theta)]\times[-\bar{b}_E(\theta),b_E(\theta)]$ and all its orbits are periodic for all $\theta\in(0,E)$, so $E\in\mathcal{E}(P,V_1,V_2)$. However, we can also construct more complicated vertically and horizontally symmetric rectilinear polygons $P$, for which $E$ is a resonant energy level. Then, we need to ensure that periodic orbits bouncing off the sides of $P$ also bounce off their symmetric counterparts, but that would require a more extensive discussion.

Another goal of this section is to show that the rationality of $\sqrt{V_2''(0)/V_1''(0)}$ is not a necessary condition for the existence of a resonant energy level $E>0$, as suggested by part ($c$) in Theorem~\ref{thm:main2} and part ($b$) in Theorem~\ref{thm:main1}.

\begin{proposition}
For every  $E>0$, there exists a pair of even non-quadratic $\mathcal{UM}$-potentials
$V_1,V_2:\R\to\R_{\geq 0}$ with $\deg(V_1,0)=\deg(V_2,0)=2$ such that $a(\theta)=b_E(\theta)$
for every $\theta\in[0,E]$. Moreover, $V_1$ and $V_2$ can be chosen so that $\sqrt{V_2''(0)/V_1''(0)}$ is irrational.
\end{proposition}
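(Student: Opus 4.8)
The plan is to translate the condition $a\equiv b_E$ on $[0,E]$ into a statement about the operator $A$ from \eqref{def:A} and then simply write down an explicit pair of potentials.

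First I would reduce as follows. For an even $\mathcal{UM}$-potential $V$ with $\deg(V,0)=2$ the bi-analytic root $V_*$ is odd, hence $W=V_*^{-1}$ is odd, $W'$ is even, and we may write $W'(x)=\phi(x^2)$ with $\phi\colon[0,+\infty)\to(0,+\infty)$ analytic. Then \eqref{eq:formofa} with $m=2$ reads $\sqrt2\,a(\theta)=\int_0^1\tfrac{\phi(\theta s^2)}{\sqrt{1-s^2}}\,ds=A(\phi)(\theta)$, and in particular (cf.\ \eqref{eq:limfin}) $a$ extends analytically to $\theta=0$. Applying the same identity with $\theta$ replaced by $E-\theta$ to a second potential $V_2$ gives $\sqrt2\,b_E(\theta)=A(\phi_2)(E-\theta)$, where $\phi_2(x)=W_2'(\sqrt x)$. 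Hence $a(\theta)=b_E(\theta)$ on $[0,E]$ is equivalent to $A(\phi_1)(\theta)=A(\phi_2)(E-\theta)$ on $[0,E]$; writing $g:=A(\phi_1)$, this says exactly $\phi_2=A^{-1}\bigl(g(E-\cdot)\bigr)$. Since $A(x^n)(\theta)=\bigl(\int_0^{\pi/2}\sin^{2n}t\,dt\bigr)\theta^n$ is a positive multiple of $\theta^n$, the operators $A,A^{-1}$ preserve polynomials, with $A^{-1}(1)=\tfrac2\pi$, $A^{-1}(\theta)=\tfrac4\pi x$, $A^{-1}(\theta^2)=\tfrac{16}{3\pi}x^2$.

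Next I would fix $c>0$ and $\varepsilon>0$ with $\varepsilon E^2<2c$, take $g(\theta)=c+\varepsilon\theta^2$, and apply $A^{-1}$ termwise:
\[
\phi_1(x)=A^{-1}(g)(x)=\tfrac2\pi\Bigl(c+\tfrac83\varepsilon x^2\Bigr),\qquad
\phi_2(x)=A^{-1}\bigl(c+\varepsilon(E-\cdot)^2\bigr)(x)=\tfrac2\pi\Bigl(c+\varepsilon E^2-4\varepsilon E x+\tfrac83\varepsilon x^2\Bigr).
\]
Both are positive on $[0,+\infty)$: for $\phi_2$ the bracketed quadratic has leading coefficient $\tfrac83\varepsilon>0$ and discriminant $\tfrac{16}{3}\varepsilon(\varepsilon E^2-2c)<0$. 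Now reconstruct the potentials: set $W_i(x):=\int_0^x\phi_i(t^2)\,dt$ (an odd polynomial with $W_i'=\phi_i(\cdot^2)>0$ everywhere, hence an increasing analytic bijection of $\R$), let $(V_i)_*:=W_i^{-1}$, and $V_i:=\bigl((V_i)_*\bigr)^2$. One checks directly that each $V_i$ is an even $\mathcal{UM}$-potential with $\deg(V_i,0)=2$ and $V_i''(0)=2/\phi_i(0)^2$, and that $V_i$ is not quadratic because $\phi_i$ is not constant. By construction $\sqrt2\,a(\theta)=A(\phi_1)(\theta)=g(\theta)$ and $\sqrt2\,b_E(\theta)=A(\phi_2)(E-\theta)=g\bigl(E-(E-\theta)\bigr)=g(\theta)$, so $a(\theta)=b_E(\theta)=\tfrac1{\sqrt2}(c+\varepsilon\theta^2)$ for all $\theta\in[0,E]$. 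Finally $\sqrt{V_2''(0)/V_1''(0)}=\phi_1(0)/\phi_2(0)=c/(c+\varepsilon E^2)$; choosing $\varepsilon E^2=(\sqrt2-1)c$ (which respects $\varepsilon E^2<2c$) makes this $1/\sqrt2$, irrational, while any admissible $\varepsilon$ already gives the first assertion.

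The hard part is essentially bookkeeping: the only point that genuinely needs care is the global positivity of $\phi_2$, since the relation $a=b_E$ only pins down $\phi_2=W_2'(\sqrt{\cdot})$ on $[0,E]$, whereas for $V_2$ to be a legitimate $\mathcal{UM}$-potential one needs $W_2'>0$ on all of $\R$ — and this is exactly what forces the inequality $\varepsilon E^2<2c$. Everything else (the $A^{-1}$ computations, the verification of the $\mathcal{UM}$ axioms, of $\deg(V_i,0)=2$, and of non-quadraticity) is routine. More generally one could take $g=c+\varepsilon h$ for any non-constant polynomial $h$ of even degree with positive leading coefficient and $\varepsilon$ small enough; the quadratic $h=\theta^2$ is simply the most economical choice.
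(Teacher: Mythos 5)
Your proof is correct and follows essentially the same route as the paper: build $W_1'$ and $W_2'$ as positive even polynomials whose $A$-images are related by the reflection $\theta\mapsto E-\theta$ (the paper does this with a general even-degree polynomial $P$ and a large additive constant $d$, you with the explicit quadratic $g=c+\varepsilon\theta^2$ and the smallness condition $\varepsilon E^2<2c$), then reconstruct the potentials and tune one parameter to make $W_1'(0)/W_2'(0)$ irrational. The computations ($A^{-1}$ on monomials, the discriminant bound for positivity of $\phi_2$, and $\sqrt{V_2''(0)/V_1''(0)}=\phi_1(0)/\phi_2(0)$) all check out.
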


\begin{proof}
For any $N\geq 1$, let $P(x)=\sum_{n=0}^{2N} a_nx^n$ be any  polynomial with $a_{2N}>0$. For every $n\geq 0$, let
\[c_{2n}:=\int_0^1\frac{s^{2n}}{\sqrt{1-s^2}}ds>0.\]
We denote by $Q(x)=Q_{P,{E}}(x)=\sum_{n=0}^{2N} b_nx^n$ the polynomial determined by
\[\sum_{n=0}^{2N} b_nc_{2n}x^n=\sum_{n=0}^{2N} a_nc_{2n}(E-x)^n;\]
that is,
\begin{equation*}
b_n=(-1)^n\sum_{n\leq k\leq 2N}\binom{k}{n}E^{k-n}\frac{c_{2k}}{c_{2n}}a_k\quad\text{for all}\quad 0\leq n\leq 2N.
\end{equation*}
In particular, $b_{2N}=a_{2N}>0$.
Since the polynomials $P$ and $Q$ have even degree, there exists $d\in\R$ such that $P(x)+d>0$ and $Q(x)+d>0$ for all $x\in\R$.
Let $W_1$ and $W_2$ be polynomials such that $W_1(0)=W_2(0)=0$ and $W'_1(x)=P(x^2)+d$, $W'_2(x)=Q(x^2)+d$. As $W'_1(x)>0$, $W'_2(x)>0$, $W'_1(x)=W'_1(-x)$, $W'_2(x)=W'_2(-x)$ for all $x\in\R$, both $W_1$ and $W_2$ are odd bi-analytic maps {(in fact, they are polynomials of degree $4N+1$)}. Then, $V_1^*:=W_1^{-1}$, $V_2^*:=W_2^{-1}$ are also odd bi-analytic maps. Hence, $V_1:= (V_1^*)^2$ and $V_2:= (V_2^*)^2$ are even $\mathcal{UM}$-potentials. Then, the corresponding maps $a$ and $b_E$ are of the form
\begin{align*}
a(\theta)&=\frac{1}{\sqrt{2}}\int_0^1\frac{W'_1(\sqrt{\theta}s)}{\sqrt{1-s^2}}ds=\frac{1}{\sqrt{2}}\int_0^1\frac{P(\theta s^2)+d}{\sqrt{1-s^2}}ds=\frac{1}{\sqrt{2}}(dc_0+\sum_{n=0}^{2N}a_nc_{2n}\theta^n)\\
b_E(\theta)&=\frac{1}{\sqrt{2}}\int_0^1\frac{W'_2(\sqrt{E-\theta}s)}{\sqrt{1-s^2}}ds=\frac{1}{\sqrt{2}}\int_0^1\frac{Q((E-\theta) s^2)+d}{\sqrt{1-s^2}}ds\\
&=\frac{1}{\sqrt{2}}(dc_0+\sum_{n=0}^{2N}b_nc_{2n}(E-\theta)^n).
\end{align*}
By the definition of $Q$, it follows that $a(\theta)=b_E(\theta)$ for every $\theta\in[0,E]$.

Now, additionally, suppose that the polynomial $P$ is such that $\sum_{n=1}^{2N}a_nc_{2n}E^n\neq 0$. Then $b_0\neq a_0$.
Notice that
\[
\sqrt{\frac{V_2''(0)}{V_1''(0)}}=\frac{W_1'(0)}{W_2'(0)}=\frac{P(0)+d}{Q(0)+d}=\frac{a_0+d}{b_0+d}.\]
As $a_0\neq b_0$, we can choose $d$ sufficiently large so that $\frac{a_0+d}{b_0+d}$ is irrational. This completes the construction.
\end{proof}

{
\begin{remark}\label{rmk:relat}
In the proof of the previous theorem, we can take the polynomial $P(x)=\sum_{n=0}^{2N} a_nx^n$ so that $N$ is even and
\[\sum_{n=0}^{2N} a_nc_{2n}x^n=S(x)S(E-x), \]
where $S$ is any polynomial of degree $N$. Then $Q=P$ and $V_2=V_1$ are $\mathcal{UM}$-potentials that do not belong to $\mathcal{SP}$ such that $a(\theta)=a(E-\theta)=b_E(\theta)$.
Therefore, in this case we can also find polygons for which the set of resonant levels is a singleton.
\end{remark}}

\begin{remark}
Modifying slightly the above procedure, we can easily construct a pair of non-even potentials $V_1$, $V_2$ which do not belong to $\mathcal{SP}$ and such that
$a(\theta)+\bar{a}(\theta)=b_E(\theta)+\bar{b}_E(\theta)$ for all $\theta\in[0,E]$. Now we choose any non-zero real $d_1, \bar{d}_1$ and then $d_0$ large enough
so that $P(x^2)+d_1x+d_0>0$ and $Q(x^2)+\bar{d}_1x+d_0>0$ for every $x\in\R$. Then, we repeat the construction taking the polynomials $W_1$, $W_2$
so that $W_1(0)=W_2(0)=0$, $W'_1(x)=P(x^2)+d_1x+d_0$ and $W'_2(x)=Q(x^2)+\bar{d}_1x+d_0$. As $W'_1$ and $W'_2$ are not even, the corresponding potentials $V_1$, $V_2$ are also not even. As $W'_1(x)+{W}'_1(-x)=2P(x^2)+2d_0$ and $W'_2(x)+{W}'_2(-x)=2Q(x^2)+2d_0$ are not constant,
both $V_1$ and $V_2$ are not $\mathcal{SP}$-potentials.
Moreover,
\begin{align*}
a(\theta)+\bar{a}(\theta)&=\frac{1}{\sqrt{2}}\int_0^1\frac{W'_1(\sqrt{\theta}s)+\bar{W}'_1(\sqrt{\theta}s)}{\sqrt{1-s^2}}ds\\
&=\frac{1}{\sqrt{2}}\int_0^1\frac{2P(\theta s^2)+2d_0}{\sqrt{1-s^2}}ds=\frac{2}{\sqrt{2}}(d_0c_0+\sum_{n=0}^{2N}a_nc_{2n}\theta^n)\\
b_E(\theta)+\bar{b}_E(\theta)&=\frac{1}{\sqrt{2}}\int_0^1\frac{W'_2(\sqrt{E-\theta}s)+\bar{W}'_2(\sqrt{E-\theta}s)}{\sqrt{1-s^2}}ds\\
&=\frac{2}{\sqrt{2}}\int_0^1\frac{Q((E-\theta) s^2)+d_0}{\sqrt{1-s^2}}ds
=\frac{2}{\sqrt{2}}(d_0c_0+\sum_{n=0}^{2N}b_nc_{2n}(E-\theta)^n).
\end{align*}
By the definition of $Q$, it follows that $a(\theta)+\bar{a}(\theta)=b_E(\theta)+\bar{b}_E(\theta)$  for every $\theta\in[0,E]$.
\end{remark}


\end{document}